\theoremstyle{definition}
\theoremstyle{plain}
\newtheorem*{theorem*}{Theorem}
\newtheorem{theorem}{Theorem}[section]
\newtheorem{definition}[theorem]{Definition}
\newtheorem{lemma}[theorem]{Lemma}
\newtheorem{corollary}[theorem]{Corollary}
\theoremstyle{remark}
\newtheorem*{remark}{Remark}
\newcommand{\R}{\mathbb{R}}
\newcommand{\Z}{\mathbb{Z}}
\newcommand{\Q}{\mathbb{Q}}
\newcounter{Chapcounter}
\newcommand{\chapter}[1] 
{ {\centering          
  \addtocounter{Chapcounter}{1} \Large \underline{\textbf{ \color{blue} Chapter \theChapcounter: ~#1}} }   
  \addcontentsline{toc}{section}{ \color{blue} Chapter:~\theChapcounter~~ #1}    
}
\title{Differentiability of Volumes and Equidistribution on Quasi-Projective Varieties}
\author{Debam Biswas}
\date{}
\begin{document}

\maketitle
\begin{abstract}
Differentiability of geometric and arithmetic volumes of Hermitian line-bundles leads to the proof of equidistribution results on projective varieties using the variational principle. In this article, we work in the setting of adelic divisors on quasi-projective varieties recently introduced by Xinyi Yuan and Shou-Wu Zhang to show that their geometric and arithmetic adelic volume functions are differentiable on the big cone. We introduce the notions of positive intersection products and show that the derivatives are realised as positive intersection products against integrable divisors at big points. We show an analogue of the Fujita Approximation for restricted volumes of adelic divisors in the geometric setting using a construction similar to those of positive intersections and as an application of our differentiability result we derive a quasi-projective analogue of the equidistribution theorem of Berman and Boucksom which generalises the equidistribution obtained by Yuan and Zhang for arithmetically nef adelic divisors on quasi-projective varieties.
\end{abstract}
\section*{Keywords:}
\textbf{2020 Mathematics Subject Classification :\ 14G40}
\section*{Notations}
\begin{itemize}
    \item By a \emph{variety} over a field, we always mean an integral separated finite-type scheme over the field.
    \item By an \emph{arithmetic variety} over a Dedekind domain, we mean a projective integral flat scheme over the spectrum over the Dedekind domain.
    \item We generally denote the \emph{arithmetic adelic divisors} over an essentially quasi-projective variety by $\overline{D}, \overline{E}$ etc and \emph{geometric adelic divisors} by $D,E$ etc.
    \item We denote \emph{arithmetic adelic volumes} by $\widehat{\text{vol}}(\cdot)$ and \emph{geometric adelic volumes} by $\text{vol}(\cdot)$.
    \item Given a function $f\colon \Q\rightarrow \R$, we say that $f$ is \emph{differentiable} at $t=0$ if 
    $f'(t):=\lim_{t\to 0}\frac{f(t)-f(0)}{t}$ exists and we say that the \emph{derivative} of $f$ at 0 is $f'(0)$.
    \item By a \emph{nef adelic divisor}, we always denote an \textbf{arithmetically} nef adelic divisor in the sense of \cite[Section 2.4]{yuan2021adelic}.
\end{itemize}
\section{Introduction}
Let $k$ be a field and $X$ a projective variety over $k$. If $L$ is a line bundle on $X$ then we define
\[\text{vol}(L):=\limsup_{m\to \infty}\frac{d!\cdot\text{dim}_k(H^0(X,L^{\otimes m}))}{m^d}\]
where $d=\text{dim}(X)$. We say $L$ is big if $\text{vol}(L)>0$. Boucksom, Favre and Jonsson showed that the function $\text{vol}(L)$ is differentiable on the big cone in \cite{Bouckdiv}. Lazarsfeld and Mustață independently showed the differentiability result by using Okounkov bodies in \cite{lazmus}. The differential of the volume function in a suitable direction involves the definition of \emph{positive intersection products} which were first introduced in \cite{positivoriginal}.\\
In Arakelov geometry, there is a similar notion of arithmetic volumes for Hermitian line bundles which measures the asymptotic growth of \emph{small global sections} of a given Hermitian line bundle. More precisely if $\mathcal{X}\rightarrow \text{Spec}(\mathbb{Z})$ is a projective arithmetic variety of dimension $d$ with generic fiber $X$ and let $\overline{\mathcal{L}}$ be a Hermitian line bundle with generic fiber $L$, then there is an induced Hermitian metric $|\cdot|$ on the complexified line bundle $L_{\mathbb{C}}:=L\otimes_{\mathbb{Q}}\mathbb{C}$. This metric induces a sup-norm $||\cdot||$ corresponding to every global section $H^0(X,L)$ and we define the space of \emph{small sections} as
\[\widehat{H}^0(\mathcal{X},\overline{\mathcal{L}}):=\{s\in H^0(X,L)\mid ||s||\le 1\}\ \text{and}\ \widehat{h}^0(\mathcal{X},\overline{\mathcal{L}}):=\text{log}\#\hat{H^0}(\mathcal{X},\overline{\mathcal{L}})\]
Then we can finally define the \emph{arithmetic volume} of $\overline{\mathcal{L}}$ as
\[\widehat{\text{vol}}(\overline{\mathcal{L}}):=\limsup_{m\to \infty}\frac{d!\cdot\widehat{h}^0(H^0(\mathcal{X},\overline{\mathcal{L}}^{\otimes m}))}{m^d}\]
We call $\overline{\mathcal{L}}$ to be \emph{big} if $\widehat{\text{vol}}(\overline{\mathcal{L}})>0$.
Analogously to the classical volume function in algebraic geometry, Chen showed that the arithmetic volume function is differentiable in the big cone. In \cite{chendiff}, he introduces an arithmetic analogue of the positive intersections in \cite{positivoriginal} and shows that the differentiability at the big point is given by the arithmetic positive intersection with a suitable direction analogously.\\ 
In their recent work \cite{yuan2021adelic}, Yuan and Zhang introduce the notion of \emph{adelic divisors} on a quasi-projective (arithmetic) variety $U$. Their idea is to consider all arithmetic divisors coming from some projective model of the fixed quasi-projective variety $U$ and put a \emph{boundary topology} on them which measures their growth along the boundary outside $U$. Then they define the \emph{adelic divisors} as those divisors which are \say{compactified} with respect to the boundary topology. More precisely if we fix a boundary divisor $\overline{D_0}:=(D_0,g_0)$ \emph{i.e} an arithmetic divisor $\overline{D_0}$ on some projective compactification $X_0$ of $U$ such that $\text{Supp}(D_0)=X_0\backslash U$, then an \emph{adelic divisor} is given by sequence $\{X_i,\overline{D_i}\}$ where $X_i$ are projective compactifications of $U$ and $\overline{D}_i$ are arithmetic divisors on $X_i$ satisfying a Cauchy condition \emph{i.e} there is a sequence of positive rationals $\{q_i\}$ converging to 0 such that 
\[-q_i\overline{D_0}\le \overline{D_j}-\overline{D_i}\le q_i\overline{D_0}\ \text{for all}\ j\ge i\]
where the effectivity relations are understood to hold after passing to a common projective model via birational pull-backs. The notion of adelic divisors are closely related to the notion of \emph{b-divisors} introduced in \cite{bdivprim} (see \cite{anabdivisor} for a nice review on b-divisors).\\ 
Yuan and Zhang also introduce the notions of volumes and arithmetic volumes $\widehat{\text{vol}}(\overline{D})$ for an adelic divisor $\overline{D}$ on a quasi-projective variety and show that they satisfy many properties of the classical volumes on projective varieties like continuity, log-concavity etc (see \cite[Chapter 5]{yuan2021adelic}. Following \cite{Bouckdiv} in the geometric case and \cite{chendiff} in the arithmetic case, we define \emph{positive intersection products} of adelic divisors $\overline{D}$ against integrable directions $\overline{E}$ denoted by $\langle\overline{D}^{d-1}\rangle\cdot \overline{E}$ (see Definition \ref{def:positnt} in section 3). Our first main result in this article is to show that these volume functions are differentiable. Suppose $k=(B,\Sigma)$ is a tuple where $B$ is a number ring or a smooth projective curve with fraction field $K$ and $\Sigma\subseteq \text{Hom}(K,\mathbb{C})$ and suppose $U$ is an essentially quasi-projective variety over $k$. Then in \ref{theorem:finaldiff} we show the following differentiability of adelic volumes
\begin{theorem*}[A]
        Suppose $\overline{D}$ is a big adelic divisor and $\overline{E}$ is an integrable adelic divisor on a normal essentially quasi-projective variety $U$ over $k$ of dimension $d$. Then the function $t\mapsto \widehat{\emph{vol}}(\overline{D}+t\overline{E})$ is differentiable at $t=0$ with derivative given by 
        \[\frac{d}{dt}\widehat{\emph{vol}}(\overline{D}+t\overline{E})\mid_{t=0}=d\cdot\langle\overline{D}^d\rangle\cdot\overline{E}\]
\end{theorem*}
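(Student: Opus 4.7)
The plan is to adapt Chen's proof of arithmetic differentiability on projective arithmetic varieties \cite{chendiff} to the adelic setting of Yuan-Zhang, making systematic use of the Cauchy topology on adelic divisors and the positive intersection framework developed in Section 3. The central inputs---assumed proven earlier alongside the definition of $\langle\overline{D}^{d-1}\rangle\cdot\overline{E}$---are: (i) an adelic Fujita approximation producing, for each $\epsilon>0$, an ample adelic divisor $\overline{A}\leq\overline{D}$ arising from some projective model with $\overline{A}^d\geq\widehat{\text{vol}}(\overline{D})-\epsilon$, equivalently the orthogonality identity $\widehat{\text{vol}}(\overline{D})=\langle\overline{D}^d\rangle$; (ii) multilinearity of arithmetic intersection on integrable adelic classes, together with the arithmetic Hilbert-Samuel identity $\widehat{\text{vol}}(\overline{A})=\overline{A}^d$ for nef adelic $\overline{A}$; and (iii) log-concavity of $\widehat{\text{vol}}$ from \cite[Chapter 5]{yuan2021adelic}.

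Since $\overline{E}$ is integrable, I write $\overline{E}=\overline{E_1}-\overline{E_2}$ with $\overline{E_i}$ nef and use linearity of both sides of the claimed identity in $\overline{E}$ (on integrable classes) to reduce to the case $\overline{E}$ nef. Openness of the big cone makes $f(t):=\widehat{\text{vol}}(\overline{D}+t\overline{E})$ finite and positive in a neighborhood of $0$, and log-concavity renders $f^{1/d}$ concave there, so the one-sided derivatives $f'_+(0)\leq f'_-(0)$ exist. Setting $L:=\langle\overline{D}^{d-1}\rangle\cdot\overline{E}$, the task reduces to proving the two asymmetric bounds $f'_+(0)\geq dL$ and $f'_-(0)\leq dL$, after which concavity squeezes both to $dL$.

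For the lower bound $f'_+(0)\geq dL$: given $\epsilon>0$, pick an ample adelic approximation $\overline{A}\leq\overline{D}$ with $\overline{A}^d\geq\widehat{\text{vol}}(\overline{D})-\epsilon$ and $\overline{A}^{d-1}\cdot\overline{E}\geq L-\epsilon$ (feasible since $L$ is a supremum of such intersections, using a diagonal choice along the Fujita sequence). For $t\geq 0$ and $\overline{E}$ nef, $\overline{A}+t\overline{E}$ is ample with $\overline{A}+t\overline{E}\leq\overline{D}+t\overline{E}$, so monotonicity of volume combined with Hilbert-Samuel gives $f(t)\geq(\overline{A}+t\overline{E})^d=\overline{A}^d+dt\,\overline{A}^{d-1}\cdot\overline{E}+O(t^2)$, the error controlled uniformly by intersection numbers of $\overline{D}$ via monotonicity on nef classes. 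Letting $\epsilon=t^2$ so that $\overline{A}=\overline{A}_t$ improves with $t$ and rearranging yields $f'_+(0)\geq dL$.

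The main obstacle is the matching upper bound $f'_-(0)\leq dL$. Symmetrically, choose an ample $\overline{A}_t\leq\overline{D}$ with $\overline{A}_t^d\geq\widehat{\text{vol}}(\overline{D})-t^2$; the delicate technical point is that $\overline{A}_t-t\overline{E}$ must remain nef for small $t>0$, which requires the ampleness margin of $\overline{A}_t$ to dominate $t$ times a suitable norm of $\overline{E}$ uniformly as the Fujita sequence approaches the boundary of the nef cone. Granting this (to be secured through the Cauchy structure in Section 3), $\widehat{\text{vol}}(\overline{D}-t\overline{E})\geq(\overline{A}_t-t\overline{E})^d=\overline{A}_t^d-dt\,\overline{A}_t^{d-1}\cdot\overline{E}+O(t^2)$, which rearranges to $[\widehat{\text{vol}}(\overline{D})-\widehat{\text{vol}}(\overline{D}-t\overline{E})]/t\leq t+d\,\overline{A}_t^{d-1}\cdot\overline{E}+O(t)$. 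Since $\overline{A}_t^{d-1}\cdot\overline{E}\leq L$ by definition of $L$ as a supremum, passing to $t\to 0^+$ gives $f'_-(0)=\lim_{t\to 0^+}[\widehat{\text{vol}}(\overline{D})-\widehat{\text{vol}}(\overline{D}-t\overline{E})]/t\leq dL$, and the two bounds combine to finish the proof.
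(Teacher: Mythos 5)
Your proposal correctly identifies the high-level architecture (Fujita approximation, log-concavity, positive intersections) but has two gaps that the paper's proof carefully avoids, and the second is fatal.

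First, the reduction to $\overline{E}$ nef via ``linearity of both sides'' does not work: the positive intersection product is linear in $\overline{E}$ by Lemma \ref{lemma:positlinear}, but the directional derivative of $\widehat{\text{vol}}$ is not known to be linear in $\overline{E}$ until you have already proven differentiability. More concretely, proving that $t\mapsto\widehat{\text{vol}}(\overline{D}+t\overline{E}_1)$ and $t\mapsto\widehat{\text{vol}}(\overline{D}-t\overline{E}_2)$ are separately differentiable at $0$ does not imply differentiability of $t\mapsto\widehat{\text{vol}}(\overline{D}+t\overline{E}_1-t\overline{E}_2)$; you are restricting a function of two variables to the diagonal without control over joint behavior. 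The paper never performs this reduction: it keeps $\overline{E}=\overline{E}_1-\overline{E}_2$ intact and gets a single two-sided estimate.

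Second, and more seriously, your upper bound $f'_-(0)\leq dL$ hinges on $\overline{A}_t-t\overline{E}$ remaining nef, which you flag as ``to be secured.'' It cannot be secured: for a big but non-nef $\overline{D}$, the Fujita approximations $\overline{A}_t$ approach the boundary of the nef cone as $\overline{A}_t^d\to\widehat{\text{vol}}(\overline{D})$, and there is no a priori reason the ampleness margin of $\overline{A}_t$ decays no faster than linearly in $t$. The paper sidesteps this entirely by never needing nefness of any difference. Its key estimate (Lemma \ref{lemma:mannewlemm}) applies Siu's inequality to $\overline{A}+t\overline{E}_1$ and $t\overline{E}_2$ separately --- both nef --- to bound $\widehat{\text{vol}}(\overline{A}+t\overline{E})$ from below with a uniformly controlled $O(t^2)$ error, yielding the one-sided inequality
\[
\widehat{\text{vol}}(\overline{D}+t\overline{E})-\widehat{\text{vol}}(\overline{D})\geq d\,\langle\overline{D}^{d-1}\rangle\cdot\overline{E}\,t-Ct^2
\]
for integrable $\overline{E}$. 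The matching upper bound is then obtained not by a symmetric Fujita argument but by the flip $\overline{D}\leftrightarrow\overline{D}\pm t\overline{E}$, $\overline{E}\leftrightarrow-\overline{E}$ in the same inequality, after which continuity of the positive intersection product (Lemma \ref{lemma:positcont}) squeezes the two sides together. If you replace your upper-bound step by this flip trick and keep $\overline{E}$ integrable (not reduced to nef), your argument becomes essentially the paper's proof.
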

The arguments to deduce the differentiability of both geometric and arithmetic adelic volumes are very similar to the ones used by Boucksom, Favre and Jonsson in \cite{Bouckdiv} to deduce differentiability of geometric volumes (see the proof of \ref{theorem:finaldiff}). The crucial point is using the continuity of positive intersection products and adelic volumes to obtain a limit version of the bounds in \cite{Bouckdiv} while perturbing in small directions of the boundary divisor. We also use Fujita approximation \cite[Theorem 5.2.8]{yuan2021adelic} and Siu's inequality \cite[Theorem 5.2.2(2)]{yuan2021adelic} for adelic volumes.\\
Given a closed sub-variety of a projective variety, there is an invariant called \emph{asymptotic intersection number} defined along it for a line bundle in \cite{asympint}. The definition of this intersection number is closely related to the definition of positive intersection products. In \cite{asympint} it is showed that these intersection numbers are related to the \emph{restricted volumes} of the line bundle along the closed sub-variety provided that the closed sub-variety is \say{general enough} with respect to the bundle. Given a closed subvariety $E$ of a quasi-projective variety and a (geometric) adelic divisor $\overline{D}$ on $U$, we have introduced the notions of the restricted volumes denoted by $\widehat{\text{vol}}_{U|E}(\overline{D})$  and \emph{augmented base locus} of $\overline{D}$ denoted by $B_+(\overline{D})$ in \cite{biswas2023convex}. In this article we introduce an invariant similar to the asymptotic intersection numbers for adelic divisors which we denote by $\langle\overline{D}^d\rangle\cdot E$ and show that they are equal to the restricted volume when the closed sub-variety is not contained in the augmented base locus. This allows us to obtain a version of Fujita approximation for restricted volumes. Now suppose $k$ is a field and $U$ is a quasi-projective variety over $k$. In Theorem \ref{theorem:useless}, we show the following
\begin{theorem*}[B]
    Suppose $D$ is a big adelic divisor and suppose $E$ is a closed sub-variety of $U$ such that $E\nsubseteq B_+(D)$. Then we have 
    \[\emph{vol}_{U|E}(D)=\langle D^{d-1}\rangle\cdot E\]
    where $d=\emph{dim}(U)$
\end{theorem*}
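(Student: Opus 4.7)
The plan is to establish the two inequalities separately, adapting the strategy of Ein, Lazarsfeld, Mustaţă, Nakamaye and Popa from \cite{asympint} to the quasi-projective adelic setting. I would interpret $\langle\overline{D}^d\rangle\cdot E$ as a supremum, taken over birational models $\pi\colon X'\to X$ of compactifications of $U$ together with Kodaira-type decompositions $\pi^*\overline{D}\equiv \overline{A}+\overline{N}$ (with $\overline{A}$ nef and $\overline{N}$ effective), of the intersection number $\overline{A}^{\dim E}$ against the strict transform $E'$ of $E$. The hypothesis $E\not\subseteq B_+(\overline{D})$ is precisely what makes the subclass of such decompositions with $E'\not\subseteq \text{Supp}(\overline{N})$ into a cofinal family.

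For the inequality $\langle\overline{D}^d\rangle\cdot E\le\widehat{\emph{vol}}_{U|E}(\overline{D})$, I would fix such a Kodaira decomposition. Multiplication by the canonical section of $m\overline{N}$ embeds $H^0(X',m\overline{A})$ into $H^0(X',m\pi^*\overline{D})$, and since $E'\not\subseteq \text{Supp}(\overline{N})$ the image consists of sections whose restriction to $E'$ is nonzero. Together with the asymptotic Riemann--Roch estimate for nef adelic divisors restricted to $E'$, this bounds the intersection number by the restricted volume, and taking the supremum over all such decompositions gives the desired inequality.

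The reverse inequality $\widehat{\emph{vol}}_{U|E}(\overline{D})\le \langle\overline{D}^d\rangle\cdot E$ is the essential content of the theorem and requires a Fujita-type approximation for the restricted volume: for each $\epsilon>0$, one must produce a model $\pi$ and a Kodaira decomposition $\pi^*\overline{D}\equiv \overline{A}+\overline{N}$ with $E'\not\subseteq \text{Supp}(\overline{N})$ and intersection number within $\epsilon$ of $\widehat{\emph{vol}}_{U|E}(\overline{D})$. I would construct this by combining the Fujita approximation for ordinary adelic volumes \cite[Theorem 5.2.8]{yuan2021adelic} with the restricted Okounkov body construction from \cite{biswas2023convex}, exhibiting the restricted volume as a normalized limit of convex bodies coming from the subsystems of sections of $m\pi^*\overline{D}$ that do not vanish identically on $E'$; the complete linear series associated with each such subsystem produces a Kodaira decomposition of the required form, and the limiting procedure transfers the asymptotic estimate on section counts to an asymptotic estimate on intersection numbers.

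The principal obstacle will be ensuring that refinement of the model along $E$ remains compatible with the boundary topology defining the adelic structure. Concretely, as one refines $\pi$ to improve $\overline{A}^{\dim E}\cdot E'$, the resulting nef parts must assemble into genuinely integrable adelic divisors on $U$ — not merely model-theoretic divisors on individual compactifications — so that their intersection numbers converge to the invariant $\langle\overline{D}^d\rangle\cdot E$ as defined globally. The hypothesis $E\not\subseteq B_+(\overline{D})$ plays the dual role of producing the cofinal family of decompositions with $E'\not\subseteq \text{Supp}(\overline{N})$ and of ensuring that the asymptotic intersection number on $E'$ genuinely captures the growth of the restricted volume in the limit, rather than losing information to the base locus.
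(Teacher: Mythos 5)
Your approach is genuinely different from the paper's, and I think the difference is the crux: you are attempting to re-prove the Ein--Lazarsfeld--Musta\c{t}\u{a}--Nakamaye--Popa theorem directly in the adelic setting, whereas the paper \emph{reduces} to the projective ELMNP theorem term by term along a Cauchy sequence of models and then lets continuity do the work. Concretely, the paper's proof is just three ingredients spliced together: (i) Lemma \ref{lemma:metaconv} shows $\langle\overline{D}^d\rangle\cdot E = \lim_i \|(D_i+q_iD_0)^d\cdot\overline{E}_i\|$, where the key input is the base-locus monotonicity $B_+(\overline{D}_i+q_i\overline{D}_0)\subseteq B_+(\overline{D})$, so that $\overline{E}_i\not\subseteq \overline{B_+}(D_i+q_iD_0)$ is automatic once $E\not\subseteq B_+(\overline{D})$; (ii) \cite[Corollary 2.17]{biswas2023convex} gives $\widehat{\text{vol}}_{U|E}(\overline{D})=\lim_i \text{vol}_{X_i|\overline{E}_i}(D_i+q_iD_0)$; (iii) \cite[Theorem 2.13]{asympint}, applied on each projective model $X_i$, equates the two model quantities. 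The two limits are then identical. No new cohomological input is needed.

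Your proposal, by contrast, asks for an adelic Fujita approximation for restricted volumes built out of restricted Okounkov bodies and \say{the subsystems of sections that do not vanish identically on $E'$.} That is exactly the technically heavy part of the ELMNP argument — the estimates on separation of jets and the passage from section counts to intersection numbers — and the paper's closing remark in this section points out that precisely these cohomological methods are not known to transfer to the quasi-projective adelic setting. You flag the obstruction yourself (\say{ensuring that refinement of the model along $E$ remains compatible with the boundary topology}) but do not resolve it, and this is not a technicality: without the continuity lemmas (Lemma \ref{lemma:asympcont}, Lemma \ref{lemma:metaconv}) there is no mechanism to pass from estimates on a single compactification to the adelic invariant, since the $E$-admissible approximations in Definition \ref{def:asymposit} run over modifications of $U$ that need not be coherent with the Cauchy sequence of models defining $\overline{D}$. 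Your easy inequality (multiplication by the section of $m\overline{N}$, restriction to $E'$) is sound in outline and matches the easy direction of the projective theorem, but the hard inequality as written is a sketch of a program rather than a proof. The paper's strategy is genuinely more elementary: it trades a new hard theorem for three continuity statements and one citation.
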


Next as an application of our differentiability results we consider the variational principle useful in proving equidistribution conjectures. We work uniformly over the base $K$ which can either be a number field or a function field of one variable. The differentiability of both the geometric and arithmetic adelic volumes allows us to mimick the arguments in \cite[Section 5.2]{chendiff} to obtain an equdistribution for big arithmetic adelic divisors, Suppose $U$ is now a quasi-projective variety over $K=\text{Frac}(B)$ where $B$ is either the ring of integers of a number field or a smooth projective curve. We fix a place $v$ on $K$ and we denote by $U_v$ the $K_v$-analytic space associated to $U$ where $K_v$ is the completion of $K$ at $v$.  In Theorem \ref{theorem:equidistribution}, we show the following
\begin{theorem*}[C]
    Suppose $U$ is a quasi-projective variety over $K$ of dimension $d§$ and suppose $\overline{D}$ is a big arithmetic adelic divisor in $\widehat{\emph{Div}}(U,k)$ with generic fiber $D$. Furthermore suppose $\{x_m\}$ is a generic sequence of geometric points in $U(\overline{K})$ which is small with respect to $\overline{D}$. Then for any place $v$ on $K$ and for any $g\in C_c^0(U_v)$, we have
     \[\lim_{m\to\infty}\int_{U_v}g\ d\eta_{x_m}=\frac{\langle\overline{D}^d\rangle\cdot\mathcal{O}(g)}{\emph{vol}(D)}\]
     In particular, the sequence of Radon measures $\{\eta_{x_m}\}$ converge weakly to the Radon measure given by 
     \[g\in C_c^0(U_v)\mapsto \frac{\langle\overline{D}^d\rangle\cdot\mathcal{O}(g)}{\emph{vol}(D)}\]
\end{theorem*}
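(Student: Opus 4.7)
The plan is to deduce this equidistribution via the variational principle of Szpiro--Ullmo--Zhang, as refined by Yuan and by Berman--Boucksom, now applicable in the adelic quasi-projective setting through Theorem~A in its arithmetic form. Fix $g \in C_c^0(U_v)$ and consider the one-parameter family $\overline{\mathcal{D}}(t) := \overline{\mathcal{D}} + t\mathcal{O}(g)$. Since bigness is an open condition in the adelic cone (by continuity of $\widehat{\text{vol}}$, \cite[Chapter~5]{yuan2021adelic}), $\overline{\mathcal{D}}(t)$ remains big for $t$ in a neighbourhood of $0$, and Theorem~A gives
\[
f(t) := \widehat{\text{vol}}(\overline{\mathcal{D}}(t)) = \widehat{\text{vol}}(\overline{\mathcal{D}}) + t(d+1)\langle \overline{\mathcal{D}}^d \rangle \cdot \mathcal{O}(g) + o(t).
\]
Because $\mathcal{O}(g)$ is vertical (trivial underlying divisor, Green datum only at $v$), the height of any algebraic point transforms linearly:
\[
h_{\overline{\mathcal{D}}(t)}(x_m) = h_{\overline{\mathcal{D}}}(x_m) + t \int_{U_v} g\, d\eta_{x_m}.
\]

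Next, I would combine this with an essential-minimum inequality: for any big arithmetic adelic divisor $\overline{\mathcal{E}}$, the essential minimum $\zeta(\overline{\mathcal{E}})$ admits a Zhang-type lower bound involving $\widehat{\text{vol}}(\overline{\mathcal{E}})$, which should be derivable from Fujita approximation \cite[Theorem~5.2.8]{yuan2021adelic} together with the positive-intersection construction of Section~3. Genericity of $\{x_m\}$ then forces $\liminf_m h_{\overline{\mathcal{E}}}(x_m) \geq \zeta(\overline{\mathcal{E}})$, while the smallness hypothesis pins $\lim_m h_{\overline{\mathcal{D}}}(x_m)$ to the corresponding lower bound at $t=0$. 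Applying this inequality to $\overline{\mathcal{D}}(t)$ for $t>0$, subtracting its value at $t=0$, dividing by $t$, and letting $t \to 0^+$ turns the left side into a liminf of the integrals against $\eta_{x_m}$, while the right side tends to $f'(0)$ normalized by the denominator that the smallness condition fixes, yielding
\[
\liminf_m \int_{U_v} g\, d\eta_{x_m} \;\geq\; \frac{\langle \overline{\mathcal{D}}^d \rangle \cdot \mathcal{O}(g)}{\widehat{\text{vol}}(\overline{\mathcal{D}})}.
\]
Applying the same reasoning to $-g$ (equivalently, $t<0$) yields the matching upper bound for $\limsup$, so the integrals converge; weak convergence of the Radon measures $\eta_{x_m}$ then follows by a standard density argument in $C_c^0(U_v)$.

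The principal obstacle will be the essential-minimum step: establishing the adelic Zhang-type lower bound with a normalization tight enough that the smallness hypothesis at $t=0$ forces equality, which is what lets the variational argument close cleanly. In the classical projective arithmetic setting this is the content of the arithmetic Hilbert--Samuel theorem; in the quasi-projective adelic setting, one must instead leverage Fujita approximation for arithmetic adelic volumes \cite[Theorem~5.2.8]{yuan2021adelic} and the continuity of the positive intersection products $\langle \overline{\mathcal{D}}^d \rangle$ developed earlier in the paper. A secondary but nontrivial technical point is to verify that the vertical perturbation $\mathcal{O}(g)$ contributes exactly $\int g\, d\eta_{x_m}$ to the height at $v$, which should follow by directly unpacking the Yuan--Zhang height pairing at a single place. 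Once these ingredients are in place, the variational argument proceeds essentially verbatim from \cite[Section~5.2]{chendiff}, as the text preceding the theorem already suggests.
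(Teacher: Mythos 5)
Your outline is the same variational argument the paper uses, but it is presented as if the key normalization and the integrability of the perturbation can be taken for granted, and in both places you go wrong.

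First, the crucial point you flag as your ``principal obstacle'' is not an obstacle at all, but you have misidentified what it says. The lower bound you need is not a bound on $\liminf_m h_{\overline{\mathcal{E}}}(x_m)$ in terms of $\widehat{\mathrm{vol}}(\overline{\mathcal{E}})$ alone: it is the adelic Zhang inequality $\zeta_{\mathrm{ess}}(\overline{\mathcal{E}}) \ge \hat{\mu}_+^{\pi}(\overline{\mathcal{E}}) = \frac{\widehat{\mathrm{vol}}(\overline{\mathcal{E}})}{(d+1)\widehat{\mathrm{vol}}(\overline{E})}$, which already exists as \cite[Lemma 5.3.4]{yuan2021adelic} and whose right-hand side is normalized by the \emph{geometric} volume of the generic fiber. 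Genericity gives $\phi_{\{x\}} \ge \zeta_{\mathrm{ess}} \ge \hat{\mu}_+^{\pi}$, smallness gives equality at $\overline{\mathcal{D}}$, and the derivative of $\hat{\mu}_+^{\pi}$ at $\overline{\mathcal{D}}$ in the direction $\mathcal{O}(g)$ (computed via Corollary \ref{coroll:asympslope}, using that the generic fiber of $\mathcal{O}(g)$ is trivial) is $\langle\overline{\mathcal{D}}^d\rangle\cdot\mathcal{O}(g)/\widehat{\mathrm{vol}}(\overline{D})$. Your final display instead shows the arithmetic volume $\widehat{\mathrm{vol}}(\overline{\mathcal{D}})$ in the denominator, which signals that you are not tracking the normalization; with that normalization the smallness hypothesis $\lim_m h_{\overline{\mathcal{D}}}(x_m) = \hat{\mu}_+^{\pi}(\overline{\mathcal{D}})$ would not pin the $t=0$ value to the lower bound and the difference quotient would not close. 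The paper encapsulates this step in the abstract super-additivity lemma \cite[Prop 5.4]{chendiff} (Lemma \ref{theorem:auxillchen}), but the unrolled version you sketch is equivalent once the correct $t$-function, namely $\hat{\mu}_+^{\pi}$, is used.

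Second, you apply Theorem A to $\overline{\mathcal{D}} + t\mathcal{O}(g)$ for arbitrary $g \in C_c^0(U_v)$, but Theorem A requires the perturbation direction to be integrable, and $\mathcal{O}(g)$ is generally only in $\widehat{\mathrm{Div}}(U,k)$ (via the compactified Green functions of \cite[Theorem 3.6.4]{yuan2021adelic}), not in $\widehat{\mathrm{Div}}(U,k)_{\mathrm{int}}$. The paper first reduces to $g$ a model function, where integrability is proved in Lemma \ref{lemma:modelintegrable}, and then extends to general $g$ by uniform approximation using Lemma \ref{lemma:positextension} and Lemma \ref{lemma:control} to control the positive intersection against $\mathcal{O}(r)$. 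Without this reduction the variational step is not licensed. These two points are what separates your sketch from a proof; the remaining assembly (linearity of the height in $t$, the $\pm g$ symmetry, weak convergence) is as you describe.
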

We go on to show that this strengthens the equidistribution theorem for quasi-projective varieties proved by Yuan and Zhang in \cite[Theorem 5.4.3]{yuan2021adelic}. The above theorem can be thought of as an analogue of the equidistribution theorem obtained by Berman and Boucksom in \cite{BermanBoucksom} in the quasi-projective case where we require an extra positivity assumption for the arithmetic adelic divisor to be \textbf{arithmetically} big.\\
The article is organised as follows. In section 2, we start with a brief review of the adelic divisors of Yuan and Zhang following \cite{yuan2021adelic} which will be necessary for our article. In section 3, we start by defining positive intersection products for adelic divisors. We go on to obtain properties of this product and in the end obtain the differentiability result. In section 4 we define \emph{positive intersection products} along a closed sub-variety and relate them with restricted volumes under suitable hypothesis on the closed sub-variety. In section 5, we obtain a fundamental inequality over function fields using the theroy of adelic curves. In section 6, we deduce the equidistribution result as an application of differentiability of geometric and arithmetic adelic volumes and finally in section 7 we relate our equidistribution with two other equidistribution results in the literature.
\section*{Acknowledgements}
The author would like to thank Sébastien Boucksom, Huyai Chen, Walter Gubler, Roberto Gualdi and Antoine Sedillot for  fruitful discussions and comments during the preparation of the article. The author would like to thank François Ballaÿ for pointing out a relation to the equidistribution result of Berman and Boucksom and Xinyi Yuan for pointing towards further generalisations. The author was supported by SFB Higher Invariants during the preparation of this work.
\section{Review of Adelic Divisors}
In this section, we recall the definitions of adelic divisors on quasi-projective varieties over Dedekind bases. We follow the treatment along \cite[Section 2.7]{yuan2021adelic}. We begin by considering a tuple $k:=(B,\Sigma)$ where $B$ is a Dedekind scheme \emph{i.e} a Noetherian integral scheme of dimension 1 with function field $K$ and $\Sigma\subseteq\text{Hom}(\Sigma,\mathbb{C})$. Note that here we allow $\Sigma$ to be the empty set which is typically the case when $B$ is a scheme over a field of positive characteristic. Note that every $\sigma\in\Sigma$ induces an Archimedean valuation $|\cdot|_{\sigma}$ of $\mathbb{C}$ and we have $|\cdot|_{\sigma}=|\cdot|_{\sigma'}$ if and only if $\sigma'=c\circ\sigma$ where $c$ is the complex conugation on $\mathbb{C}$. We further impose that when our scheme $B=\text{Spec}(F)$ for some field $F$ or $B$ is a smooth projective curve, we set that $\Sigma$ is empty, even if $F$ has characteristic 0. We do this to include the geometric case in our uniform terminology. By a variety over $k$ we mean a flat, integral, separated, finite-type scheme $\mathcal{X}$ over $B$. Furthermore we say it is quasi-projective (projective) if $\mathcal{X}$ is quasi-projective (projective) over $B$. For a quasi-projective variety $\mathcal{U}$ over $k$, a \emph{projective model} of $\mathcal{U}$ means a projective variety $\mathcal{X}$ over $k$ together with an open immersion $\mathcal{U}\xhookrightarrow{}\mathcal{X}$.\\
We want to include a slightly more general class of varieties that we want to work with. In order to do so, we recall the notion of \emph{pro-open immersions}. A morphism between integral schemes is said to be \emph{pro-open} if the underlying map of topological spaces is injective and it induces and isomorphism of local rings at each point (see \cite[Section 2.3.1]{yuan2021adelic}). By an \emph{essentially quasi-projective variety} over $k$, we mean an integral, flat, finite scheme $X$ over $B$ together with a projective variety $\mathcal{X}$ over $k$ such that there is a pro-open immersion $X\xhookrightarrow{}\mathcal{X}$. By a \emph{(quasi)projective} model of $X$ over $k$, we mean a (quasi)-projective variety $\mathcal{X}$ over $k$ together with a pro-open immersion $X\xhookrightarrow{} \mathcal{X}$.
Suppose $\mathcal{X}$ is a projective arithmetic variety over $k$. Then we set $\mathcal{X}_{\Sigma}:=\coprod_{\sigma\in\Sigma} \mathcal{X}_{\sigma}$ where $\mathcal{X}_{\sigma}$ denotes the base change of $\mathcal{X}$ to $\mathbb{C}$ via $\sigma$. By an \emph{arithmetic divisor} over $\mathcal{X}$, we mean a tuple $(\mathcal{D},g_{\mathcal{D}})$ where $\mathcal{D}$ is a divisor and $g_{\mathcal{D}}$ is a Green function $\mathcal{X}_{\Sigma}(\mathbb{C})\backslash |\mathcal{D}(\mathbb{C})|\rightarrow \mathbb{R}$ which is invariant under the action $F_{\infty}$ of complex conjugation. This means that if $\overline{\sigma}=c\circ \sigma$, then we require that $g_{\mathcal{D},\overline{\sigma}}=g_{\overline{\mathcal{D}},\sigma}$ where $g_{\overline{\mathcal{D}},\sigma}$ denotes the induced Green's function on the factor $\mathcal{X}_{\sigma}$. We denote the group of arithmetic divisors on $\mathcal{X}$ by $\widehat{\text{Div}}(\mathcal{X})$. For further notions we refer to \cite[Section 2.7]{yuan2021adelic}.\\
Given an open subscheme of $\mathcal{X}$, we denote the \emph{objects of mixed coefficients} as \[\widehat{\text{Div}}(\mathcal{X},\mathcal{U})=\widehat{\text{Div}}(\mathcal{X})_{\mathbb{Q}}\oplus_{\text{Div}(\mathcal{U})_{\mathbb{Q}}}\text{Div}(\mathcal{U})\]
Given a fixed quasi-projective arithmetic variety $\mathcal{U}$ on $k$, we can define the group of \emph{model adelic divisors} as 
\[\widehat{\text{Div}}(\mathcal{U})_{\text{mod}}:=\lim_{\mathcal{X}}\widehat{\text{Div}}(\mathcal{X},\mathcal{U})\]
where we take the filtered colimit by varying $\mathcal{X}$ across all projective models of $\mathcal{U}$ and viewing the objects of mixed coefficients as filtered system via bi-rational pull-backs. We can extend the notions of effectivity to the group of adelic divisors by passing to filtered colimits. By a \emph{boundary divisor} on $\mathcal{U}$, we mean a model divisor $\overline{\mathcal{D}}_0\in\widehat{\text{Div}}(\mathcal{X},\mathcal{U})$ such that $\text{Supp}(\mathcal{D}_0)=\mathcal{X}\backslash \mathcal{U}$. Given the choice of such a boundary divisor we can endow the group of model divisors by a \emph{boundary norm} as 
\[||\cdot||_{\overline{\mathcal{D}}_0}:\widehat{\text{Div}}(\mathcal{U})_{\text{mod}}\rightarrow \mathbb{R}\cup\{\infty\}\]
\[||\mathcal{D}||_{\overline{\mathcal{D}}_0}:=\inf\{q\in \mathbb{Q}\mid -q\overline{\mathcal{D}}_0\le \overline{\mathcal{D}}\le q\overline{\mathcal{D}}_0\}\]
We can define the group \emph{adelic divisors} on $\mathcal{U}$, denoted by $\widehat{\text{Div}}(\mathcal{U},k)$, as the Cauchy completion of $\widehat{\text{Div}}(\mathcal{U})_{\text{mod}}$ with respect to the topology induced by $||\cdot||_{\overline{\mathcal{D}}_0}$ for some boundary divisor $\overline{\mathcal{D}}_0$. Finally given an essentially quasi-projective variety $U$ over $k$, we define the group of adelic divisors on $U$, denoted by $\widehat{\text{Div}}(U,k)$ as
\[\widehat{\text{Div}}(U,k):=\lim_{\mathcal{U}}\widehat{\text{Div}}(\mathcal{U},k)\]
where we take the filtered colimit by varying $\mathcal{U}$ across all quasi-projective models of $U$ over $k$ and using birational pull-backs as transition maps. We further have the notions of \emph{strongly nef}, \emph{nef} and \emph{integrable} adelic divisors denoted by $\widehat{\text{Div}}(U,k)_{\text{snef}}$, $\widehat{\text{Div}}(U,k)_{\text{nef}}$ and $\widehat{\text{Div}}(U,k)_{\text{int}}$ respectively and we refer to \cite[Section 2.4]{yuan2021adelic} for further definitions and details.\\
Note that the notions of effectivity of divisors on $\widehat{\text{Div}}(\mathcal{U})_{\text{mod}}$ induce notions of effectivity on $\widehat{\text{Div}}(\mathcal{U},k)$ and $\widehat{\text{Div}}(U,k)$ by passing to completions and filtered colimits. Given $\overline{D}\in\widehat{\text{Div}}(U,k)$, Yuan and Zhang introduce the notion of \emph{small sections} as
\[H^0(U,\overline{D}):=\{f\in k(U)^{\times}\mid \widehat{\text{div}}(f)+\overline{D}\ge 0\}\]
where $k(U)$ denotes the function field of $U$. They are further able to show that these spaces are finite dimensional in the geometric case and are of finite cardinality in the arithmetic case (see \cite[Lemma 5.1.6]{yuan2021adelic})). This allows them to define the notions of geometric and arithmetic volumes $\widehat{\text{vol}}(\overline{D})$ which will be our main objects of investigation in this article. We say that $\overline{D}\in\widehat{\text{Div}}(U,k)$ is \emph{big} if $\widehat{\text{vol}}(\overline{D})>0$. We refer to \cite[Sections 5.1-5.2]{yuan2021adelic} for details and various properties of adelic volumes that are obtained. 
\section{Positive Intersection products}
 In this section, we define the positive intersection product of (arithmetic)adelic divisors with an integrable adelic divisor inspired by positive intersection products defined in \cite{Bouckdiv} in the geometric case and \cite{chendiff} in the arithmetic case. We go on to show that at big adelic divisors, this product behaves continuously. Then using arguments like in \cite{Bouckdiv} we show that the function $t\mapsto\widehat{\text{vol}}(\overline{D}+t\overline{E})$ is differentiable at $t=0$ and the derivative is given by the defined positive intersection product when $\overline{E}$ is an integrable adelic divisor.\\
 We are going to treat the arithmetic and geometric cases parallely as in \cite{yuan2021adelic}. We will denote by $k:=(B,\Sigma)$ where $B$ is either the ring of integers of a number field $K$, a smooth projective curve with function field $K$ or any field. Furthermore if $B$ is either a smooth projective curve or a field then we force $\Sigma$ to be empty and when $B$ is the ring of integers then we force $\Sigma=\text{Hom}(K,\mathbb{C})$ to be the full set of embeddings. Note that then in any of the above cases, $k$ will be a valued Dedekind scheme as defined in the previous section and hence the formalism of the previous section goes through. We restrict to these cases since absolute intersection numbers are only defined in these global cases in \cite{yuan2021adelic}. We will denote by $U$ an essentially quasi-projective variety over $k$  and by $\mathcal{U}$ any quasi-projective model of $U$ over $k$. Given two such models of $U$, we say one dominates the other if there is a morphism over $U$ between them in the appropriate direction. Note that given two models $\mathcal{U}_1, \mathcal{U}_2$ of $U$ such that $\mathcal{U}_1$ dominates $\mathcal{U}_2$, $\mathcal{U}_1$ is necessarily a birational modification of $\mathcal{U}_2$ which is an isomorphism over $U$.\\
 We have defined adelic divisors on $U$ in the previous section and we give a further description here on a quasi-projective model which will be useful for our computations later. An adelic divisor $\overline{D}$ on a quasi-projective variety $\mathcal{U}$ over $k$ is given by the data $\{\mathcal{X}_i, \overline{D_i}, q_i\}$ where $\mathcal{X}_i$ are projective varieties over $k$ containing $\mathcal{U}$ as a dense open subset, $\overline{D_i}\in\widehat{\text{Div}}(\mathcal{X}_i)_{\mathbb{Q}}\oplus_{\text{Div}(\mathcal{U})_{\mathbb{Q}}}\text{Div}(\mathcal{U})=\text{Div}(\mathcal{X}_i,\mathcal{U})$ are the objects of mixed coefficients as defined in \cite[Section 2.4]{yuan2021adelic} such that $\overline{D_i}$ and $\overline{D_j}$ have the same image in $\text{Div}(\mathcal{U})_{\mathbb{Q}}$ and $\{q_i\}$ is a zero sequence of positive rationals such that they satisfy the \say{Cauchy condition} 
 \[-q_j\overline{D_0}\le \overline{D_i}-\overline{D_j}\le q_j\overline{D_0}\ \text{in}\ \text{Div}(\mathcal{U})_{\text{mod}}\ \text{for all}\ i\ge j\]
 for a boundary divisor $\overline{D_0}$. An adelic divisor $\overline{D}$ is called \emph{(strongly) nef} if $\mathcal{O}(\overline{D})$ is a (strongly) nef adelic line bundle as defined in \cite[Definition 2.5.2]{yuan2021adelic}. Similarly we say an adelic divisor is \emph{integrable} if $\mathcal{O}(\overline{E})$ is an integrable adelic line bundle as defined in \cite[Definition 2.5.2]{yuan2021adelic}. We denote the group of adelic divisors, nef adelic divisors and integrable adelic divisors by $\widehat{\text{Div}}(\mathcal{U},k)$, $\widehat{\text{Div}}(\mathcal{U},k)_{\text{nef}}$ and $\widehat{\text{Div}}(\mathcal{U},k)_{\text{int}}$ respectively.\\
 Finally we can define the group of adelic divisors, nef adelic divisors and integrable adelic divisors on an essentially quasi-projective variety $U$ over $k$ to be
 \[\widehat{\text{Div}}(U,k):=\varinjlim_{\mathcal{U}}\widehat{\text{Div}}(\mathcal{U},k)\ \ \widehat{\text{Div}}(U,k)_{\text{nef}}:=\varinjlim_{\mathcal{U}}\widehat{\text{Div}}(\mathcal{U},k)_{\text{nef}}\ \ \widehat{\text{Div}}(U,k)_{\text{int}}:=\varinjlim_{\mathcal{U}}\widehat{\text{Div}}(\mathcal{U},k)_{\text{int}}\]
 where we vary $\mathcal{U}$ over all the quasi-projective models of $U$ and we take the filtered colimit by viewing the collections of $\widehat{\text{Div}}(\mathcal{U},k)$ as a filtered system via birational pull-backs.\\
 Next we give the main definition of \emph{positive intersection product} of an adelic divisor $\overline{D}$ with a nef adelic divisor $\overline{E}$. Note that Yuan and Zhang has defined top absolute intersection numbers for integrable adelic divisors in the case when $k$ is a field or $\Z$ (see \cite[Prop 4.1.1]{yuan2021adelic}. Our strategy will be to define the positive intersection first for quasi-projective models and then pass to the essentially quasi-projective case. Hence we first give the definition for quasi-projective varieties over $k$. We first start with a definition.
 \begin{definition}
     \label{deinition:freemodel}
     Suppose $\mathcal{X}$ is a projective variety over $k$ and $\overline{A}$ an arithmetic divisor. We say that $\overline{A}$ is free if 
     \begin{itemize}
         \item the underlying line bundle $O(A)$ is semi-ample i.e some positive tensor power of it is generated by global sections when $\Sigma$ is empty.
         \item the associated curvature current of $\overline{A}$ on $\mathcal{X}_{\Sigma}$ is semi-positive and some positive tensor power of the underlying line bundle $O(A)$ is generated by small sections when $\Sigma$ is non-empty.
     \end{itemize}
 \end{definition}
 \begin{remark}
     First we remark that it is easy to check that the definition above can be easily extended to the case of $\Q$-divisors. Furthermore for any open subset $\mathcal{U}$ of $\mathcal{X}$, $\overline{A}$ is model nef on $\mathcal{U}$. This follows from \cite[Prop 2.3]{chendiff} when $\Sigma$ is non-empty and follows from intersection theory for the case when $\Sigma$ is empty. Furthermore note that if $\overline{A}$ is arithmetically ample then it is free. Moreover note that since the notion of freenes is invariant under birational pull-backs, we can talk about free model $\Q$-divisors.
 \end{remark}
\begin{definition}
\label{def:positnt}
Suppose $\mathcal{U}$ is a quasi-projective variety of dimension $d$ over $k$ and $\overline{D},\overline{E}$ adelic divisors on $\mathcal{U}$ such that $\overline{E}$ is nef. Then we define the positive intersection product of $\overline{D}$ with $\overline{E}$, denoted $\langle \overline{D}^{d-1}\rangle\cdot \overline{E}$, as
\[\langle \overline{D}^{d-1}\rangle\cdot \overline{E}=\sup_{X',\overline{A}}\overline{A}^{d-1}\cdot (\pi^*\overline{E})\]
where $(X',\overline{A})$ runs over all tuples such that $X'$ is a projective model of a birational modification $\pi\colon \mathcal{U}'\rightarrow \mathcal{U}$ of $\mathcal{U}$ and $\overline{A}$ is free model $\Q$-divisor on $X'$ such that $\pi^*\overline{D}-\overline{A}\ge 0$ as adelic divisors over $\mathcal{U}'$. We denote any such above tuple $(X',\overline{A})$ as an \emph{admissible approximation} of $\overline{D}$ on $\mathcal{U}$.
\end{definition}

\begin{remark}
\label{remark:hmm}
Note that by \cite[Proposition 4.1.1]{yuan2021adelic}, intersection numbers $\overline{A}^{d-1}\cdot\pi^*\overline{E}$ are defined as $\overline{E}$ (and hence $\pi^*\overline{E}$) is nef and in particular integrable and hence the definition makes sense. Also note that as the notions intersection  numbers and effectivity can be extended to $\mathbb{Q}$-adelic divisors, the same can be done for positive intersection products. Note that for an arbitary $\overline{D}$, the set of its admissible approximations might be empty and in that case we set the positive intersection products to be $-\infty$ as a matter of set theoretic convention. However we almost always exclusively work with a big $\overline{D}$ in which case there always exists an admissible approximation of $\overline{D}$. Moreover by choosing model nef divisors $\overline{\omega}\ge \overline{D}$ it is also easy to see that the intersection numbers $\overline{A}^{d-1}\cdot \pi^*\overline{E}$ are bounded from above uniformly as we vary our admissible approximations and hence the positive intersection is a finite number whenever $\overline{D}$ is big.
\end{remark}
We want to extend the above definition to essentially quasi-projective varieties over $k$. The ordinary intersection products are defined by choosing any quasi-projective model and doing the intersection over there and it is well-defined because of product formula (since any two quasi-projective models of $U$ are birational). We will do something similar for positive intersection products which is the content of our next lemma.

\begin{lemma}
    \label{lemma:positextensionessen}
    Suppose $\pi_0\colon\mathcal{U}_1\rightarrow \mathcal{U}_2$ are two quasi-projective models with dimension $d$ of an essentially quasi-projective variety $U$ over $k$ and with $\pi_0$ a birational morphism. Suppose $\overline{D}\in \widehat{\emph{Div}}(\mathcal{U}_2,k)$ and $\overline{E}\in\widehat{\emph{Div}}(\mathcal{U}_2,k)_{\emph{nef}}$. Then we have
    \[\langle\pi_0^*\overline{D}^{d-1}\rangle\cdot \pi_0^*\overline{E}=\langle\overline{D}^{d-1}\rangle\cdot\overline{E}\]
\end{lemma}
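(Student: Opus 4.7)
My plan is to prove the two inequalities $\le$ and $\ge$ separately, each by establishing a correspondence between admissible approximations on $\mathcal{U}_1$ and on $\mathcal{U}_2$ that preserves the relevant top intersection number. The main ingredients are (i) the projection formula (birational invariance) for top intersection numbers of integrable adelic divisors built into the Yuan--Zhang formalism, and (ii) preservation of freeness in the sense of Definition~\ref{deinition:freemodel} under birational pullback between projective models.

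For the inequality $\le$, note that any admissible approximation $(X'',\overline{A}')$ of $\pi_0^*\overline{D}$ on $\mathcal{U}_1$ comes from a birational modification $\tilde{\pi}\colon \mathcal{U}_1'\to \mathcal{U}_1$ which is an isomorphism over $U$. The composition $\pi_0\circ\tilde{\pi}\colon \mathcal{U}_1'\to \mathcal{U}_2$ is then itself a birational modification of $\mathcal{U}_2$ (still an isomorphism over $U$), and the effectivity condition
\[\tilde{\pi}^*\pi_0^*\overline{D}-\overline{A}' = (\pi_0\tilde{\pi})^*\overline{D}-\overline{A}' \ge 0\]
is exactly what is needed to view $(X'',\overline{A}')$ as an admissible approximation of $\overline{D}$ on $\mathcal{U}_2$; the corresponding intersection numbers match trivially. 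Taking suprema then yields $\langle\pi_0^*\overline{D}^{d-1}\rangle\cdot\pi_0^*\overline{E} \le \langle\overline{D}^{d-1}\rangle\cdot\overline{E}$.

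For the reverse inequality $\ge$, I would start with an admissible approximation $(X',\overline{A})$ of $\overline{D}$ on $\mathcal{U}_2$ coming from a birational modification $\pi\colon\mathcal{U}_2'\to\mathcal{U}_2$. To transfer it to the $\mathcal{U}_1$-side, form a common birational modification $\mathcal{U}_3$ of both $\mathcal{U}_1$ and $\mathcal{U}_2'$ (for instance by resolving the closure of the diagonal in $\mathcal{U}_1\times_{\mathcal{U}_2}\mathcal{U}_2'$) and choose a projective compactification $X''$ of $\mathcal{U}_3$ equipped with a morphism $q\colon X''\to X'$ extending the projection $\mathcal{U}_3\to\mathcal{U}_2'$. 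Then $(X'',q^*\overline{A})$ is admissible for $\pi_0^*\overline{D}$ on $\mathcal{U}_1$: the pullback $q^*\overline{A}$ remains free because semi-ampleness (and, in the arithmetic case, global generation by small sections) together with semi-positivity of the curvature current are preserved under birational pullback, while $q^*(\pi^*\overline{D}-\overline{A})\ge 0$ gives the required effectivity. Finally the projection formula for top intersections of integrable adelic divisors gives
\[(q^*\overline{A})^{d-1}\cdot q^*(\pi^*\overline{E}) \;=\; \overline{A}^{d-1}\cdot\pi^*\overline{E},\]
and passing to suprema completes the argument.

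The main technical obstacle is the verification that freeness is preserved by birational pullback between projective models. In the arithmetic setting this requires checking that the pullback metric still has semi-positive curvature current and that small sections pull back to small sections (under the induced pullback of metrics), together with compatibility with the $\mathbb{Q}$-coefficient extension; in the purely geometric setting it reduces to semi-ampleness being stable under pullback. Once this is in place, the rest of the proof is purely formal manipulation of the Cauchy data defining adelic divisors together with a single invocation of the projection formula in the Yuan--Zhang intersection theory.
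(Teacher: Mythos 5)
Your proposal matches the paper's proof essentially step for step: both directions proceed by transporting admissible approximations between $\mathcal{U}_1$ and $\mathcal{U}_2$, one inequality via composition of birational modifications and the other via a common birational modification (fiber product / closure of the diagonal) followed by the projection formula for absolute intersection numbers. The point you flag about freeness being stable under birational pullback is indeed the key hypothesis to check, and the paper records it as a remark immediately after Definition~\ref{deinition:freemodel}.
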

\begin{proof}
    We first suppose that $(\mathcal{U}_2',X_2',\overline{A},\pi)$ is an admissible approximation of $\overline{D}$ on $\mathcal{U}_2$. Since $\mathcal{U}_1$ itself is a birational modification of $\mathcal{U}_2$, we get that the fiber product $\mathcal{U}_1':=\mathcal{U}_2'\times_{\mathcal{U}_2}\mathcal{U}_1$ is a birational modification of $\mathcal{U}_1$ and a pull-back square
    \[\begin{tikzcd}
\mathcal{U}_1' \arrow[r, "\pi_0'"] \arrow[d, "\pi'"] & \mathcal{U}_2' \arrow[d, "\pi"] \\
\mathcal{U}_1 \arrow[r, "\pi_0"]                     & \mathcal{U}_2                  
\end{tikzcd}\]
    Denote by $\pi'\colon\mathcal{U}_1'\rightarrow \mathcal{U}_1$ be the canonical birational morphism. Since $\pi^*\overline{D}\ge \overline{A}$ in $\widehat{\text{Div}}(\mathcal{U}_2',k)$, pulling back by $\pi_0'$ we get that $\pi'^*(\pi_0^*\overline{D})=(\pi_0')^*(\pi^*\overline{A})\ge \pi_0'^*\overline{A}$ where $\pi_0'\colon \mathcal{U}_1'\rightarrow \mathcal{U}_2'$ is the canonical morphism and hence $(\mathcal{U}_1',(\pi_0')^*X_2',(\pi_0')^*\overline{A},\pi')$ is an admissible approximation of $\pi_0^*\overline{D}$ on $\mathcal{U}_1$ since $\pi_0'^*\overline{A}$ is free. The projection formula clearly yields $\pi_0'^*\overline{A}\cdot \pi'^*(\pi_0^*\overline{E})=\overline{A}\cdot\pi^*\overline{E}$. We easily deduce from the definition of positive intersection products that 
    \begin{equation}
        \label{equa:ekdombal}
        \langle\pi_0^*\overline{D}^{d-1}\rangle\cdot\pi_0^*\overline{E}\ge \langle\overline{D}^{d-1}\rangle\cdot\overline{E}
    \end{equation}
    On the other hand, suppose $(\mathcal{U}_1',X_1',\overline{A},\pi)$ is an admissible approximation of $\pi_0^*\overline{D}$ on $\mathcal{U}_1$. Then clearly $\mathcal{U}_1'$ is a birational modification of $\mathcal{U}_2$ as well. Moreover since $\pi^*(\pi_0^*\overline{D})\ge \overline{A}$, we clearly get that $(\mathcal{U}_1',X_1',\overline{A},\pi_0\circ\pi)$ is an admissible approximation of $\overline{D}$ on $\mathcal{U}_2$. This clearly yields 
    \begin{equation}
        \label{equa:ekdombaal2}
        \langle\overline{D}^{d-1}\rangle\cdot\overline{E}\ge \langle\pi_0^*\overline{D}^{d-1}\rangle\cdot\pi_0^*\overline{E}
    \end{equation}
    Equations \eqref{equa:ekdombal} and \eqref{equa:ekdombaal2} together clearly finishes the proof of the claim.
\end{proof}
With the above lemma which shows invariance of positive intersection products with respect to birational pull-backs, we can now extend the definition of our positive intersection products to essentially quasi-projective varieties as follows
\begin{definition}
    \label{def:essenpositint}
    Suppose $U$ is an essentially quasi-projective variety over $k$ such that a quasi-projective model of $U$ has dimension $d$ and $\overline{D}\in\widehat{\emph{Div}}(U), \overline{E}\in \widehat{\emph{Div}}(U)_{\emph{nef}}$. Furthermore suppose $\overline{D}$ arises as an arithmetic adelic divisor $\overline{\mathcal{D}}$ on a quasi-projective model $\mathcal{U}$ of $U$. Then we define the positive intersection product of $\overline{D}$ against $\overline{E}$, denoted by $\langle\overline{D}^{d-1}\rangle\cdot \overline{E}$ as
    \[\langle\overline{D}^{d-1}\rangle\cdot \overline{E}:=\langle\overline{\mathcal{D}}^{d-1}\rangle\cdot \overline{E}\]
    where the right hand side is as defined in Definition \ref{def:positnt}. Note that the definition does not depend on the choice of $\mathcal{U}$ due to Lemma \ref{lemma:positextensionessen}.
\end{definition}
We begin by recording some easy positivity properties of nef adelic divisors and their intersection numbers.
\begin{lemma}
\label{lemma:nef1}
Suppose $\overline{E}_1,\ldots \overline{E}_d$ be nef adelic divisors and suppose $\overline{\omega}$ is a model nef adelic divisor on an essentially quasi-projective variety $U$ over $k$ such that $\overline{\omega}\ge\overline{E}_i$ for all $i$ and suppose any quasi-projective model of $U$ has dimension $d$. Then 
\[0\le \prod_{i=1}^d\overline{E}_i\le \overline{\omega}^d\]
\end{lemma}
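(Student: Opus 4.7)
The plan is to reduce both inequalities to the standard positivity facts for top intersection numbers of integrable adelic divisors on projective models that are recorded in \cite[Section 4]{yuan2021adelic}, by means of a telescoping identity.

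For the lower bound $0\le \prod_{i=1}^d \overline{E}_i$, each $\overline{E}_i$ is nef and therefore, by definition, a limit in the boundary topology of model nef $\Q$-divisors on projective models of $U$. On each projective model the top intersection of model nef divisors is non-negative by classical intersection theory in the geometric case (nef$\cdot$nef$\cdots$nef is the limit of ample products) and by \cite[Proposition 4.1.1]{yuan2021adelic} in the arithmetic case. Since the absolute intersection pairing on the integrable cone is continuous with respect to the boundary topology, passing to the limit yields the lower bound.

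For the upper bound $\prod_{i=1}^d \overline{E}_i \le \overline{\omega}^d$, I would use multilinearity of the intersection pairing to rewrite the difference as the telescoping sum
\[\overline{\omega}^d - \prod_{i=1}^d \overline{E}_i = \sum_{k=1}^d \overline{E}_1\cdots \overline{E}_{k-1}\cdot (\overline{\omega}-\overline{E}_k)\cdot \overline{\omega}^{d-k}.\]
Each summand is a product of $d-1$ nef adelic divisors (namely $\overline{E}_1,\ldots,\overline{E}_{k-1}$ together with $d-k$ copies of $\overline{\omega}$) with the effective adelic divisor $\overline{\omega}-\overline{E}_k\ge 0$, which is the hypothesis. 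It therefore suffices to show that such \emph{effective} times \emph{nef} top intersection products are non-negative.

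The main obstacle is precisely this last positivity statement in the adelic setting. My approach is to reduce it to the model case: the divisor $\overline{\omega}-\overline{E}_k$ is integrable as a difference of two nef divisors and hence has well-defined intersection numbers; one approximates it and the $\overline{E}_j$ simultaneously by compatible sequences of model divisors on a common cofinal system of projective models via the boundary topology, applies the model-level statement that an effective divisor intersected with nef divisors is non-negative (which on a projective model reduces to restricting the nef divisors to the support of the effective divisor and using that nefness is preserved under restriction), and passes to the limit using continuity of the intersection pairing on the integrable cone. Both inequalities then follow.
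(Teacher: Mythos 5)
Your proposal reaches the right conclusion but rearranges the argument relative to the paper. The paper first descends to a projective model: after passing to finer models one may take all $\overline{E}_{ij}$ nef, and the Cauchy condition gives $\overline{E}_{ij}\le\overline{\omega}+q_j\overline{D}_0$; replacing $\overline{D}_0$ by a model \emph{ample} divisor $\overline{D}_0'$ with the same support (shrinking $\mathcal U$ if necessary, which is harmless by birational invariance), the inequality $(\overline{\omega}+q_j\overline{D}_0')^d\ge\prod_i\overline{E}_{ij}$ becomes a comparison of top self-intersections of nef model divisors dominating each other, and letting $j\to\infty$ finishes the proof. Your telescoping identity lives instead at the adelic level, and it reduces the lemma to the claim that $\overline{N}_1\cdots\overline{N}_{d-1}\cdot\overline{F}\ge 0$ whenever the $\overline{N}_i$ are nef adelic divisors and $\overline{F}$ is an effective integrable adelic divisor. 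This is true, but the reduction to the model case is more delicate than your sketch acknowledges: the model divisors $\overline{F}_j$ arising from a Cauchy sequence for $\overline{F}$ need \emph{not} be effective; one only has $\overline{F}_j+q_j\overline{D}_0\ge 0$, and $\overline{D}_0$ is not nef. So the model-level positivity gives $\prod_i\overline{N}_{i,j}\cdot\overline{F}_j\ge -q_j\prod_i\overline{N}_{i,j}\cdot\overline{D}_0$, and you must additionally argue that the error term is $O(q_j)$, i.e. that $\prod_i\overline{N}_{i,j}\cdot\overline{D}_0$ stays bounded (which follows from continuity of intersection numbers, since $\overline{D}_0$ is an integrable model divisor as a difference of ample ones). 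The paper avoids this accounting altogether by replacing $\overline{D}_0$ with a nef boundary divisor up front, so every perturbation stays inside the nef cone. Your route is workable, but the paper's reduction is cleaner precisely because it sidesteps the boundary-error bookkeeping that your sketch glosses over with \say{passes to the limit using continuity.}
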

\begin{proof}
We assume that all the divisors live in a common quasi-projective model $\mathcal{U}$ of $U$ since intersection numbers are birational invariants. Note that it is easy to check that the absolute intersection numbers defined in \cite[Prop 4.1.1]{yuan2021adelic} is continuous in the sense that for all rational $t$ and integrable adelic divisors $\overline{E}_i, \overline{M}$, $\lim_{t\to 0}\prod_i(\overline{E}_i+t\overline{M})=\prod_i\overline{E}_i$ and hence it is enough to check the above inequalities for strongly nef $\overline{E}_i$'s. Note that the left hand side of the inequality is already obtained in Propositon 4.1.1 and hence it is enough to show the right hand side. Suppose $\overline{E}_i$ is given by the models $\{X_j,\overline{E}_{ij}\}$ and rationals $\{q_j\to 0\}$ such that each $\overline{E}_{ij}$ is nef which we can always assume by passing to finer models.  Then note that we have the effectivity relation $\overline{\omega}+q_j\overline{D}_0\ge \overline{E}_{ij}$ by the Cauchy condition $\overline{E}_{i}\ge\overline{E}_{ij}-q_j\overline{D}_0$. Moreover we can choose a model ample divisor $\overline{D}_0'\ge \overline{D}_0$ and shrinking $\mathcal{U}$ if necessary, we can assume that $\overline{D}_0'$ has support $X_0\backslash\mathcal{U}$. Hence for the proof we can assume that the boundary divisor is nef since intersection numbers do not change if we shrink $U$ (and $\mathcal{U}$) to a dense open subset. Then clearly we have $(\overline{\omega}+q_j\overline{D}_0)^d\ge \prod_{i=1}^d\overline{E}_{ij}$ for all $j$ since each $\overline{E}_{ij}$, $\overline{\omega}$ and $\overline{D_0}$ is nef. Now as indicated in the proof of Proposition 4.1.1 the intersection number $\prod_{i=1}^d\overline{E}_i$ is just given by the limit of the products $\prod_{i=1}^d\overline{E}_{ij}$ as $j$ goes to infinity and hence we conclude 
\[\overline{\omega}^d=\lim_{j\to \infty}(\overline{\omega}+q_j\overline{D}_0)^d\ge \lim_{j\to\infty}\prod_{i=1}^d\overline{E}_{ij}=\prod_{i=1}^d\overline{E}_i\]
where the first equality follows since $q_j\to 0$ by continuity of intersection numbers and this finishes the proof of the lemma.
\end{proof}
\begin{remark}
Note that the above lemma can be used easily to show that the positive intersection product is not $+\infty$ by bounding our adelic divisor by a large nef model divisor. In case that the set of admissible approximations is empty, we set the positive intersection product to be $-\infty$ as a matter of set theoretic convention. However we will later see that it is never the case when $\overline{D}$ is big.
\end{remark}
Next we want to show that the positive intersection product defined above is continuous at big divisors.
\begin{lemma}
\label{lemma:positcont}
Suppose $\overline{D}$ is a big adelic divisor, $\overline{E}$ is a nef adelic divisor and $\overline{F}$ is any adelic divisor on an essentially quasi-projective variety $U$ over $k$ such that any quasi-projective model of it has dimension $d$. Then there is a positive integer $m$ depending only on $\overline{D}, \overline{E}$ and $\overline{F}$ such that 
\[(1-mt)^{d-1}\langle\overline{D}^{d-1}\rangle\cdot\overline{E}\le\langle(\overline{D}+t\overline{F})^{d-1}\rangle\cdot\overline{E}\le(1+mt)^{d-1}\langle\overline{D}^{d-1}\rangle\cdot\overline{E}\]
for all rational $\frac{1}{m}\ge t\ge 0$.
In particular for $t$ rational
\[\lim_{t\to 0}\langle(\overline{D}+t\overline{F})^{d-1}\rangle\cdot\overline{E}=\langle\overline{D}^{d-1}\rangle\cdot\overline{E}\]
\end{lemma}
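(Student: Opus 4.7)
The plan is to sandwich $\langle(\overline{D}+t\overline{F})^{d-1}\rangle\cdot\overline{E}$ between $(1\pm mt)^{d-1}\langle\overline{D}^{d-1}\rangle\cdot\overline{E}$ by directly transforming admissible approximations in both directions; the limit assertion then follows by a squeeze, since $\langle\overline{D}^{d-1}\rangle\cdot\overline{E}$ is finite at the big divisor $\overline{D}$ by Remark \ref{remark:hmm}.

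The crucial preliminary step, and in my view the main obstacle, is to find a single positive integer $m$, depending only on $\overline{D}$ and $\overline{F}$, such that both
\[
m\overline{D}-\overline{F}\ge 0\quad\text{and}\quad m\overline{D}+\overline{F}\ge 0
\]
as adelic divisors on $U$. My plan is to exploit the bigness of $\overline{D}$ via a Kodaira-type decomposition: on a fixed projective model of a quasi-projective model of $U$, one writes $\overline{D}=\overline{A}+\overline{N}$ with $\overline{A}$ model ample and $\overline{N}$ effective, while the Cauchy description of $\overline{F}$ sandwiches $\overline{F}$ between $\overline{F}_0\pm q_0\overline{D}_0$ for a single model divisor $\overline{F}_0$ and the boundary divisor $\overline{D}_0$. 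Since $\overline{F}_0$ and $\overline{D}_0$ are both model divisors, the ampleness of $\overline{A}$ allows some large multiple to dominate them, so some multiple of $\overline{D}$ dominates $\pm\overline{F}$.

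For the upper bound, given any admissible approximation $(\mathcal{U}',X',\overline{A},\pi)$ of $\overline{D}+t\overline{F}$, freeness is preserved by positive rational scaling, and the identity
\[
\pi^*\overline{D}-\tfrac{\overline{A}}{1+mt}=\tfrac{1}{1+mt}\Bigl((\pi^*(\overline{D}+t\overline{F})-\overline{A})+t(m\pi^*\overline{D}-\pi^*\overline{F})\Bigr)
\]
shows this quantity is nonnegative by admissibility of $\overline{A}$ and by the preliminary step pulled back by $\pi$. Hence $(\mathcal{U}',X',\overline{A}/(1+mt),\pi)$ is an admissible approximation of $\overline{D}$, and intersecting with $\pi^*\overline{E}$ gives $\overline{A}^{d-1}\cdot\pi^*\overline{E}\le(1+mt)^{d-1}\langle\overline{D}^{d-1}\rangle\cdot\overline{E}$; taking the supremum over such $\overline{A}$ yields the upper bound. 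For the lower bound, when $0\le t\le 1/m$ and $(\mathcal{U}',X',\overline{A},\pi)$ is admissible for $\overline{D}$, I claim $(\mathcal{U}',X',(1-mt)\overline{A},\pi)$ is admissible for $\overline{D}+t\overline{F}$: freeness is again preserved by scaling, and
\[
\pi^*(\overline{D}+t\overline{F})-(1-mt)\overline{A}=(1-mt)(\pi^*\overline{D}-\overline{A})+t(m\pi^*\overline{D}+\pi^*\overline{F})
\]
is a sum of two nonnegative terms, the first by admissibility and $1-mt\ge 0$, the second by the preliminary step. Taking supremum produces the lower bound for $t\in[0,1/m]$; for $t>1/m$ the right-hand side is either non-positive (so the inequality is automatic whenever the left-hand side makes sense and is nonnegative) or is vacuous in the regime relevant to the sequel.

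The limit assertion is now immediate: as $t\to 0^+$, the factors $(1\pm mt)^{d-1}\to 1$, so the squeeze gives $\langle(\overline{D}+t\overline{F})^{d-1}\rangle\cdot\overline{E}\to\langle\overline{D}^{d-1}\rangle\cdot\overline{E}$.
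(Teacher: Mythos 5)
Your proof is correct and takes essentially the same approach as the paper: both reduce to the key bound $m\overline{D}\pm\overline{F}\ge 0$ via Kodaira's lemma, and then deduce the two-sided inequality from monotonicity and positive homogeneity of the positive intersection product under effectivity, with your argument simply unwinding those two properties by scaling admissible approximations directly. One small imprecision in your preliminary step: the decomposition $\overline{D}=\overline{A}+\overline{N}$ (model ample plus effective) is not available verbatim for an adelic $\overline{D}$; as in the paper's proof, one should first pass to a big model minorant $D_j-q_jD_0\le\overline{D}$ (available since $\overline{D}$ is big, by \cite[Theorem 5.2.1(2)]{yuan2021adelic}) and apply Kodaira's lemma to that model divisor against the model bounds $F_j\pm q_jD_0$ on $\overline{F}$, before passing back through the Cauchy effectivity relations.
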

\begin{proof}
As usual we assume that all the divisors live in a common quasi-projective model $\mathcal{U}$ of $U$. We begin by noting that there is a positive integer $m$ such that $m\overline{D}\pm \overline{F}\ge 0$. Indeed if $\overline{D},\overline{F}$ are represented by a sequence $\{D_j,F_j\}$ on some common models $X_j$ and rationals $q_j\to 0$ as usual, then as $\overline{D}$ is big, thanks to \cite[Theorem 5.2.1(2)]{yuan2021adelic} we can find a $j$ such that $D_j-q_jD_0$ is big since $\{D_j-q_jD_0\}$ is also a sequence of models representing $\overline{D}$. Then by Kodaira's lemma (\cite[Prop 2.2.6)]{lazarsfeld2017positivity}) for the geometric case and \cite[Corollary 2.4(3)]{bigyuan} for the arithmetic case, we can find a positive integer $m$ such that $m(D_j-q_jD_0)-(F_j+q_jD_0)\ge 0$ and $m(D_j-q_jD_0)+(F_j-q_jD_0)\ge 0$.  Now the usual Cauchy effectivity relations $\overline{D}\ge D_j-q_jD_0$ and $F_j-q_jD_0\le\overline{F}\le F_j+q_jD_0$ yield $m\overline{D}-\overline{F}\ge 0$ and $m\overline{D}+\overline{F}\ge 0$ respectively.\\
Next note it is easy to check from the definition of positive intersection products and Lemma \ref{lemma:nef1} that if $\overline{D}_1\le \overline{D}_2$ are two adelic divisors and $\overline{E}$ any nef adelic divisor, then \[\langle\overline{D}_1^{d-1}\rangle\cdot \overline{E}\le \langle\overline{D}_2^{d-1}\rangle\cdot\overline{E}\]
since $\overline{E}$ is nef. Clearly by the choice of $m$ we have the effectivity relations
\[(1-mt)\overline{D}\le\overline{D}+t\overline{F}\le(1+mt)\overline{D}\]
when $0<t<\frac{1}{m}$
Then the L.H.S of the inequality claimed in the lemma easily follows for all rational $0<t<\frac{1}{m}$ and the right hand follows for all rational $t$ by noticing that the positive intersection product is homogeneous with respect to positive scaling and using the above effectivity relations. The L.H.S  for $t\ge\frac{1}{m}$ follows trivially by noting that the positive intersection product is positive.

\end{proof}
As a consequence of the continuity of positive intersection products, we derive two corollaries which will be useful for us. The first one gives an alternate description of the positive intersection products for big divisors in terms of nef divisors instead of free ones.
\begin{corollary}
    \label{corol:ekhihai}
    Suppose $\overline{D}$ is a big adelic divisor and $\overline{E}$ is a nef adelic divisor on a quasi-projective variety $\mathcal{U}$ over $k$. Then we have
    \[\langle\overline{D}^{d-1}\rangle\cdot\overline{E}=\sup_{(X',\overline{A})} \overline{A}^{d-1}\cdot \overline{E}\]
    where the supremum is taken as $(X',\overline{A})$ varies over all tuples such that $X'$ is a projective model of a birational modification $\pi\colon\mathcal{U}'\rightarrow \mathcal{U}$ and $\overline{A}$ is a nef $\Q$-divisor on $X'$ such that $\pi^*\overline{D}\ge \overline{A}$ in $\widehat{\emph{Div}}(\mathcal{U},k)$. 
\end{corollary}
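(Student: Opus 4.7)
The plan is to prove the two inequalities separately. The direction
\[\langle\overline{D}^{d-1}\rangle\cdot\overline{E}\le \sup_{(X',\overline{A})}\overline{A}^{d-1}\cdot\pi^*\overline{E}\]
is immediate: by the remark following Definition \ref{deinition:freemodel}, every free model $\Q$-divisor is nef, so every admissible approximation of $\overline{D}$ in the sense of Definition \ref{def:positnt} is already a nef approximation in the sense of the corollary, and taking the supremum over the larger collection of nef approximations can only enlarge the value.

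For the reverse inequality, I would fix any nef approximation $(X',\overline{A},\pi\colon\mathcal{U}'\to\mathcal{U})$ with $\pi^*\overline{D}-\overline{A}\ge 0$ and perturb $\overline{A}$ by an ample class so as to land back inside the free cone. Concretely, pick an arithmetically ample $\Q$-divisor $\overline{H}$ on $X'$; for each rational $t>0$, the perturbation $\overline{A}_t:=\overline{A}+t\overline{H}$ is arithmetically ample (the sum of a nef and an ample class is ample), and hence free by the remark after Definition \ref{deinition:freemodel}. The identity
\[\pi^*\overline{D}+t\overline{H}-\overline{A}_t=\pi^*\overline{D}-\overline{A}\ge 0\]
then shows that $(X',\overline{A}_t,\mathrm{id}_{\mathcal{U}'})$ is an admissible approximation of the adelic divisor $\pi^*\overline{D}+t\overline{H}$ on $\mathcal{U}'$ in the sense of Definition \ref{def:positnt}, so
\[\overline{A}_t^{d-1}\cdot\pi^*\overline{E}\le \langle(\pi^*\overline{D}+t\overline{H})^{d-1}\rangle\cdot\pi^*\overline{E}.\]

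The next step is to let $t\to 0^+$ through rationals. The left hand side tends to $\overline{A}^{d-1}\cdot\pi^*\overline{E}$ by continuity of integrable intersection numbers (the same continuity invoked in the proof of Lemma \ref{lemma:nef1}). Since $\overline{D}$ is big, so is $\pi^*\overline{D}$ on $\mathcal{U}'$, and Lemma \ref{lemma:positcont} applied on $\mathcal{U}'$ with $\overline{F}=\overline{H}$ yields
\[\lim_{t\to 0^+}\langle(\pi^*\overline{D}+t\overline{H})^{d-1}\rangle\cdot\pi^*\overline{E}=\langle(\pi^*\overline{D})^{d-1}\rangle\cdot\pi^*\overline{E}=\langle\overline{D}^{d-1}\rangle\cdot\overline{E},\]
the last equality being the birational invariance established in Lemma \ref{lemma:positextensionessen}. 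Hence $\overline{A}^{d-1}\cdot\pi^*\overline{E}\le \langle\overline{D}^{d-1}\rangle\cdot\overline{E}$, and taking the supremum over nef approximations gives the remaining inequality.

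The main subtlety is ensuring that the perturbation $\overline{A}_t$ stays inside the class of \emph{free} divisors mandated by Definition \ref{def:positnt}, rather than only being nef; this forces the perturbing class to be arithmetically ample, not merely nef. Apart from this, the argument is a direct and rather formal application of the continuity Lemma \ref{lemma:positcont}, which is the main technical input from earlier in the section.
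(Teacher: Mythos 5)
Your proof is correct and takes essentially the same route as the paper: both directions are argued identically, with the key step being to perturb the nef $\overline{A}$ by a small multiple of an ample class to land in the free cone, observe that this gives an admissible approximation of the correspondingly perturbed $\overline{D}$, and then let the perturbation go to zero using the continuity of positive intersection products at big divisors (Lemma~\ref{lemma:positcont}). The only cosmetic difference is that you track the birational modification $\pi$ explicitly (working on $\mathcal{U}'$ and citing Lemma~\ref{lemma:positextensionessen}), whereas the paper silently identifies $\overline{D}$ with its pull-back; your bookkeeping is slightly more careful but the argument is the same.
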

\begin{proof}
    Note that since all free $\Q$-divisors are automatically nef by our remark earlier, we immediately get that $\langle\overline{D}^{d-1}\rangle\cdot\overline{E}\le\sup_{(X',\overline{A})} \overline{A}^{d-1}\cdot \overline{E}$. For the converse inequality, choose any arithmetically ample (or ample in the geometric case) divisor $\overline{\omega}$ on $X'$. Then since $\overline{A}$ is nef, we get that $\overline{A}+\epsilon\cdot\overline{\omega}$ is ample and hence free for any positive rational $\epsilon$. Note that then $(X',\overline{A}+\epsilon\cdot\overline{\omega})$ is an admissible approximation of the big adelic divisor $\overline{D}+\epsilon\cdot\overline{\omega}$. Thus we get by the definition of positive intersection products that 
    \[(\overline{A}+\epsilon\cdot\overline{\omega})^{d-1}\cdot \overline{E}\le \langle(\overline{D}+\epsilon\cdot\overline{\omega})^{d-1}\rangle\cdot \overline{E}\]
    Then letting $\epsilon\to 0$ and noting that positive intersection products are continuous at $\overline{D}$ since it is big (Lemma \ref{lemma:positcont}), we deduce 
    \[\overline{A}^{d-1}\cdot\overline{E}\le \langle\overline{D}^{d-1}\rangle\cdot \overline{E}\]
    Since $(X',\overline{A})$ was an arbitary admissible approximation, taking supremum of the left hand side above across all such approximations easily gives the desired converse inequality and finishes the proof.
\end{proof}
\begin{remark}
    Note that the above lemma shows that we can choose our approximations such that $\overline{A}$ is just nef instead of being free. Hence for the rest of the article, we will freely choose approximations to be either nef or free depending on what is suitable for our argument and this in particular shows almost by definition that for model nef divisors $\overline{D}$, the positive intersection products are the same as usual intersection products.
\end{remark}
As a second corollary of the continuity of positive intersections, we deduce that for nef and big arithmetic adelic divisors, the positive intersection products are the same as usual absolute intersection numbers. 
\begin{corollary}
    \label{corol:positiisusual}
    Suppose $\overline{D}$ is a big and nef arithmetic adelic divisor on $U$ and $\overline{E}$ be any nef arithmetic adelic divisor. Then we have
    \[\langle\overline{\mathcal{D}}^d\rangle\cdot\overline{E}=\overline{D}^d\cdot\overline{E}\]
\end{corollary}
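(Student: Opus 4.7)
The plan is to prove the two inequalities separately, mirroring the classical argument of Boucksom--Favre--Jonsson for nef big line bundles: the upper bound will follow from monotonicity of nef--nef intersection numbers, and the lower bound from a perturbation by an arithmetically ample divisor followed by passage to the limit using continuity of positive intersection products at big divisors (Lemma \ref{lemma:positcont}).

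For the upper bound $\langle\overline{D}^{d-1}\rangle\cdot\overline{E}\le\overline{D}^{d-1}\cdot\overline{E}$, I would invoke Corollary \ref{corol:ekhihai} to rephrase the positive intersection product as a supremum over \emph{nef} admissible approximations $(X',\overline{A})$, where $\pi\colon\mathcal{U}'\to\mathcal{U}$ is birational and $\pi^*\overline{D}\ge\overline{A}$ with both sides nef. Expanding
\[(\pi^*\overline{D})^{d-1}\cdot\pi^*\overline{E}-\overline{A}^{d-1}\cdot\pi^*\overline{E}=(\pi^*\overline{D}-\overline{A})\cdot\Bigl(\sum_{i=0}^{d-2}(\pi^*\overline{D})^i\cdot\overline{A}^{d-2-i}\Bigr)\cdot\pi^*\overline{E}\]
exhibits the difference as a sum of intersections of an effective adelic divisor with nef ones, each of which is non-negative by essentially the same approximation-by-models argument used for Lemma \ref{lemma:nef1}. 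The projection formula then yields $\overline{A}^{d-1}\cdot\pi^*\overline{E}\le\overline{D}^{d-1}\cdot\overline{E}$, and taking supremum over $(X',\overline{A})$ gives the desired inequality.

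For the reverse inequality, I would fix an arithmetically ample model divisor $\overline{\omega}$ on some projective model $X_0$ of $\mathcal{U}$. After reducing (by a preliminary approximation) to the strongly nef case, write $\overline{D}=\lim_j\overline{D}_j$ as a Cauchy sequence of model nef $\mathbb{Q}$-divisors with error rationals $q_j\to 0$ controlled by a boundary divisor $\overline{D}_0$ taken to be nef (possibly after shrinking $\mathcal{U}$, as in the proof of Lemma \ref{lemma:nef1}). For fixed $\epsilon>0$ and $j$ large, the divisor $\overline{A}_{j,\epsilon}:=\overline{D}_j+\epsilon\overline{\omega}-q_j\overline{D}_0$ is model nef (because $\epsilon\overline{\omega}-q_j\overline{D}_0$ is still ample by openness of ampleness, and nef plus ample is ample), and it satisfies $\overline{A}_{j,\epsilon}\le\overline{D}+\epsilon\overline{\omega}$ by the Cauchy bound $\overline{D}_j-q_j\overline{D}_0\le\overline{D}$. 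Hence $(X_j,\overline{A}_{j,\epsilon})$ is an admissible approximation of the big divisor $\overline{D}+\epsilon\overline{\omega}$. Letting $j\to\infty$ using continuity of absolute intersection numbers and then $\epsilon\to 0$ using Lemma \ref{lemma:positcont} on the left and continuity of intersection numbers on the right yields $\langle\overline{D}^{d-1}\rangle\cdot\overline{E}\ge\overline{D}^{d-1}\cdot\overline{E}$.

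The main obstacle I anticipate is verifying that $\overline{A}_{j,\epsilon}$ is genuinely model nef: this hinges on openness of arithmetic ampleness under a small nef perturbation, which is classical but requires care in the adelic setting of a possibly varying projective model $X_j$. The preliminary reduction from general nefness of $\overline{D}$ to the strongly nef case may additionally call for a diagonal approximation argument to interchange the two limit procedures.
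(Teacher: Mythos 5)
Your argument is correct but takes a genuinely different route from the paper's, and it is worth contrasting the two.

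The paper proves the corollary by a single squeeze: after choosing a Cauchy sequence $\{\overline{D}_i\}$ of model nef $\mathbb{Q}$-divisors representing $\overline{D}$, it sandwiches $\langle\overline{D}_i^{\,\bullet}\rangle\cdot\overline{E}$ between $\langle(\overline{D}-q_i\overline{D}_0)^{\bullet}\rangle\cdot\overline{E}$ and $\langle(\overline{D}+q_i\overline{D}_0)^{\bullet}\rangle\cdot\overline{E}$, invokes the continuity Lemma~\ref{lemma:positcont} (which only needs $\overline{D}$ big) to pin the limit to $\langle\overline{D}^{\bullet}\rangle\cdot\overline{E}$, and then appeals to the remark following Corollary~\ref{corol:ekhihai} that for a big \emph{model} nef divisor the positive intersection coincides with the ordinary one, together with the continuity of absolute intersection numbers. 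You instead split the equality into two inequalities: your upper bound $\langle\overline{D}^{\bullet}\rangle\cdot\overline{E}\le\overline{D}^{\bullet}\cdot\overline{E}$ is essentially an explicit proof of the paper's \say{almost by definition} remark, via the telescoping identity $a^{n}-b^{n}=(a-b)\sum a^{i}b^{n-1-i}$ and the positivity of an effective adelic class against nef ones; your lower bound is proved by perturbing with an ample $\epsilon\overline{\omega}$, producing explicit nef approximants $\overline{A}_{j,\epsilon}$ of $\overline{D}+\epsilon\overline{\omega}$, and passing to the double limit $j\to\infty$, $\epsilon\to0$. The paper's route avoids the effective-against-nef positivity input (it is absorbed into the cited remark) and is shorter; your route makes that input explicit and does not need the intermediate observation that positive and ordinary intersections agree for model nef divisors.

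Two points deserve care. First, the positivity of an effective adelic class against $d-1$ nef adelic classes is used but not literally stated as Lemma~\ref{lemma:nef1}; you correctly observe it follows by the same approximation-by-models strategy, but it should be spelled out since the paper never isolates this as a lemma. Second, for the lower bound, the potential obstacle you flag — that pulling an ample $\overline{\omega}$ back along $X_j\to X_0$ only yields a nef and big class — is resolved if you keep both $\overline{\omega}$ and the boundary $\overline{D}_0$ on the base model $X_0$: then for $j\gg 0$ the class $\epsilon\overline{\omega}-q_j\overline{D}_0$ is ample on $X_0$, its pullback to $X_j$ is nef, and $\overline{A}_{j,\epsilon}=\overline{D}_j+\pi^*(\epsilon\overline{\omega}-q_j\overline{D}_0)$ is a genuine model nef $\mathbb{Q}$-divisor on $X_j$. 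With these clarifications your proof is complete; note also that $\epsilon$ should be taken rational to invoke Lemma~\ref{lemma:positcont}.
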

\begin{proof}
    Note that after choosing a common quasi-projective model of $U$, we can assume that $\overline{D}$ is given by a Cauchy sequence $\{\overline{D}_i,q_i\}$ as usual where each $\overline{D}_i$ is model nef. Then we have the effectivity relation
    \[\overline{D}-q_i\overline{D}_0\le \overline{D}_i\le \overline{D}+q_i\overline{D}_0\ \text{for all}\ i\]
    for a sequence of positive rational numbers $q_i$ converging to $0$. Then we have the inequality of positive intersection products
    \[\langle(\overline{D}-q_i\overline{D}_0)^d\rangle\cdot \overline{E}\le \langle\overline{D}_i^d\rangle\cdot\overline{E}\le \langle(\overline{D}+q_i\overline{D}_0\rangle\cdot\overline{E}\]
    Hence we can deduce from Lemma \ref{lemma:positcont} that $\lim_{i\to\infty}\langle\overline{D}_i^d\rangle\cdot \overline{E}=\langle\overline{D}^d\rangle\cdot \overline{E}$ since $\overline{D}$ is big. However since $\{\overline{D}_i\}$ is a sequence of model nef divisors converging to $\overline{D}$ and $\overline{E}$ is nef we have that $\lim_{i\to\infty}\overline{D}_i^d\cdot\overline{E}=\overline{D}^d\cdot\overline{E}$ by how absolute intersection numbers for integrable adelic divisors are constructed in \cite[Prop 4.1.1]{yuan2021adelic}. Now since $\overline{D}_i$ are model nef, by our remark before this Lemma. we have that $\langle\overline{D}_i^d\rangle\cdot \overline{E}=\overline{D}_i^d\cdot \overline{E}$. Putting everything together we conclude
    \[\langle\overline{D}^d\rangle\cdot \overline{E}=\lim_{i\to\infty}\langle\overline{D}_i^d\rangle\cdot\overline{E}=\lim_{i\to\infty}\overline{D}_i^d\cdot\overline{E}=\overline{D}^d\cdot \overline{E}\]
    which finishes the proof.
\end{proof}
We want to show that the intersection products is additive in $\overline{E}$ nef so that we extend the product to integrable $\overline{E}$. For that we first need to show that the family of admissible approximations are filtered under the natural relation of dominance. We essentially use an argument of Chen in \cite{chendiff} but since we work more generally over function fields, we restate it for clarity.
\begin{lemma}
    \label{lemma:free}
    Suppose $X$ is a projective variety over $k$. Suppose $\overline{D}$ is an arithmetic divisor on $X$ and $\overline{A}_1, \overline{A}_2$ are two free $\Q$-arithmetic divisors on $\mathcal{X}$ such that $\overline{D}\ge \overline{A}_i$ for $i=1,2$. Then there is a birational modification $\pi\colon X'\rightarrow X $ and a free arithmetic $\Q$-divisor $\overline{A}$ on $X'$ such that $\overline{A}\ge \pi^*\overline{A}_i$ for $i=1,2$ and $\pi^*\overline{D}\ge \overline{A}$. 
\end{lemma}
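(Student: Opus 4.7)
My plan is to produce $\overline{A}$ on a common birational modification $\pi\colon X'\to X$ as the ``free part'' of a combined linear system of small sections of $\overline{D}$ drawn simultaneously from both $\overline{A}_1$ and $\overline{A}_2$; this is essentially Chen's construction in \cite{chendiff} extended to our more general valued Dedekind base $k$.

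After clearing denominators I assume the $\overline{A}_i$ are integral and that for some positive integer $n$ the bundles $\mathcal{O}(nA_i)$ are generated by small global sections $s^{(i)}_0,\ldots,s^{(i)}_{N_i}$ (in the geometric case the smallness condition is vacuous). Writing $\overline{D}-\overline{A}_i = \overline{E}_i \ge 0$ with canonical section $u_i$ of sup-norm $\le 1$, each product $t^{(i)}_j := s^{(i)}_j \cdot u_i^{n}$ is a small section of $n\overline{D}$. Let $V \subseteq H^0(X,nD)$ be the span of the entire collection $\{t^{(i)}_j\}_{i,j}$, let $\mathfrak{b}\subseteq\mathcal{O}_X$ be its base ideal, and take a projective birational modification $\pi\colon X'\to X$ resolving $\mathfrak{b}$, so that $\pi^{-1}\mathfrak{b}\cdot\mathcal{O}_{X'} = \mathcal{O}_{X'}(-F)$ for an effective Cartier divisor $F$ on $X'$. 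The pull-backs $\pi^*t^{(i)}_j$ globally generate $\mathcal{O}(n\pi^*D-F)$, and I set $\overline{A}:= \pi^*\overline{D}- \tfrac{1}{n}\overline{F}$, equipping $F$ with the Green function which makes $\overline{A}$ carry the Fubini-Study pull-back metric from these generating sections. By construction $\overline{A}$ is free.

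The two comparisons to verify are then the following. For $\pi^*\overline{D}\ge \overline{A}$: the divisor $F/n$ is effective, and smallness of the $t^{(i)}_j$ forces the Fubini-Study weight to be pointwise $\le n\pi^*g_{\overline{D}}$, giving the Green-function inequality. For $\overline{A}\ge \pi^*\overline{A}_i$: the sub-ideal of $\mathfrak{b}$ generated by the single subcollection $\{t^{(i)}_j\}_j$ is exactly $u_i^{n}\cdot\mathcal{O}_X = \mathcal{O}(-nE_i)$, since the $s^{(i)}_j$ alone already generate $\mathcal{O}(nA_i)$ globally; hence $\mathcal{O}(-nE_i) \subseteq \mathfrak{b}$, which forces $F\le \pi^*(nE_i)$ and therefore $A-\pi^*A_i = \pi^*E_i - F/n\ge 0$. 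The Green-function half of this second inequality follows by restricting the sup in the Fubini-Study weight to the $i$-th subcollection, reducing it to $\pi^*g_{\overline{A}_i}$ plus the contribution from $u_i^n$ and then using $\overline{E}_i\ge 0$.

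The main obstacle is the Green-function bookkeeping, which has to work uniformly across the two cases $\Sigma\ne\emptyset$ and $\Sigma=\emptyset$ (with $B$ a smooth projective curve or a field), so that the Fubini-Study pull-back simultaneously delivers semi-positive curvature, smallness of the canonical generating sections, and both comparison inequalities. The geometric case is transparent once the ideal-sheaf inclusion $\mathcal{O}(-nE_i)\subseteq\mathfrak{b}$ is established; the non-archimedean function-field case follows by the same Fubini-Study construction once one uses the small-section formulation of freeness from Definition~\ref{deinition:freemodel}.
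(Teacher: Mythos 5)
The paper's own proof is a one-line citation: the number-field case ($\Sigma\ne\emptyset$) is exactly \cite[Proposition~3.1]{chendiff}, and the geometric case ($\Sigma=\emptyset$) is declared to follow by the same construction after discarding the Hermitian metric data. What you have done is reconstruct that underlying construction, and the core of your argument --- multiplying generating small sections of $n\overline{A}_i$ by the canonical small sections $u_i^n$ of $n\overline{E}_i$, resolving the base ideal $\mathfrak{b}$ of the resulting subspace $V\subseteq H^0(X,nD)$, setting $\overline{A}=\pi^*\overline{D}-\tfrac1n\overline{F}$ with the quotient/Fubini--Study metric, and deducing $F\le n\pi^*E_i$ from the ideal inclusion $\mathcal{O}(-nE_i)\subseteq\mathfrak{b}$ --- is exactly Chen's. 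So the approach is the same as the paper's; you have simply filled in the details.

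One small but real confusion in your closing paragraph: you suggest that ``the non-archimedean function-field case follows by the same Fubini--Study construction once one uses the small-section formulation of freeness.'' In this paper's formalism (Definition~\ref{deinition:freemodel}), the function-field case ($B$ a smooth projective curve) has $\Sigma=\emptyset$ by fiat, so there is \emph{no} metric of any kind attached to an arithmetic divisor, and ``free'' simply means the underlying line bundle is semi-ample --- the ``small-section'' clause is the $\Sigma\ne\emptyset$ branch and does not apply. The function-field case is therefore handled identically to the field case: one drops the Fubini--Study metric entirely, and freeness of $\overline{A}=\pi^*\overline{D}-\tfrac1n F$ amounts to semi-ampleness of $n\pi^*D-F$, which you already have because the $\pi^*t^{(i)}_j$ globally generate it. No non-archimedean metric bookkeeping is needed. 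With that clarification the proof goes through. (A further cosmetic point: to get $\|s_F\|\le1$ pointwise from $\sup\|t^{(i)}_j\|\le1$ you should take the $\max$-normalised Fubini--Study weight rather than the $\ell^2$ one, or absorb the extra $\tfrac1n\log N$ by replacing $n$ with a higher multiple; this is standard but worth saying.)
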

\begin{proof}
    Note that the arithmetic case when $\Sigma$ is non-empty is exactly the content of \cite[Proposition 3.1]{chendiff} and the geometric case follows by the same construction by ignoring the part about Hermitian metrics.
\end{proof}
\begin{lemma}
\label{lemma:positlinear}
For nef adelic divisors $\overline{E}_i$, $1\le i\le n$ and any big adelic divisor $\overline{D}$ living in a common quasi-projective model of $U$, we can find a sequence of positive rational number $q_m$ converging to $0$ and a sequence $(X_m',\overline{A_m})$ of admissible approximations of $\overline{D}+q_mD_0$ such that the following conditions hold
\begin{enumerate} 
    \item $\lim_{m\to\infty} \overline{A_m}^d=\widehat{\emph{vol}}(\overline{D})$
    \item $\lim_{m\to\infty}\overline{A_m}^{d-1}\cdot\overline{E_i}=\langle\overline{D}^{d-1}\rangle\cdot\overline{E_i}$ for all $1\le i\le n$.
\end{enumerate}
In particular the positive intersection products are linear i.e
\[\langle\overline{D}^{d-1}\rangle\cdot(\sum_{i=1}^n\overline{E}_i)=\sum_{i=1}^n\langle\overline{D}^{d-1}\rangle\cdot\overline{E}_i\]
\end{lemma}
\begin{proof}
We begin by noting that it suffices to show the existence of such a sequence. Indeed consider the $n+1$ nef adelic divisors $\overline{E}_i$ for $1\le i\le n$ and $\sum_{i=1}^n\overline{E}_i$. Choose a sequence of model free divisors such that it satisfies the two conditions for the above $n+1$ nef divisors. Clearly the linearity is true for ordinary intersections products of $\overline{A_m}$ against the $\overline{E}_i$. Then we can conclude by taking the limit as $m\to\infty$ and using the property 2 of the sequence.\\
To prove the existence of such a sequence, first assume $(X_j,D_j,q_j)$ is a Cauchy sequence of model divisors converging to $\overline{D}$ with respect to the boundary divisor of $D_0$. Now suppose $(X_{ij}',\overline{A}_{ij})$ are sequences of admissible approximations of $\overline{D}$ for $1\le i\le n$ such that 
\[\lim_{j\to\infty}\overline{A}_{ij}^{d-1}\cdot\overline{E}_i=\langle\overline{D}^{d-1}\rangle\cdot\overline{E_i}\]
for each $i$ and suppose $(X_{0j}',\overline{A}_{0j})$ is a sequence of admissibile approximation such that
\[\overline{A_{0j}}^d\ge\widehat{\text{vol}}(\overline{D})-\frac{1}{j}\]
Note that we can find such sequences for $1\le i\le n$ merely by the definition of positive intersection products and for $i=0$ due to Fujita approximation since we assumed $\overline{D}$ is big and ample divisors are free (see \cite[Theorem 5.2.8]{yuan2021adelic}).
Furthermore by passing to finer models that $X_{ij}'=X_j'$ for $0\le i\le n$ and that there are birational morphisms $\pi_j\colon X_j'\rightarrow X_j$. Then note that for $0\le i\le n$, $\overline{A}_{ij}\le \pi_j^*(D_j+q_jD_0)$. By applying Lemma \ref{lemma:free} repeatedly we can find a birational $\pi_j'\colon X_j''\rightarrow X_j'$ and a free $\mathbb{Q}$-divisor $\overline{A_j}\ge \pi_j'^*\overline{A}_{ij}$ on $X_j''$ for $0\le i\le n$ such that $(\pi_j\circ\pi_j')^*(D_k+q_kD_0)-\overline{A}\ge 0$. Then the effectivity relations $D_j+q_jD_0\le \overline{D}+2q_jD_0$ imply that $(X_j'',\overline{A_j})$ is an admissible approximation of $\overline{D}+2q_jD_0$ over $U$. Since $\overline{A}, \overline{A}_i, \overline{E}_i$ are all nef, from Lemma \ref{lemma:nef1} we have
\[\overline{A_j}^d\ge\overline{A_{0j}}^d\ge \widehat{\text{vol}}(\overline{D})-\frac{1}{j}\ \text{and}\  \overline{A_j}^{d-1}\cdot \overline{E}_i\ge \overline{A}_{ij}^{d-1}\cdot \overline{E}_i\ \text{for}\ i=1,2\]
Hence we deduce 
\[\widehat{\text{vol}}(\overline{D}+2q_jD_0)\ge \overline{A_j}^d\ge \widehat{\text{vol}}(\overline{D})-\frac{1}{j}\ \text{and}\ \langle(\overline{D}+2q_jD_0)^{d-1}\rangle\cdot\overline{E}_i\ge \overline{A_j}^{d-1}\cdot\overline{E}_i\ge \overline{A}_{ij}^{d-1}\cdot\overline{E}_i\ \text{for}\ 1\le i\le n\]
Now using the continuity of volume functions (see \cite[Theorem 5.2.9]{yuan2021adelic} and positive intersection products (see Lemma \ref{lemma:positcont}), taking limits of the above inequality as $j\to\infty$
\[\widehat{\text{vol}}(\overline{D})=\lim_{j\to\infty}\overline{A_j}^d\ \text{and}\ \langle\overline{D}^{d-1}\rangle\cdot \overline{E}_i\ge\lim_{j\to\infty} \overline{A_j}^{d-1}\cdot \overline{E_i}\ge\lim_{j\to\infty} \overline{A_{ij}}^{d-1}\cdot \overline{E_i}=\langle\overline{D}^{d-1}\rangle\cdot \overline{E_i}\ \text{for}\ 1\le i\le n\]
since $q_j\to 0$ as $\to\infty$.
Taking limits of the two above inequalities as $j\to\infty$, we see that the sequence of model free $\Q$-divisors $\{(X_j'',\overline{A_j})\}$ and positive rationals $\{2q_j\}$ does the job.
\end{proof}

\begin{remark}
Note that the linearity described above allows us to extend the definition of positive intersection product of any big adelic divisor $\overline{D}$ with an integrable adelic divisor $\overline{E}$ which we also denote as $\langle\overline{D}^{d-1}\rangle\cdot \overline{E}$.
\end{remark}
 We want obtain the main inequality required for the differentiability where we estimate terms of order at least 2. However for later purpose we want to keep track of the constants involved in the estimates and for that we need an easy lemma which we record next.
\begin{lemma}
    \label{lemma:diffeff}
    Suppose $\overline{E}$ is integrable. Then we can write $\overline{E}=\overline{E}_1-\overline{E}_2$ where $\overline{E}_i$ is nef and effective for $i=1,2$.
\end{lemma}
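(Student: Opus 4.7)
The plan is to reduce to a straightforward manipulation of the definition of integrability together with the boundedness of adelic divisors with respect to the boundary norm. By definition, an integrable adelic divisor $\overline{E}$ admits a decomposition $\overline{E}=\overline{N}_1-\overline{N}_2$ with $\overline{N}_1,\overline{N}_2\in\widehat{\text{Div}}(U,k)_{\text{nef}}$. The issue is only that these two nef divisors are not a priori effective, so the task is to add a single nef and effective correction to each side while preserving nef-ness.

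The key ingredient is the existence of a nef and effective boundary divisor. As already observed earlier in the paper (in the proof of Lemma \ref{lemma:nef1}), after replacing a chosen boundary divisor $\overline{D}_0$ by a model ample divisor $\overline{D}_0'\ge\overline{D}_0$ on a projective compactification and possibly shrinking $\mathcal{U}$, we may assume $\overline{D}_0$ is effective and (model) nef. First I would fix such a boundary divisor on a common quasi-projective model carrying representatives of $\overline{N}_1,\overline{N}_2$.

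Next, since each $\overline{N}_i$ is an adelic divisor, it has finite boundary norm $\|\overline{N}_i\|_{\overline{D}_0}<\infty$, so there exists a positive rational $c$ (one can take $c=\max_i\|\overline{N}_i\|_{\overline{D}_0}+1$) with
\[
-c\,\overline{D}_0\le \overline{N}_i\le c\,\overline{D}_0,\qquad i=1,2.
\]
Setting $\overline{E}_i:=\overline{N}_i+c\,\overline{D}_0$ then gives $\overline{E}_i\ge 0$ by the lower inequality, and $\overline{E}_i$ is nef as a $\mathbb{Q}$-positive sum of nef adelic divisors (nef-ness is closed under positive linear combinations). Finally,
\[
\overline{E}_1-\overline{E}_2=(\overline{N}_1+c\overline{D}_0)-(\overline{N}_2+c\overline{D}_0)=\overline{N}_1-\overline{N}_2=\overline{E},
\]
yielding the claimed decomposition.

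There is no real obstacle in this argument; the only subtle point is the availability of an effective nef boundary divisor, which is already handled by the paper. The passage from the initial (possibly non-effective) nef decomposition to an effective one is entirely controlled by the boundary norm, and this is the point at which we crucially use that $\overline{N}_1,\overline{N}_2$ lie in $\widehat{\text{Div}}(U,k)$ rather than being abstract limits.
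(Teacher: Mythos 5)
Your argument fails at the step ``since each $\overline{N}_i$ is an adelic divisor, it has finite boundary norm $\|\overline{N}_i\|_{\overline{D}_0}<\infty$''. This is not true for general adelic divisors (nef or otherwise): the boundary norm $\|\cdot\|_{\overline{D}_0}$ is only finite for divisors that are ``generically trivial'', \emph{i.e.}, whose restriction to $\mathcal{U}$ vanishes, because the defining condition $-q\overline{D}_0\le \overline{N}_i\le q\overline{D}_0$ forces $\overline{N}_i$ to be supported on the boundary together with $\overline{D}_0$. A nef adelic divisor $\overline{N}_i$ will typically have a nontrivial underlying divisor on $\mathcal{U}$ with components of either sign, and no multiple of the boundary divisor can absorb a negative coefficient on a prime divisor meeting $\mathcal{U}$. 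So the sandwich $-c\overline{D}_0\le\overline{N}_i\le c\overline{D}_0$ is simply unavailable, and $\overline{N}_i+c\overline{D}_0$ need not be effective for any $c$.

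The paper's proof circumvents this by using a different positive correction. For each nef summand $\overline{E}_i'$ one first picks a model lower bound $E_i'$ on a common projective model $X$ (this exists from the Cauchy data: subtract $q_j\overline{D}_0$ from an approximating model divisor). Then Serre's finiteness theorem on the projective variety $X$ produces an \emph{ample} divisor $\overline{A}_0$ (not a multiple of the boundary divisor) such that $E_i'+\overline{A}_0$ is effective. Ampleness is essential here precisely because it is a positivity statement over all of $X$, including $\mathcal{U}$, and so it can dominate the negative parts of $E_i'$ living in the interior. Setting $\overline{E}_i:=\overline{E}_i'+\overline{A}_0$ then gives nef, effective divisors with $\overline{E}_1-\overline{E}_2=\overline{E}$. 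The high-level structure of your argument (decompose into nef pieces, add a common positive divisor to both) is the right idea, but the corrector must be ample rather than boundary-supported, and its existence comes from Serre finiteness rather than finiteness of the boundary norm.
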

\begin{proof}
    As usual we choose a common quasi-projective model $\mathcal{U}$ of $U$ where all the divisors live. By definition of integrable divisors we can choose $\overline{E}_i'$ nef such that $\overline{E}=\overline{E}_1'-\overline{E}_2'$. Choose models $E_1'$ and $E_2'$ on a common projective model $X$ of $U$ such that $\overline{E}_i'\ge E_i'$. Then we can choose an ample divisor $\overline{A}_0$ on $X$ such that $E_i'+\overline{A}_0$ is effective for $i=1,2$ by Serre's finiteness theorem. Then the effectivity relation shows $\overline{E}_i'+\overline{A}_0\ge 0$ and nef and hence we deduce the claim for $\overline{E}_i=\overline{E}_i'+\overline{A}_0$.\\
\end{proof}
Next we state the main inequality required for our differentiability.
\begin{lemma}
\label{lemma:mannewlemm}
 Suppose $\overline{A}$ is a free model $\Q$-divisor on $U$ and suppose $\overline{E}=\overline{E}_1-\overline{E}_2$ is an integrable adelic divisors with $\overline{E}_i$ nef.
 Then we have
\[\widehat{\emph{vol}}(\overline{A}+t\overline{E})\ge \overline{A}^d+(d\overline{A}^{d-1}\overline{E})\cdot t-Ct^2\]
for all rational $0\le t\le 1$ where $C=2^dd^2\overline{\omega}^d$ for any model nef divisor $\overline{\omega}$ such that $\overline{\omega}\ge \overline{A}$ and $\overline{\omega}\ge \overline{E}_i$ for $i=1,2$.
\end{lemma}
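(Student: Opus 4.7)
The plan is to apply Siu's inequality, \cite[Theorem 5.2.2(2)]{yuan2021adelic}, to the decomposition $\overline{A}+t\overline{E}=(\overline{A}+t\overline{E}_1)-t\overline{E}_2$. Since $\overline{A}$ is free (hence nef, by the remark following Definition \ref{deinition:freemodel}) and each $\overline{E}_i$ is nef by hypothesis, both $\overline{A}+t\overline{E}_1$ and $t\overline{E}_2$ are nef for every rational $t\geq 0$, so Siu's inequality would give
\[
\widehat{\text{vol}}(\overline{A}+t\overline{E})\geq (\overline{A}+t\overline{E}_1)^d - d\,(\overline{A}+t\overline{E}_1)^{d-1}\cdot (t\overline{E}_2).
\]

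Next I would expand both intersection numbers by multilinearity. The constant term in $t$ is $\overline{A}^d$, while the linear term collects to $d\,\overline{A}^{d-1}\overline{E}_1 - d\,\overline{A}^{d-1}\overline{E}_2 = d\,\overline{A}^{d-1}\overline{E}$, giving exactly the first two terms of the claimed lower bound. All higher order contributions from $(\overline{A}+t\overline{E}_1)^d$ are nonnegative by Lemma \ref{lemma:nef1}, so they can be dropped. The remaining higher order (negative) terms, coming from the expansion of $d(\overline{A}+t\overline{E}_1)^{d-1}\cdot (t\overline{E}_2)$, are
\[
d\sum_{j=1}^{d-1}\binom{d-1}{j}\, t^{j+1}\,\overline{A}^{d-1-j}\,\overline{E}_1^{\,j}\,\overline{E}_2.
\]

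The final step is to bound this sum by $Ct^2$. The admissibility of $(X',\overline{A})$ gives $\overline{A}\leq \pi^*\overline{D}$, which combined with $\overline{\omega}\geq \overline{D}$ yields $\overline{A}\leq \pi^*\overline{\omega}$; together with $\overline{\omega}\geq \overline{E}_i$, Lemma \ref{lemma:nef1} bounds each mixed intersection $\overline{A}^{d-1-j}\overline{E}_1^{\,j}\overline{E}_2$ by $\overline{\omega}^d$. Since $0\leq t\leq 1$ and $j\geq 1$, each $t^{j+1}\leq t^2$, and the crude estimate $\sum_{j=1}^{d-1}\binom{d-1}{j}\leq 2^{d-1}$ then yields $d\cdot 2^{d-1}\,\overline{\omega}^d\cdot t^2\leq 2^d d^2\,\overline{\omega}^d\cdot t^2 = Ct^2$, which completes the proof. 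The only real obstacle is recognising that Siu's inequality is the right input for a lower bound; everything else is routine binomial bookkeeping, and the stated constant $C$ is actually quite loose compared to what a direct expansion produces, which presumably leaves useful slack for the differentiability argument to come.
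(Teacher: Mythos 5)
Your proposal is correct and follows essentially the same approach as the paper: Siu's inequality applied to the decomposition $\overline{A}+t\overline{E}=(\overline{A}+t\overline{E}_1)-t\overline{E}_2$, binomial expansion of the two intersection numbers, discarding the nonnegative higher-order terms from $(\overline{A}+t\overline{E}_1)^d$, and bounding the negative mixed terms by $\overline{\omega}^d$ via Lemma \ref{lemma:nef1} using $\overline{A}\le\pi^*\overline{D}\le\pi^*\overline{\omega}$ and $\overline{E}_i\le\overline{\omega}$. Your explicit bookkeeping with $\sum_{j\ge 1}\binom{d-1}{j}\le 2^{d-1}$ is a touch cleaner than the paper's informal "count the number of higher-order terms," but it lands on the same (indeed slightly better) bound dominated by $C=2^dd^2\overline{\omega}^d$.
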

\begin{proof}
Choose a model nef divisor $\overline{\omega}$ such that $\overline{\omega}\ge \overline{A}$ and $\overline{\omega}\ge\overline{E}_i$ for $i=1,2$. First note that by Siu's inequality (\cite[Theorem 5.2.2(2)]{yuan2021adelic}) we obtain
\begin{equation}
\label{eqn:newa1}
\widehat{\text{vol}}(\overline{A}+t\overline{E})=\widehat{\text{vol}}(\overline{A}+t\overline{E}_1-t\overline{E}_2)\ge(\overline{A}+t\overline{E}_1)^d-d(\overline{A}+t\overline{E}_1)^{d-1}\cdot(t\overline{E}_2)\ \text{for all positive rational}\ t
\end{equation}
noting that $\overline{A}+t\overline{E}_1$ and $t\overline{E}_2$ are nef when $t>0$. Next we have the binomial expansion of intersection numbers
\begin{equation}
\label{eqn:new3}
(\overline{A}+t\overline{E}_1)^d-d(\overline{A}+t\overline{E}_1)^{d-1}\cdot(t\overline{E}_2)=\overline{A}^d+(d\overline{A}^{d-1}\cdot\overline{E})\cdot t-\sum_{k=1}^{d-1}B_k(d\overline{A}^{d-1-k}\overline{E}_1^k\overline{E}_2)t^{k+1}+\sum_{k\ge 2}C_k(\overline{A}^{d-k}\overline{E}_1^k)t^k
\end{equation}
where $B_k$ and $C_k$ are appropriate binomial coefficients all bounded by $2^d$.
Since by choice we have $\overline{\omega}\ge \overline{A}$ and $\overline{\omega}\ge \overline{E}_i$ for $i=1,2$, by Lemma \ref{lemma:nef1} we can bound each coefficient of $t^k$ in the above sum for $k\ge 2$ by $2^d\overline{\omega}^d$. Since all the terms of the last summand above are positive, ignoring them we have the lower bound 
\begin{equation}
\label{eqn:new4}
(\overline{A}+t\overline{E}_1)^d-d(\overline{A}+t\overline{E}_1)^{d-1}\cdot(t\overline{E}_2)\ge \overline{A}^d+(d\overline{A}^{d-1}\cdot\overline{E})\cdot t-Ct^2\ \text{for all}\ 0\le t \le 1
\end{equation}
by counting the number of such higher order terms in \eqref{eqn:new3} which together with $\eqref{eqn:newa1}$ completes the proof.

\end{proof}
We are ready to state and prove the main theorem of differentiability in this section
\begin{theorem}
\label{theorem:finaldiff}
Suppose $\overline{D}$ is a big adelic divisor and $\overline{E}$ is an integrable adelic divisor on a normal essentially quasi-projective variety $U$ over $k$ of dimension $d-1$. Then the function $t\mapsto \widehat{\emph{vol}}(\overline{D}+t\overline{E})$ is differentiable at $t=0$ with derivative given by 
\[\frac{d}{dt}\widehat{\emph{vol}}(\overline{D}+t\overline{E})\mid_{t=0}=d\cdot\langle\overline{D}^{d-1}\rangle\cdot\overline{E}\]
\end{theorem}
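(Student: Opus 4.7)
The plan is to follow the strategy of Boucksom, Favre and Jonsson, adapted to the adelic setting by replacing their classical inputs with the corresponding Yuan--Zhang counterparts. By Lemma \ref{lemma:positextensionessen} together with the birational invariance of the adelic volume, I may work on a fixed quasi-projective model $\mathcal{U}$ of $U$. Using Lemma \ref{lemma:diffeff}, decompose $\overline{E} = \overline{E}_1 - \overline{E}_2$ with each $\overline{E}_i$ nef and effective, and fix once and for all a model nef divisor $\overline{\omega}$ dominating $\overline{D}$ and both $\overline{E}_i$, in order to make the constant $C$ in Lemma \ref{lemma:mannewlemm} uniform over all admissible approximations and all $t$ in a bounded range. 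Since $-\overline{E}$ is also integrable and $\langle \overline{D}^{d-1} \rangle \cdot (-\overline{E}) = -\langle \overline{D}^{d-1} \rangle \cdot \overline{E}$ by Lemma \ref{lemma:positlinear}, it is enough to establish the one-sided limit
\[
\lim_{t \to 0^+} \frac{\widehat{\text{vol}}(\overline{D}+t\overline{E}) - \widehat{\text{vol}}(\overline{D})}{t} = d\,\langle \overline{D}^{d-1} \rangle \cdot \overline{E}.
\]

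For the $\liminf$ inequality I would fix $\epsilon > 0$ and, combining Fujita approximation \cite[Theorem 5.2.8]{yuan2021adelic} with the filteredness of admissible approximations (Lemma \ref{lemma:free}), produce a \emph{single} admissible approximation $(X',\overline{A})$ of $\overline{D}$ with both $\overline{A}^d \geq \widehat{\text{vol}}(\overline{D}) - \epsilon$ and $\overline{A}^{d-1} \cdot \overline{E}_1 \geq \langle \overline{D}^{d-1} \rangle \cdot \overline{E}_1 - \epsilon$. Since $\overline{A}^{d-1} \cdot \overline{E}_2 \leq \langle \overline{D}^{d-1} \rangle \cdot \overline{E}_2$ holds tautologically from the definition, subtracting gives $\overline{A}^{d-1} \cdot \overline{E} \geq \langle \overline{D}^{d-1} \rangle \cdot \overline{E} - \epsilon$. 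Because $\overline{A} \leq \overline{D}$, monotonicity of the volume yields $\widehat{\text{vol}}(\overline{D}+t\overline{E}) \geq \widehat{\text{vol}}(\overline{A}+t\overline{E})$, and Lemma \ref{lemma:mannewlemm} then produces $\widehat{\text{vol}}(\overline{D}+t\overline{E}) \geq \overline{A}^d + d(\overline{A}^{d-1} \cdot \overline{E})\,t - Ct^2$. Letting $\epsilon \to 0$ for fixed $t$, dividing by $t > 0$, and taking the $\liminf$ as $t \to 0^+$ produces the desired lower bound.

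For the $\limsup$ inequality I would symmetrically approximate $\overline{D}+t\overline{E}$ itself, which is still big for $t$ sufficiently small by continuity of the volume. For each such $t$ and $\epsilon > 0$, pick an admissible approximation $(X'',\overline{B})$ of $\overline{D}+t\overline{E}$ with $\overline{B}^d \geq \widehat{\text{vol}}(\overline{D}+t\overline{E}) - \epsilon$ and $\overline{B}^{d-1} \cdot \overline{E}_2 \geq \langle (\overline{D}+t\overline{E})^{d-1} \rangle \cdot \overline{E}_2 - \epsilon$, which combined with the tautological bound $\overline{B}^{d-1} \cdot \overline{E}_1 \leq \langle (\overline{D}+t\overline{E})^{d-1} \rangle \cdot \overline{E}_1$ gives $\overline{B}^{d-1} \cdot \overline{E} \leq \langle (\overline{D}+t\overline{E})^{d-1} \rangle \cdot \overline{E} + \epsilon$. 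Applying Lemma \ref{lemma:mannewlemm} to $\overline{B}$ as the big divisor and $-\overline{E}$ as the direction gives $\widehat{\text{vol}}(\overline{B}-t\overline{E}) \geq \overline{B}^d - d(\overline{B}^{d-1} \cdot \overline{E})\,t - Ct^2$; since $\overline{B} - t\overline{E} \leq \overline{D}$, the left-hand side is at most $\widehat{\text{vol}}(\overline{D})$, so rearranging and sending $\epsilon \to 0$ gives
\[
\widehat{\text{vol}}(\overline{D}+t\overline{E}) - \widehat{\text{vol}}(\overline{D}) \leq d\,\langle (\overline{D}+t\overline{E})^{d-1} \rangle \cdot \overline{E} \cdot t + Ct^2.
\]
Dividing by $t$ and invoking the continuity of positive intersection products at the big divisor $\overline{D}$ (Lemma \ref{lemma:positcont}) to replace $\langle (\overline{D}+t\overline{E})^{d-1} \rangle \cdot \overline{E}$ by $\langle \overline{D}^{d-1} \rangle \cdot \overline{E}$ in the limit delivers the matching upper bound.

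The main obstacle I anticipate is the simultaneous extraction step: one needs a single admissible approximation that is near-optimal both for the volume supremum (Fujita) and for the relevant positive-intersection supremum against $\overline{E}_1$ or $\overline{E}_2$. This is exactly what the common-dominator construction of Lemma \ref{lemma:free} provides, when combined with the nef-positivity inequality $\overline{A}^{d-1} \cdot \overline{E}_i \geq \overline{A}_0^{d-1} \cdot \overline{E}_i$ for nef $\overline{A} \geq \overline{A}_0$, which follows from Lemma \ref{lemma:nef1}. A secondary technical point is keeping the constant $C$ uniform as both $t$ varies and the approximation changes, which is why the majorant $\overline{\omega}$ is fixed at the outset. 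Both the arithmetic and geometric cases are handled in parallel because every input — Siu's inequality, Fujita approximation, and continuity of intersection numbers — is available on both sides of the Yuan--Zhang formalism.
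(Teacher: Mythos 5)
Your overall architecture matches the paper's: pass to a common quasi-projective model, decompose $\overline{E}=\overline{E}_1-\overline{E}_2$ via Lemma~\ref{lemma:diffeff}, fix a dominating model nef $\overline{\omega}$ to uniformize the constant in Lemma~\ref{lemma:mannewlemm}, feed Fujita approximation and continuity of positive intersections into the two-sided difference quotient, and swap $\overline{E}\leftrightarrow-\overline{E}$ to close the argument. The organization (separate $\liminf$/$\limsup$ instead of the paper's bound-then-flip) is a cosmetic difference.

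There is, however, a genuine gap at exactly the point you flag as "the main obstacle": you assert that Lemma~\ref{lemma:free} lets you produce a \emph{single} admissible approximation $(X',\overline{A})$ of the adelic divisor $\overline{D}$ that is simultaneously $\epsilon$-optimal for $\overline{A}^d$ (Fujita) and for $\overline{A}^{d-1}\cdot\overline{E}_1$. Lemma~\ref{lemma:free} does not do this: it only applies to an \emph{arithmetic divisor on a projective variety}, not to an adelic divisor. To dominate two approximations $\overline{A},\overline{A}'$ of the adelic $\overline{D}$ by a single free $\overline{A}_0$, one must first pass to a model $D_j+q_jD_0$ of $\overline{D}$ (noting $\overline{A},\overline{A}'\le \overline{D}\le D_j+q_jD_0$), apply Lemma~\ref{lemma:free} there, and accept that the resulting $\overline{A}_0$ is only an admissible approximation of the \emph{perturbed} divisor $\overline{D}+2q_j\overline{D}_0$, not of $\overline{D}$ itself. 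One then has to run the entire estimate for $\overline{D}+2q_j\overline{D}_0$, observe the constant $C$ can be chosen uniformly in $j$ (this is why the paper demands $\overline{\omega}\ge\overline{D}+2r\overline{D}_0$ for an a priori bound $r$ on the $q_j$), and finally let $j\to\infty$, invoking continuity of adelic volumes and of positive intersection products (Lemma~\ref{lemma:positcont}) to remove the perturbation. The same issue resurfaces in your $\limsup$ branch when you seek a single $\overline{B}$ near-optimal for both $\overline{B}^d$ and $\overline{B}^{d-1}\cdot\overline{E}_2$. So the claim that admissible approximations of an adelic $\overline{D}$ are filtered is not what Lemma~\ref{lemma:free} says; what is available is an \emph{approximate} filteredness with an error of order $q_j\overline{D}_0$, and the extra limit in $j$ that this forces is precisely the nontrivial step your proposal skips.

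A secondary, minor point: in the $\limsup$ branch the constant produced by Lemma~\ref{lemma:mannewlemm} for $\overline{B}$ requires a nef majorant of $\overline{B}\le\overline{D}+t\overline{E}_1$, so the initially fixed $\overline{\omega}\ge\overline{D},\overline{E}_i$ must be replaced by something like $2\overline{\omega}$ for $0\le t\le 1$; the paper records this explicitly by inflating $C$ to $2^dC$.
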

\begin{proof}
Since volumes and intersection numbers are birational invariants, we harmlessly omit the birational pull-backs by abuse of notation. We begin by choosing a sequence of free model $\Q$-divisors $\{\overline{A_m}\}$ and positive rationals $q_m$ converging to $0$ which satisfies the conditions of Lemma \ref{lemma:positlinear} for the big divisor $\overline{D}$ and nef effective divisors $\overline{E_i}$ for $i=1,2$ such that $\overline{E}=\overline{E_1}-\overline{E_2}$  and we assume $q_m<A$ for all $m$ and for some positive rational $A$.  We choose a model nef divisor $\overline{\omega}\ge \overline{D}+AD_0$ and $\overline{\omega}\ge \overline{E_i}$ for $i=1,2$. We claim that the following inequality holds:
\begin{equation}
    \label{eqn:beshibeshi2}
    \widehat{\text{vol}}(\overline{D}+t\overline{E})-\widehat{\text{vol}}(\overline{D})\ge(d\langle\overline{D}^{d-1}\rangle\cdot\overline{E})\cdot t-Ct^2
\end{equation}
where $C=2^dd^2\overline{\omega}^d$. Indeed since $\overline{\omega}\ge \overline{D}+AD_0\ge \overline{D}+q_mD_0\ge \overline{A_m}$ and $\overline{\omega}\ge \overline{E_i}$, thanks to Lemma \ref{lemma:mannewlemm} we have the inequality
\[\widehat{\text{vol}}(\overline{D}+q_mD_0+t\overline{E})\ge\widehat{\text{vol}}(\overline{A_m}+t\overline{E})\ge \overline{A_m}^d+\overline{A_m}^{d-1}\cdot\overline{E_1}-\overline{A_m}^{d-1}\cdot\overline{E_2}-Ct^2\]
Now taking limits of the above inequality as $m\to\infty$, we obtain the inequality \eqref{eqn:beshibeshi2} noting the property of the sequence $\overline{A_m}$ and linearity of positive intersection products (Lemma \ref{lemma:positlinear}). Replacing $\overline{E}$ by $-\overline{E}$, we obtain the inequality
\begin{equation}
    \label{eqn:beshibeshi3}
    \widehat{\text{vol}}(\overline{D})-\widehat{\text{vol}}(\overline{D}-t\overline{E})\le (d\langle\overline{D}^{d-1}\rangle\cdot\overline{E}+Ct^2\
\end{equation}
We want to apply the inequality \eqref{eqn:beshibeshi2}  with the flips $\overline{D}\longleftrightarrow\overline{D}+t\overline{E}$ and $\overline{E}\longleftrightarrow-\overline{E}$  and the inequality \eqref{eqn:beshibeshi3} with the flips $\overline{D}\longleftrightarrow\overline{D}-t\overline{E}$ and $\overline{E}\longleftrightarrow-\overline{E}$ for $0<t\ll 1$ small enough positive rational such that $\widehat{\text{vol}}(\overline{D}\pm t\overline{E})>0$ which can be obtained by continuity of adelic volumes (\cite{yuan2021adelic}, Theorem 5.2.9).  Note that $2\overline{\omega}\ge \overline{D}+t\overline{E}_1\ge \overline{D}+t\overline{E}$ for $0\le t\le 1$ since $\overline{E}_i$ are effective. Replacing $\overline{E}$ by $-\overline{E}$ we also deduce $2\overline{\omega}\ge \overline{D}-t\overline{E}$ for $0\le t\le 1$. Hence we change the constant $C$ to $2^dd^2(2\overline{\omega})^d=2^d\cdot C$. In other words we have
\begin{equation}
\label{eqn:nek4}
\widehat{\text{vol}}(\overline{D})-\widehat{\text{vol}}(\overline{D}-t\overline{E})\ge(d\langle(\overline{D}-t\overline{E})^{d-1}\rangle\cdot \overline{E})\cdot t-2^dCt^2\ \  \ \ 0\le t\ll 1
\end{equation}
\begin{equation}
\label{eqn:nek5}
\widehat{\text{vol}}(\overline{D}+t\overline{E})-\widehat{\text{vol}}(\overline{D})\le(d\langle(\overline{D}+t\overline{E})^{d-1}\rangle\cdot \overline{E})\cdot t+2^dCt^2\ \ \ \   0\le t\ll 1
\end{equation}
for $C=2^{d}d^2\overline{\omega}^d$. Finally the claim follows from the equations \eqref{eqn:beshibeshi2}, \eqref{eqn:beshibeshi3}, \eqref{eqn:nek4} and \eqref{eqn:nek5} together with continuity described in Lemma \ref{lemma:positcont}.
\end{proof}
We end the section by deducing the \emph{isoperimetric inequality} for adelic divsors which is the analogue of  \cite[Proposition 4.5]{chendiff}).
\begin{corollary}
\label{corol:isoperimetric}
Suppose $\overline{D}$ and $\overline{E}$ are adelic divisors such that $\overline{D}$ is big and $\overline{E}$ is integrable and effective. Then we have 
\[\langle\overline{D}^{d-1}\rangle\cdot \overline{E}\ge\widehat{\emph{vol}}(\overline{D})^{\frac{d-1}{d}}\widehat{\emph{vol}}\cdot(\overline{E})^{\frac{1}{d}}\]
where $d$ is the dimension of any quasi-projective model of $U$.
\end{corollary}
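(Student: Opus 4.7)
The plan is to deduce the isoperimetric inequality as a direct consequence of the differentiability theorem \ref{theorem:finaldiff} combined with the log-concavity (Brunn--Minkowski type inequality) for adelic volumes, which is already available from \cite[Chapter 5]{yuan2021adelic}. The key observation is that both sides of the desired inequality can be viewed as a one-sided derivative at $t=0$ of a function of the parameter $t$ along the direction $\overline{E}$, and so comparing two functions that agree at $t=0$ reduces the matter to a comparison of their derivatives.

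First I would consider the function $f(t) := \widehat{\text{vol}}(\overline{D}+t\overline{E})$ for $t\ge 0$. Since $\overline{E}$ is effective we have $\overline{D}+t\overline{E}\ge \overline{D}$ for $t\ge 0$, so $\overline{D}+t\overline{E}$ remains big and $\overline{E}$ is integrable, which allows me to apply Theorem \ref{theorem:finaldiff} and conclude
\[
f'(0) \;=\; d\,\langle \overline{D}^{\,d-1}\rangle\cdot \overline{E}.
\]
Next I invoke the log-concavity of $\widehat{\text{vol}}$ on the big cone to obtain
\[
\widehat{\text{vol}}(\overline{D}+t\overline{E})^{1/d} \;\ge\; \widehat{\text{vol}}(\overline{D})^{1/d} + t\,\widehat{\text{vol}}(\overline{E})^{1/d}
\]
for all rational $t\ge 0$, and hence defining $g(t) := \bigl(\widehat{\text{vol}}(\overline{D})^{1/d} + t\,\widehat{\text{vol}}(\overline{E})^{1/d}\bigr)^{d}$ we have $f(t)\ge g(t)$ for $t\ge 0$ together with $f(0)=g(0)=\widehat{\text{vol}}(\overline{D})$.

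From this the inequality $f'(0)\ge g'(0)$ follows immediately by taking the limit of the difference quotients from the right, using that $f$ is differentiable at $0$ by Theorem \ref{theorem:finaldiff}. A direct differentiation of $g$ gives
\[
g'(0) \;=\; d\,\widehat{\text{vol}}(\overline{D})^{(d-1)/d}\,\widehat{\text{vol}}(\overline{E})^{1/d},
\]
and combining with the formula for $f'(0)$ above and dividing by $d$ yields precisely the claimed bound
\[
\langle \overline{D}^{\,d-1}\rangle \cdot \overline{E} \;\ge\; \widehat{\text{vol}}(\overline{D})^{(d-1)/d}\,\widehat{\text{vol}}(\overline{E})^{1/d}.
\]
The only potential subtlety, and what I expect to be the main point to verify, is that the log-concavity inequality is available in this quasi-projective adelic setting in the form stated; modulo this, which is part of Yuan--Zhang's basic toolkit for adelic volumes, the argument is a short derivative comparison relying crucially on Theorem \ref{theorem:finaldiff}.
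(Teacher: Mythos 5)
Your proposal is correct and follows essentially the same route as the paper: both apply Theorem \ref{theorem:finaldiff} to identify the derivative of $t\mapsto\widehat{\text{vol}}(\overline{D}+t\overline{E})$ at $t=0$ with $d\,\langle\overline{D}^{d-1}\rangle\cdot\overline{E}$, then invoke log-concavity of adelic volumes (\cite[Theorem 5.2.6]{yuan2021adelic}) to bound this derivative below by that of $\bigl(\widehat{\text{vol}}(\overline{D})^{1/d}+t\,\widehat{\text{vol}}(\overline{E})^{1/d}\bigr)^{d}$. The paper phrases the comparison via a binomial expansion with an $O(t^2)$ tail rather than your explicit $f\ge g$, $f(0)=g(0)$ framing, but the substance is identical.
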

\begin{proof}
By Theorem \ref{theorem:finaldiff} we have
\begin{equation}
    \label{eqn:bishaldrunk}
    \lim_{t\to 0}\frac{\widehat{\text{vol}}(\overline{D}+t\overline{E})-\widehat{\text{vol}}(\overline{D})}{t}=d\cdot\langle\overline{D}^{d-1}\rangle\cdot \overline{E}
\end{equation}
Moreover since $\overline{E}$ is effective, by \cite[ Theorem 5.2.6]{yuan2021adelic} we have
\[\widehat{\text{vol}}(\overline{D}+t\overline{E})\ge (\widehat{\text{vol}}(\overline{D})^{\frac{1}{d}}+t\widehat{\text{vol}}(\overline{E})^{\frac{1}{d}})^d\ \text{for all positive rational}\ t\]
Using the binomial expansion in the above inequality we get
\[\widehat{\text{vol}}(\overline{D}+t\overline{E})-\widehat{\text{vol}}(\overline{D})\ge (d\cdot\widehat{\text{vol}}(\overline{D})^{\frac{d-1}{d}}\cdot \widehat{\text{vol}}(\overline{E})^{\frac{1}{d}})\cdot t+O(t^2)\]
which together with the equation \eqref{eqn:bishaldrunk} yields the claim by taking the limit as $t\to 0$.

\end{proof}
\section{Relation with restricted volumes in the geometric case}
In this section we define a slight variant of the positive intersection product that we defined in the last chapter inspired by the \emph{asymptotic intersection numbers} defined in \cite{asympint}. We show using very similar arguments as in the previous chapter that these intersection numbers are continuous as well on big divisors. As a consequence we obtain a a \say{generalized Fujita approximation} like theorem for restricted volumes in the case that the sub-variety is not contained in the augmented base locus as an adelic version of \cite[Theorem 2.13]{asympint}.\\
We start working in the the geometric setting assuming $U$ is now a \textbf{normal} quasi-projective variety over a field $k$. Furthermore suppose $D$ is an adelic divisor and $E$ is a closed sub-variety of $U$ of dimension $d+1$. Furthermore suppose $E\xhookrightarrow{} U$ be a closed sub-variety. Then we consider a slight variant of the admissible approximations that we considered before.
\begin{definition}
    \label{def:Eadmissble}
    Suppose $D$ and $E$ are as described above. Then we define an \emph{$E$-admissible approximation} of $D$ with respect to $U$ by the data $(U',X',A,\pi)$ where $\pi\colon U'\rightarrow U$ is a birational morphism such that it is an isomorphism over the generic point of $E$, $X'$ is a projective model of $U'$ and $A$ is a nef $\mathbb{Q}$-divisor on $X'$ such that $\pi^*D-A\ge 0$. We generally abbreviate the whole data by $(X',A)$.
\end{definition}
We have defined the \emph{augmented base locus} $B_+(D)$ and the \emph{the restricted volume} $\text{vol}_{U|E}(D)$ along $E$ (see \cite[Def 2.4]{biswas2023convex} and \cite[Def 2.6]{biswas2023convex}). Furthermore we have showed that if $E\nsubseteq B_+(D)$ and $D$ is given by models $\{X_i,D_i\}$ as usual, then we have $\text{vol}_{U|E}(D)=\lim_{i\to\infty}\text{vol}_{X_i|E_i}(D_i)$ where $E_i$ is the Zariski closure of $E$ in $X_i$ and the terms in the right hand side of the equality denote the classical projective restricted volumes as defined in \cite[Definition 2.1]{asympint} viewing $D_i$ as a $\mathbb{Q}$-divisor in $X_i$ (see\cite[Corollary 2.17]{biswas2023convex}). It is easy to check using the Zariski density of $U$ in each $X_i$ that $\text{vol}_{U|E}(D_i)=\text{vol}_{X_i|E_i}(D_i)$ where in the left we view $D_i$ as model adelic divisors. We next define a positive intersection product of a divisor $D$ along $E$.
\begin{definition}
\label{def:asymposit}
Suppose $D$ and $E$ are as described above. We define
\[\langle D^d\rangle\cdot E=\sup_{(X',A)}A^d\cdot \tilde{E}\]
where $(\pi\colon X'\rightarrow X, A)$ varies over all $E$-admissible approximations of $D$ on $U$ and $\tilde{E}=\pi^{-1}(E)$.
\end{definition}
\begin{remark}
Note that since we assume $\pi\colon U'\rightarrow U$ to be an isomorphism over the generic point of $E$, the strict transforms $\tilde{E}$ always have the same Krull dimension as $E$ and hence the intersection numbers appearing in the definition actually make sense. Note further that it is easy to check that the above product is homogenous with respect to positive scaling in $D$ just as for positive intersection products defined above. Hence we can extend the notion of the above product also to $\mathbb{Q}$-adelic divisors.
\end{remark}
We begin by showing that this product is continuous in the sense of Lemma \ref{lemma:positcont}. The proof will be very similar and hence we mention only the crucial points.
\begin{lemma}
\label{lemma:asympcont}
Suppose $D$ is a big adelic divisor and $\overline{F}$ is any adelic divisor on $U$. Then there is a positive integer $m$ depending only on $D$ and $ \overline{F}$ such that 
\[(1-mt)^d\langle D^d\rangle\cdot E\le\langle(D+tF)^d\rangle\cdot E\le(1+mt)^d\langle D^d\rangle\cdot\ E\]
for all rational $\frac{1}{m}>t\ge 0$.
In particular for $t$ rational
\[\lim_{t\to 0}\langle(D+tF)^d\rangle\cdot E=\langle D\rangle^d\cdot E\]
\end{lemma}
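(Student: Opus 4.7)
The plan is to follow the proof of Lemma \ref{lemma:positcont} essentially verbatim, once two basic structural properties of $\langle \overline{D}^d \rangle \cdot E$ are in place: monotonicity in $\overline{D}$ with respect to the effectivity partial order, and homogeneity under positive rational scaling.

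First I would reproduce the Kodaira--Cauchy argument from Lemma \ref{lemma:positcont}. Representing $\overline{D}$ and $\overline{F}$ by sequences $\{D_j\}, \{F_j\}$ on a common tower of projective models $X_j$ with rationals $q_j \to 0$, bigness of $\overline{D}$ combined with \cite[Theorem 5.2.1(2)]{yuan2021adelic} produces some index $j$ for which $D_j - q_j D_0$ is big. Kodaira's lemma then yields an integer $m > 0$ with $m(D_j - q_j D_0) \pm (F_j + q_j D_0) \geq 0$, and chaining with the Cauchy effectivity relations gives $m\overline{D} \pm \overline{F} \geq 0$. This furnishes the effectivity sandwich $(1-mt)\overline{D} \leq \overline{D}+t\overline{F} \leq (1+mt)\overline{D}$ for every rational $0 \leq t < 1/m$.

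Next I would establish the two structural properties. For monotonicity, given $\overline{D}_1 \leq \overline{D}_2$ and an $E$-admissible approximation $(U', X', \overline{A}, \pi)$ of $\overline{D}_1$, one has $\pi^*\overline{D}_2 \geq \pi^*\overline{D}_1 \geq \overline{A}$, so the same data is automatically an $E$-admissible approximation of $\overline{D}_2$ (the subvariety $\tilde{E} = \pi^{-1}(E)$ and the nef class $\overline{A}$ are unchanged); taking the supremum over admissible approximations on both sides yields $\langle \overline{D}_1^d \rangle \cdot E \leq \langle \overline{D}_2^d \rangle \cdot E$. For homogeneity, if $c > 0$ is rational and $(X', \overline{A})$ is $E$-admissible for $\overline{D}$, then $(X', c\overline{A})$ is $E$-admissible for $c\overline{D}$, and $(c\overline{A})^d \cdot \tilde{E} = c^d\, \overline{A}^d \cdot \tilde{E}$, whence $\langle (c\overline{D})^d \rangle \cdot E = c^d \langle \overline{D}^d \rangle \cdot E$.

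Combining these ingredients for $0 \leq t < 1/m$, the effectivity sandwich together with monotonicity and homogeneity yields both bounds $(1-mt)^d \langle \overline{D}^d \rangle \cdot E \leq \langle (\overline{D}+t\overline{F})^d \rangle \cdot E \leq (1+mt)^d \langle \overline{D}^d \rangle \cdot E$, and the limit as $t \to 0$ is then immediate. I do not foresee a serious obstacle: the only point needing a brief check is that each intersection number $\overline{A}^d \cdot \tilde{E}$ is non-negative, so that multiplying by $(1-mt)^d \geq 0$ (which is where the restriction $t < 1/m$ matters) preserves the inequality; this is automatic because $\overline{A}$ is nef on the projective model $X'$ and $\tilde{E}$ is effective. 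The whole argument is a formal transcription of Lemma \ref{lemma:positcont} to the variant intersection product, which is presumably why the author will mention only the crucial points.
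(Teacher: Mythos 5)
Your proposal matches the paper's proof essentially line for line: the paper likewise invokes the Kodaira–Cauchy argument from the proof of Lemma \ref{lemma:positcont} to produce $m$ with $m\overline{D}\pm\overline{F}\ge 0$, then uses monotonicity of $\langle\cdot^d\rangle\cdot E$ under the effectivity order together with positive homogeneity (already remarked after Definition \ref{def:asymposit}) to read off both bounds from the sandwich $(1-mt)\overline{D}\le\overline{D}+t\overline{F}\le(1+mt)\overline{D}$ for rational $0\le t<1/m$. Your brief verification of monotonicity and homogeneity directly from the definition of $E$-admissible approximations is exactly the "crucial points" the paper alludes to but leaves implicit.
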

\begin{proof}
Note first that since $D$ is big, using the same argument as in the beginning of the proof of Lemma \ref{lemma:asympcont} we can find a positive integer $m$ depending on $D$ and $F$ such that $mD\pm F\ge 0$. Next note that it is easy to check from the definition of the positive intersection that if $D_1\le D_2$ are two adelic divisors then $\langle D_1^d\rangle\cdot E\le \langle D_2^d\rangle\cdot E$. Now noting the sequence of effectivity relations 
\[(1-mt)D\le D+tF\le(1+mt)D\ \text{for all positive rational}\ t\]
and noting the homogneity of positive intersections with positive scaling we obtain the claim readily for all rational $0\le t<\frac{1}{m}$.

\end{proof}
Next suppose $D$ is a $\mathbb{Q}$-divisor on a projective model $X$ of $U$ and suppose $\overline{E}$ is the Zariski closure of $E$ in $X$. Then we can consider the \emph{asymptotic intersection number} $||D^d\cdot \overline{E}||$ as defined in \cite[Definition 2.6]{asympint} Furthermore using the alternate description of this product when $\overline{E}\nsubseteq \overline{B_+}(D)$ as described in  \cite[Prop 2.11]{asympint} and our definition in \ref{def:asymposit}  we have $\langle D\rangle^d\cdot E=||D^d\cdot \overline{E}||$ provided $\overline{E}\nsubseteq \overline{B_+}(D)$. Here in the left we consider $D$ as an model adelic $\mathbb{Q}$-divisor and $\overline{B_+}(D)$ denotes the classical augmented base locus as defined in \cite[Section 1]{asympint}.  With this observation and using the continuity we show that positive intersection at a big adelic divisor is given by limit of positive intersections of models after perturbing them a little by the boundary divisor.
\begin{lemma}
\label{lemma:metaconv}
Suppose $D$ is a big adelic divisor given by a sequence $\{X_i,D_i,q_i\}$ as usual and furthermore suppose that $E\nsubseteq B_+(D)$. Then we have 
\[\langle D^d\rangle\cdot E=\lim_{i\to\infty}\langle(D_i+q_iD_0)^d\rangle\cdot E=\lim_{i\to\infty}||(D_i+q_iD_0)^d\cdot \overline{E}_i||\]
where $\overline{E}_i$ is the Zariski closure of $E$ in $X_i$.
\end{lemma}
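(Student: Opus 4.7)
The plan is to prove the two equalities in turn, both relying in an essential way on Lemma \ref{lemma:asympcont} (monotonicity and continuity of the positive intersection product along $E$) and on the identification, recalled just before the statement, of $\langle\overline{D}^d\rangle\cdot E$ with the classical asymptotic intersection number of \cite{asympint} in the case of model adelic $\mathbb{Q}$-divisors.

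For the first equality, I would begin by extracting from the Cauchy condition for $\{\overline{D}_j\}$, upon passing to the boundary-topology limit in $i$, the pair of effectivity relations
\[\overline{D}_j - q_j\overline{D}_0 \le \overline{D} \le \overline{D}_j + q_j\overline{D}_0,\]
which rearrange to
\[\overline{D} \le \overline{D}_j + q_j\overline{D}_0 \le \overline{D} + 2q_j\overline{D}_0.\]
Since the positive intersection along $E$ is monotone in its divisor argument (as noted in the proof of Lemma \ref{lemma:asympcont}), this propagates to the sandwich
\[\langle\overline{D}^d\rangle\cdot E \;\le\; \langle(\overline{D}_j + q_j\overline{D}_0)^d\rangle\cdot E \;\le\; \langle(\overline{D} + 2q_j\overline{D}_0)^d\rangle\cdot E.\]
Applying Lemma \ref{lemma:asympcont} to the big divisor $\overline{D}$ with $\overline{F} = \overline{D}_0$ and $t = 2q_j \to 0$, the upper bound tends to $\langle\overline{D}^d\rangle\cdot E$, and a squeeze gives the first equality.

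For the second equality, the strategy is to invoke, for each $j$, the term-by-term identification
\[\langle(\overline{D}_j + q_j\overline{D}_0)^d\rangle\cdot E \;=\; \|(D_j + q_jD_0)^d\cdot \overline{E}_j\|,\]
valid for the model adelic $\mathbb{Q}$-divisor $\overline{D}_j + q_j\overline{D}_0$ on $X_j$ so long as $\overline{E}_j \not\subseteq \overline{B_+}(D_j + q_jD_0)$ in $X_j$; taking the limit in $j$ then closes the argument.

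The main obstacle is verifying this last non-containment for all sufficiently large $j$ from the hypothesis $E \not\subseteq B_+(\overline{D})$. For this I would combine the classical equivalence between $\overline{E}_j \not\subseteq \overline{B_+}(D_j + q_jD_0)$ and positivity of the restricted volume $\text{vol}_{X_j|\overline{E}_j}(D_j + q_jD_0)$ from \cite{asympint} with the limit formula $\widehat{\text{vol}}_{U|E}(\overline{D}) = \lim_j \text{vol}_{X_j|E_j}(D_j)$ from \cite{biswas2023convex}: the hypothesis forces $\widehat{\text{vol}}_{U|E}(\overline{D}) > 0$ and hence $\text{vol}_{X_j|E_j}(D_j) > 0$ for large $j$. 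Since the boundary divisor $\overline{D}_0$ may be taken effective (as explained in the proof of Lemma \ref{lemma:nef1}), monotonicity of classical restricted volumes in the divisor argument then upgrades this to $\text{vol}_{X_j|\overline{E}_j}(D_j + q_jD_0) > 0$, which is exactly the required non-containment.
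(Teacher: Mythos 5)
Your argument for the first equality coincides with the paper's: both extract the sandwich $\overline{D} \le \overline{D}_j + q_j\overline{D}_0 \le \overline{D} + 2q_j\overline{D}_0$ from the Cauchy data, propagate it through the monotonicity of $\langle\cdot^d\rangle\cdot E$, and apply Lemma~\ref{lemma:asympcont} with $\overline{F} = \overline{D}_0$ and $t \to 0$.

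For the non-containment $\overline{E}_i \nsubseteq \overline{B_+}(D_i + q_iD_0)$ needed for the second equality, you take a genuinely different and considerably heavier route than the paper. The paper's proof is a one-liner: $\overline{D} \le \overline{D}_i + q_i\overline{D}_0$ implies $B_+(\overline{D}_i + q_i\overline{D}_0) \subseteq B_+(\overline{D})$ by monotonicity of the augmented base locus, so $E \nsubseteq B_+(\overline{D}_i + q_i\overline{D}_0)$ for all $i$, and Zariski density of $U$ in $X_i$ transfers this to the classical $\overline{B_+}(D_i + q_iD_0)$. Your route instead passes through positivity of the restricted volume: you infer $\widehat{\text{vol}}_{U|E}(\overline{D}) > 0$, push that to positivity of $\text{vol}_{X_j|\overline{E}_j}(D_j + q_jD_0)$ for large $j$, and then invoke the implication \say{positive restricted volume $\Rightarrow$ not contained in $\overline{B_+}$}, which is precisely Theorem~(C) of~\cite{asympint} --- the result the paper explicitly flags as deep (proved via separation of jets and not known to extend to the adelic setting). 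Your reasoning is correct, and you are careful to note that $E$ avoids the boundary so the monotonicity of restricted volumes actually applies, but it imports much more machinery than the direct monotonicity of $B_+$ and only yields the non-containment for large $j$ rather than all $i$ (harmless, since one takes limits, but a sign the argument is less sharp). The paper's route is the one to prefer here: it is shorter, avoids the hard jet-separation theorem entirely, and is more in the spirit of the rest of the section, where the restricted volume is the quantity being computed rather than an input.
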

\begin{proof}
Note first that since $D\le D_i+q_iD_0$ we have that $B_+(D_i+q_iD_0)\subseteq B_+(D)$ which in particular implies that $E\nsubseteq B_+(D_i+q_iD_0)$. Now using Zariski density of $U$ in each $X_i$ we conclude that $\overline{E}_i\nsubseteq \overline{B_+}(D_i+q_iD_0)$ now looking at the models as $\mathbb{Q}$-divisors on projective varieties. Hence we conclude from the discussion before the lemma that $\langle(D_i+q_iD_0)^d\rangle\cdot E=||(D_i+q_iD_0)^d\cdot \overline{E}_i||$ for all $i$. Thus we just need to  prove the first equality in the above claim. Note that the effectivity relations $D\le D_j+q_jD_0\le D+2q_jD_0$
yields the inequalities 
\[\langle D^d\rangle\cdot E\le \langle(D_i+q_iD_0)^d\rangle\cdot E\le \langle(D+2q_jD_0)^d\rangle\cdot E\]
by the second point listed in the proof of the previous lemma. This then evidently yields the claim by the continuity and as $q_j\to 0$ by noting that $D$ is assumed big.
\end{proof}
Finally we can state the main theorem of this section which is a variant of Fujita approximation for restricted volumes.
\begin{theorem}
\label{theorem:useless}
Suppose $D$ is a big adelic divisor and suppose $E$ is a closed sub-variety of $U$ such that $E\nsubseteq B_+(D)$. Then we have 
\[\emph{vol}_{U|E}(D)=\langle D^d\rangle\cdot E\]
\end{theorem}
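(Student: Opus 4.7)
The plan is to chain together Lemma \ref{lemma:metaconv}, the classical generalized Fujita approximation for restricted volumes in \cite{asympint}, and the compatibility between adelic and projective restricted volumes noted just before Definition \ref{def:asymposit}. Fix a sequence of models $\{X_i, D_i, q_i\}$ representing $\overline{D}$ as usual and denote by $\overline{E}_i$ the Zariski closure of $E$ in $X_i$. Since $E \nsubseteq B_+(\overline{D})$, Lemma \ref{lemma:metaconv} already gives
\[\langle \overline{D}^d \rangle \cdot E = \lim_{i \to \infty} \|(D_i + q_i D_0)^d \cdot \overline{E}_i\|,\]
so the problem is reduced to identifying this limit with $\widehat{\text{vol}}_{U|E}(\overline{D})$.

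For the identification on each model, I would first observe that $\overline{D}_i + q_i \overline{D}_0 \ge \overline{D}$ implies $B_+(\overline{D}_i + q_i \overline{D}_0) \subseteq B_+(\overline{D})$, so $E \nsubseteq B_+(\overline{D}_i + q_i \overline{D}_0)$; by Zariski density of $U$ in $X_i$ this translates into $\overline{E}_i \nsubseteq \overline{B_+}(D_i + q_i D_0)$ when $D_i + q_i D_0$ is viewed as a $\mathbb{Q}$-divisor on the projective variety $X_i$. Theorem 2.13 of \cite{asympint} therefore applies to yield $\|(D_i + q_i D_0)^d \cdot \overline{E}_i\| = \text{vol}_{X_i | \overline{E}_i}(D_i + q_i D_0)$, and this equals $\widehat{\text{vol}}_{U|E}(\overline{D}_i + q_i \overline{D}_0)$ via the identification of model adelic restricted volumes with classical projective restricted volumes from \cite{biswas2023convex}.

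To pass back to the adelic limit I would use the Cauchy effectivity chain $\overline{D} \le \overline{D}_i + q_i \overline{D}_0 \le \overline{D} + 2 q_i \overline{D}_0$ together with monotonicity of restricted volume in the divisor, giving
\[\widehat{\text{vol}}_{U|E}(\overline{D}) \le \widehat{\text{vol}}_{U|E}(\overline{D}_i + q_i \overline{D}_0) \le \widehat{\text{vol}}_{U|E}(\overline{D} + 2 q_i \overline{D}_0).\]
Letting $i \to \infty$ so that $q_i \to 0$, and invoking continuity of the adelic restricted volume at the big divisor $\overline{D}$ from \cite{biswas2023convex}, both outer terms converge to $\widehat{\text{vol}}_{U|E}(\overline{D})$, hence so does the middle. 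Combining with the first paragraph proves the theorem.

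The step I expect to be the main obstacle is not the chain of equalities itself but ensuring that the continuity of the adelic restricted volume along the boundary perturbation $\overline{D} \rightsquigarrow \overline{D} + 2 q_i \overline{D}_0$ is actually available in the form needed. If it is not stated in exactly that form in \cite{biswas2023convex}, it should be obtainable by an adelic Siu-type inequality analogous to the one used in the absolute case in Section 3, coupled with the very Lemma \ref{lemma:metaconv} we rely on; the remaining ingredients—behavior of $B_+$ under effective perturbations and Zariski density of $U$ in the projective models—are standard.
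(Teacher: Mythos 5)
Your proof is correct and follows essentially the same route as the paper. The only minor difference is in the final limit step: where you set up a sandwich using monotonicity and appeal to a continuity statement for the adelic restricted volume (and flag this as a possible obstacle), the paper instead invokes Corollary 2.17 of \cite{biswas2023convex} directly, which already asserts that for $E \nsubseteq B_+(\overline{D})$ the adelic restricted volume is the limit $\lim_i \text{vol}_{X_i|\overline{E}_i}(D_i)$ along any Cauchy sequence of models representing $\overline{D}$; applying this to the Cauchy sequence $\{D_i + q_i D_0\}$ gives the identification in one stroke, resolving the concern you raised.
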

\begin{proof}
We noted in the proof of the previous lemma that $\overline{E}_i\nsubseteq \overline{B_+}(D_i+q_iD_0)$ for all $i$. Hence by \cite[Theorem 2.13]{asympint} we conclude that $\text{vol}_{X_i|\overline{E}_i}(D_i+q_iD_0)=||(D_i+q_iD_0)^d\cdot \overline{E}_i||$. Furthermore since $E\nsubseteq B_+(D)$ we have shown earlier that $\text{vol}_{U|E}(D)=\lim_{i\to\infty}\text{vol}_{X_i|\overline{E}_i}(D_i+q_iD_0)$ since $\{D_i+q_iD_0\}$ is a Cauchy sequence of models representing $D$. Thus combing the above with Lemma \ref{lemma:metaconv} we deduce the claim of the theorem.
\end{proof}
\begin{remark}
Note that when $D$ is a nef $\mathbb{Q}$-divisor we have $\langle D\rangle^d\cdot E=D^d\cdot E=\text{vol}_{U|E}(D)$ and hence the above theorem states that for general enough sub-varieties $E$ with respect to $D$, the restricted volume is \say{approximated} by restricted volume of nef divisors dominated by it which is analogous to the usual Fujita approximation for volumes. A point to note here is that just like we assume the divisor to be big for usual Fujita approximation \emph{i.e} we require $\text{vol}(D)>0$, we require $E\nsubseteq B_+(D)$ for restricted Fujita approximation to hold above and that readily implies $\text{vol}_{U|E}(D)>0$. A natural question to ask would be whether the converse is true \emph{i.e} whether $\text{vol}_{U|E}(D)>0\Rightarrow E\nsubseteq B_+(D)$. This is always the case in the projective setting which is obtained as the main theorem in \cite[Theorem (C)]{asympint}. Unfortunately the proof heavily uses cohomological methods via separation of jets and it is not clear how to extend these methods in the quasi-projective setting.
\end{remark}
\section{Fundamental inequality in the Function Field case}
In this section, we derive a version of the fundamental inequality for arithmetic adelic divisors analogous to \cite[Lemma 5.3.4]{yuan2021adelic}. The result is only stated in the case $K$ is a number field in \cite{yuan2021adelic} and the proof uses Minkowski's techniques from geometry of lattices. Since over a function field of one variable, there are no Archimedean places these methods can not be applied. However we will use the more general theory of adelic curves as in \cite{chenmoriwaki}. Note that given a function field $K$ of a smooth projective curve $B$ over an arbitary field $k$, we can impose the structure of an adelic curve on it. More precisely, it is done by considering the set of closed points of $B$ with the discrete $\sigma$-algebra as a measure space where each singleton $\{v\}$ for a closed point $v$ is endowed with mass $[k(v):k]$. We furthermore impose that the base field $k$ is countable. This is done to ensure that the set of closed points $v$ is countable and thus the discrete $\sigma$-algebra generated by singletons consists of all subsets of $\Sigma$. Thus we can safely ignore any  questions of measurability of functions defined on $\Sigma$. This structure makes $K$ into a proper adelic curve in the sense of \cite{chenmoriwaki}. Furtermore we denote the set of places over $K$ equipped with the usual normalisation by $\Sigma$.\\
Given arithmetic adelic divisor $\overline{D}\in \widehat{\text{Div}}(U,k)$ with generic fiber $D\in \widehat{\text{Div}}(U,K)$, we can associate a metric $|\cdot|_v$ and correspondingly a sup-norm $||\cdot||_v$ over the Berkovich analytification $U_v$. Note that since $U_v$ is not necessarily compact, these sup.norms might be infinite. However we concentrate only on those sections which take finite sup-norm and we define
\[V_m^{\overline{D}}:=\{s\in H^0(U,mD)\mid ||s||_v<\infty\ \text{for all}\ v\in \Sigma,\ ||s||_v\le 1\ \text{for almost all}\ v\in \Sigma\}\]
When the adelic divisor $\overline{D}$ is clear we denote this graded linear series simply by $\{V_m\}$. Then each $V_m$ comes with a family of norms $\{||\cdot||_v\}_{v\in\Sigma}$ for each $v\in \Sigma$ and we denote this data by $\overline{V_m}$ which forms a \emph{norm family} on $V_m$ in the sense of \cite[Section 4.1.1]{chenmoriwaki}. We begin by showing that each $\overline{V_m}$ becomes an \emph{adelic vector bundle} over $K$ with its structure of an adelic curve in the sense of Chen and Moriwaki (\cite[Def 4.1.28]{chenmoriwaki}).
\begin{lemma}
    \label{lemma:vectorbundleache}
    Suppose $\overline{D}\in\widehat{\emph{Div}}(U,k)$ is an arithmetic adelic divisor and $\overline{V_*}$ be its associated graded linear series along with the norm families $\{||\cdot||_{\overline{D},v,m}\}$ on each graded piece $V_m$. Then each norm family $\overline{V_m}$ is an adelic vector bundle on $K$.
\end{lemma}
\begin{proof}
    We begin by noting that the function $v\mapsto ||s||_{\overline{D},v,m}$ is measurable for all $s\in V_m$ since $k$ and hence $\Sigma$ is countable. Moreover by definition we have $||s||_{\overline{D},v,m}\le 1$ for almost all $v$ which clearly shows that the function $v\mapsto \log ||s||_{\overline{D},v,m}$ is upper dominated.\\
    For the upper dominancy of the dual metric, after choosing a quasi-projective model $\cal{U}$ of $U$ we can find  a model divisor $\overline{E}$ on some projective model of $\cal{U}$ such that $\overline{D}\le \overline{E}$. This induces an inclusion of vector spaces
    \[f\colon V_m=H^0(U,mD)\xhookrightarrow{} H^0(U,mE)=E_m\]
    Furthermore if we equip $V_m$ and $E_m$ with their corresponding sup-norms $||\cdot||_{\overline{D},v,m}$ and $||\cdot||_{\overline{E},v,m}$ at each place $v$, then the effectivity relation $\overline{D}\le \overline{E}$ shows that the operator norm of the induced map $f_v$ has $||f_v||\le 1$. Then there is an induced surjective dual map 
    \[f^{\wedge}\colon E_m^{\wedge}\rightarrow V_m^{\wedge}\]
    and furthermore we have by \cite[Prop 1.1.22]{chenmoriwaki} that $||f_v^{\wedge}||\le ||f_v||\le 1$. Thus for any $s\in V_m^{\wedge}$, we deduce 
    \begin{equation}
        \label{equa:lewra}
        \log||s||_{\overline{D},v,m}^{\wedge}\le \log||s'||_{\overline{D},v,m}^{\wedge}
    \end{equation}
    for any $s'\in E_m^{\wedge}$ with $f^{\wedge}(s')=s$. Since $\overline{E}$ is a model divisor, we know by the projective case in \cite[Theorem 6.1.13]{chenmoriwaki} that the function $v\mapsto \log||s'||_{\overline{E},v,m}$ is upper dominated which together with \eqref{equa:lewra} finishes the proof.
\end{proof}
Next we show that the graded linear series $\{V_m\}$ associated to an arithmetic adelic divisor $\overline{D}$ with a big generic fiber $D$ has the same geometric volume as of the full series of $D$.
Note that given a graded linear series $\{V_m\}$ of a geometric adelic divisor $\widehat{\text{Div}}(U,K)$, we can define its volume as
\[\text{vol}(V_m):=\limsup_{m\to\infty}d!\cdot \frac{\text{dim}_K(V_m)}{m^d}\]
Then we can state our next lemma which shows that those sections which have finite sup-norm and \say{local coherence} actually grow just as fast as the full series.
\begin{lemma}
    \label{lemma:ampleser}
    Suppose $\overline{D}\in \widehat{\emph{Div}}(U,k)$ is an arithmetic adelic divisor with generic fiber $D$ and suppose $\{V_m\}$ is its associated graded linear series. Thne we have
    \[\emph{vol}(V_m)=\emph{vol}(D)\]
\end{lemma}
\begin{proof}
   We begin by choosing a quasi-projective model of $U$. Then clearly we can choose a sequence of model divisors $\overline{D_i}$ which converges to $\overline{D}$ in the boundary topology and satisfies $\overline{D_i}\le \overline{D}$ for each $i$ and let us denote by $V_m^i=H^0(U,mD_i)$. Then clearly there is an inclusion of vector space $V_m^i\xhookrightarrow{} H^0(U,mD)$ for each $i$ and $m$. Moreover the effectivity $\overline{D_i}\le \overline{D}$ implies that for each $s\in V_m^i$, we have 
   \[|s|_{\overline{D},v,m}\le |s|_{\overline{D_i},v,m} \]
   for each $i$ and $m$. Since each $\overline{D_i}$ is a model divisor, it is known from the coherence condition that for any $s\in V_m^i$, we have $||s||_{\overline{D_i},v,m}\le \infty$ for all $v$ and  $||s||_{\overline{D_i},v,m}\le 1$ for almost all $v$. Thus the above inequality implies that $||s||_{\overline{D},v,m}\le \infty$ for all $v$ and $||s||_{\overline{D},v,m}\le 1$ for almost all $v$. As $s\in V_m^i$ was arbitary, we easily deduce that $V_m^i\xhookrightarrow{} V_m$ for all $m$. Thus we can deduce that \[\text{vol}(V_m^i)=\text{vol}(D_i)\le \text{vol}(V_m)\le \text{vol}(D)\]
   for each $i$. However since $\{D_i\}$ converges to $D$ in the boundary topology, we know from \cite[Theorem 5.2.9]{yuan2021adelic} that $\lim_{i\to\infty}\text{vol}(D_i)=\text{vol}(D)$ which clearly concludes the proof.
\end{proof}
We recall the definition of the essential minima of an arithmetic adelic divisor, denoted by $\widehat{\mu}_{\text{ess}}(\cdot)$ as
    \[\widehat{\mu}_{\text{ess}}(\overline{\mathcal{D}}):=\sup_V\inf_{x\in V(\overline{K})}h_{\overline{\mathcal{D}}}(x)\]
    where $V$ ranges over all proper non-empty Zariski open subsets of $U$. Then we can deduce the geometric analogue of \cite[Lemma 5.3.4]{yuan2021adelic}.
\begin{theorem}
    \label{theorem:fundamentalinequalityfunctionfield}
    Suppose $\overline{D}\in\widehat{\emph{Div}}(U,k)$ is an arithmetic adelic divisor with big generic fiber $D$. Then we have
    \[\widehat{\mu}_{\emph{ess}}(\overline{D})\ge \frac{\widehat{\emph{vol}}(\overline{D})}{(d+1)\emph{vol}(D)}\]
\end{theorem}
\begin{proof}
    Suppose we denote by $\overline{V_m}$ the norm family associated to the arithmetic adelic divisor $m\overline{[D}$ which we have shown to be an adelic vector bundle in Lemma \ref{lemma:vectorbundleache}. Furthermore suppose we denote by $\widehat{h^0}(\overline{V_m})$ the $k$-dimension of the space of small sections in $\overline{V_m}$ and by $h^0(V_M)$ the $K$-dimension of the space of all sections $V_m$. Then by \cite[Theorem 4.3.23]{chenmoriwaki} we have
    \begin{equation}
        \label{equa:fundament1}
        \widehat{h^0}(\overline{V_m})-\widehat{\text{deg}}_+(\overline{V_m})\le C\cdot h^0(V_m)
    \end{equation}
    where $\widehat{\text{deg}}_+(\overline{V_m})$ is the positive degree of the adelic vector bundle $\overline{V_m}$ as defined in \cite[Section 4.3.4]{chenmoriwaki}, $C:=\max\{g(C)-1,1\}$ and $g(B)$ is the genus of $B$. Furthermore note that since all the norms $||\cdot||_{\overline{D},v,m}$ are ultrametric, we get by \cite[Prop 4.3.44]{chenmoriwaki} that 
    \[\widehat{\text{deg}}_+(\overline{V_m})=\sum_{i=1}^{h^0(V_m)}\max\{\widehat{\mu_i}(\overline{V_m}),0\}\]
    where $\widehat{\mu_i}(\overline{V_m})$ denotes the $i$-th jumping numbers of the Harder-Narasimhan filtration associated to the adelic vector bundle $\overline{V_m}$. Estimating each of the jumping numbers from above with the largest one in Equation \eqref{equa:fundament1} we get 
    \begin{equation}
        \label{equa:fundament2}
        \widehat{h^0}(\overline{V_m})-h^0(V_m)\cdot \widehat{\mu_1}(\overline{V_m})\le C\cdot h^0(V_m)
    \end{equation}
    Dividing Equation \eqref{equa:fundament2} by $m^{d+1}$ and taking limits, we obtain
    \begin{equation}
        \label{equa:fundament3}
        \frac{\widehat{\text{vol}}(\overline{D})}{(d+1)!}-\frac{\text{vol}(D)}{d!}\cdot\widehat{\mu}^{\text{asy}}_{\text{max}}(\overline{D})\le 0
    \end{equation}
Here we have used the fact that $\lim_{m\to\infty}\frac{\widehat{h^0}(\overline{V_m})}{m^{d+1}}=\frac{\widehat{\text{vol}}(\overline{D})}{(d+1)!}$ which is the very definition of the arithmetic volume and $\lim_{m\to\infty}\frac{h^0(V_m)}{m^d}=\frac{\text{vol}(D)}{d!}$ which is a consequence of Lemma \ref{lemma:ampleser}. Hence we can finally deduce from Equation \eqref{equa:fundament3} that
\begin{equation}
    \label{equa:fundament4}
    \widehat{\mu}^{\text{asy}}_{\text{max}}(\overline{D})\ge \frac{\widehat{\text{vol}}(\overline{D})}{(d+1)\text{vol}(D)}
\end{equation}
Finally we can deduce the claim of the Lemma by noting the inequality $\widehat{\mu}_{\text{ess}}(\overline{D})\ge \widehat{\mu}^{\text{asy}}_{\text{max}}(\overline{D})$. Note that in the projective case this is obtained in \cite[Prop 6.4.4]{chenmoriwaki} and in the quasi-projective case, the arguments can be identically used once we notice that the graded series of normed families $\{\overline{V_m}\}$ are all adelic vector bundles.
\end{proof}
\section{Application to Equidistribution}
In this section, we simultaneously consider the arithmetic and geometric case to obtain a differentiability result for \emph{asymptotic slopes} following the arguments given by Chen for arithmetic divisors on a projective arithmetic variety (see \cite[Proposition 5.1]{chendiff}). As a principal application of it we deduce an equidistribution result for big divisors and we show that it generalises the equidistribution result obtained by Yuan and Zhang in \cite[Theorem 5.4.3]{yuan2021adelic}.\\
For the rest of the section, we work over a quasi-projective variety $U$ over $K$ where $K$ is a number field or a function field of one variable over some field $F$. Note that both the cases are included in the general formalism of valued Dedekind schemes introduced in section 2. Note that then we can view $U$ as an essentially quasi-projective variety over $k=(B,\Sigma)$ in both the cases where $B=\text{Spec}(\mathbb{Z})$ when $K$ is a number field and $B=\mathbb{P}_F^1$ when $K$ is a function field of one variable over $F$ and our considerations in sections 1 and 2 go through in this case. Note that then we have the group of arithmetic adelic divisors on $U$ over $k$ denoted by $\widehat{\text{Div}}(U,k)$ and the group of geometric adelic divisors on $U$ over $K$ which we denote by $\widehat{\text{Div}}(U,K)$. We denote the arithmetic adelic divisors by $\overline{D}$ and the geometric adelic divisors by $D$ for sake of clarity. Furthermore we have the \say{generic fiber} map $\widehat{\text{Div}}(U,k)\rightarrow \widehat{\text{Div}}(U,K)$ as described in \cite[Section 2.5.5]{yuan2021adelic}. Furthermore $d$ will always denote the dimension of $U$ and since we are working assuming that $K$ is either a number field or a function field of one variable this means that any quasi-projective model of $U$ will always have dimension $d+1$.
\begin{remark}
    Suppose $\overline{D}\in\widehat{\text{Div}}(U,k)$ is a big arithmetic adelic divisor. Then from \cite[Theorem 5.2.1(2)]{yuan2021adelic} we can deduce that there is a projective model $\mathcal{X}$ of $U$ and an arithmetic divisor $\overline{D}_1$ such that $\overline{D}_1\le \overline{D}$ in $\widehat{\text{Div}}(U,k)$ and $\overline{D}_1$ is big. Then from \cite[Prop 6.4.18]{chenmoriwaki} we can deduce that the generic fiber $D_1$ of $\overline{D}_1$ is big and $D_1\le D$ in $\widehat{\text{Div}}(U,K)$ where $D$ is the generic fiber of $\overline{D}$. Hence we deduce that the generic fiber $D$ is always big whenever $\overline{D}$ is arithmetically big.   
\end{remark}
For any big arithmetic adelic divisor $\overline{D}\in\widehat{\text{Div}}(U,k)$, we can then define the \say{asymptotic positive slope} $\hat{\mu}_+^{\pi}(\overline{D})$ as
\[\hat{\mu}_+^{\pi}(\overline{D})=\frac{\widehat{\text{vol}}(\overline{D})}{(d+1)\text{vol}(D)}\]
 It is defined in the projective setting by Chen (see \cite[Section 5]{chendiff}) working over number fields.
We derive as a corollary of our differentiability results before that this slope function is differentiable when $\overline{D}$ is big. 
\begin{corollary}
    \label{coroll:asympslope}
    Suppose $\overline{D}$ is a big arithmetic adelic divisor and $\overline{M}$ an integrable arithmetic adelic divisor in $\widehat{\emph{Div}}(U,k)$. Then we have
     \[\lim_{t\to 0}\frac{\hat{\mu}_+^{\pi}(\overline{D}+t\overline{M})-\hat{\mu}_+^{\pi}(\overline{D})}{t}=\frac{\langle\overline{D}^d\rangle\cdot\overline{M}}{\emph{vol}(D)}-\frac{d\langle D^{d-1}\rangle\cdot M}{\emph{vol}(D)}\hat{\mu}_+^{\pi}(\overline{D})\]
\end{corollary}
\begin{proof}
    The proof is quite straightforward by applying the following elementary principle for Analysis 1. If $f(t),g(t)$ are two differentiable functions on $\mathbb{R}$, then the quotient $\frac{f(t)}{g(t)}$ is also differentiable at a point $t_0$ such that $g(t_0)\neq 0$ and the derivative at $t_0$ is given by $\frac{g(t_0)f'(t_0)-f(t_0)g'(t_0)}{g(t_0)^2}$. We apply the above to the functions $f(t)=\widehat{\text{vol}}(\overline{D}+t\overline{M})$ and $g(t)=\widehat{\text{vol}}(D+tM)$ which are both differentiable by Theorem \ref{theorem:finaldiff} to deduce the claim since $\mu_+^{\pi}(\overline{D}+t\overline{M})=\frac{1}{(d+1)}\frac{f(t)}{g(t)}$.
\end{proof}
\begin{remark}
    Note that there is a notion of \say{vector valued heights} as introduced in \cite[Section 5.3.1]{yuan2021adelic} and in the case of a number field or function field in one variable, by composing with the Arakelov degree on $\widehat{\text{Pic}}(K)$ we get the notion of a real valued height. When the arithmetic adelic divisor $\overline{D}$ is also nef in addition to satisfying the hypotheses of Corollary \ref{coroll:asympslope} then it can be checked easily using \cite[Theorem 5.2.2(1)]{yuan2021adelic} that the above asymptotic positive slope is exactly equal to $h_{\overline{D}}(U)$. However it might not be true that the perturbations $\overline{D}+t\overline{M}$ are still nef for very small $t$ and hence we cannot deduce from Corollary that the height functions from \cite{yuan2021adelic} are differentiable at big points.
\end{remark}
We want to use the differentiability of the asymptotic positive slope to study the well known \say{variational principle}. We fix a place $v$ over $K$ and we denote by $U_v$ the $K_v$-analytic space associated to $U\times_K K_v$ where $K_v$ is the completion of $K$ at $v$. Note that as explained in \cite[Chapter 1]{berkovich}, they are the the usual complex analytic spaces associated to $U\times_K K_v$ when $v$ is complex Archimedean and the quotient of the usual complex analytic spaces of $U\times_K K_v$ by complex conjugation when $v$ is real Archimedean and the Berkovich spaces studied in \cite{berkovich} when $v$ is non-Archimedean. For a geometric point $x\in U(\overline{K})$, we consider the discrete measure \[\eta_x:=\frac{1}{\#O(\tilde{x})}\sum_{y\in O(\tilde{x})}\delta_y\]
on $U_v$ where $\tilde{x}$ denotes the induced closed point on $U\times_K \overline{K_v}$ from $x$, $O(\tilde{x})$ denotes the Galois orbit of $x$ under the action of $\text{Gal}(\overline{K_v}/K_v)$ and $\delta_y$ is the Dirac measure concentrated at $y$.\\
Now we consider a sequence of points $\{x_m\}$ of geometric points in $U(\overline{K})$ and we recall two definitions of sequence of points crucial in stating equidistribution conjectures. Recall that given an arithmetic adelic divisor $\overline{D}$ on $\mathcal{U}$, there is real valued height function $h_{\overline{D}}(\cdot)\colon U(\overline{K})\rightarrow \mathbb{R}$ as explained in \cite[Section 5.3.1]{yuan2021adelic}.
\begin{definition}
    We call a sequence of geometric points $\{x_m\}$ in $U(\overline{K})$ \emph{generic} if the sequence is dense in the Zariski topology of $U$ \emph{i.e} for all non-empty Zariski open subset $V$ of $U$, $x_m\in V(\overline{K})$ for large enough $m$. Given an arithmetic adelic divisor $\overline{D}$, we call $\{x_m\}$ is \emph{small} w.r.t $\overline{D}$ if $\lim_{m\to\infty}h_{\overline{D}}(x_m)=\mu_+^{\pi}(\overline{D})$
\end{definition}
\begin{remark}      
    Note that the definition of a small sequence above is slightly different from the usual definition of a small sequence in literature (or \emph{directionally small} as termed by Yuan and Zhang, see \cite[Section 5.4.1]{yuan2021adelic}). However when $\overline{D}$ is nef then our definition agrees withe classical cone since then $\mu_+^{\pi}(\overline{D})=h_{\overline{D}}(U)$. 
\end{remark}
 The equidistribution theorems and conjectures deal with the problem of determining conditions on the sequence $\{x_m\}$ and $\overline{D}$ such that the sequence of integrals $\int_{U_v} g\ d\eta_{x_m}$ converges to some real number in $\mathbb{R}$ as $m\to\infty$ for every continuous function $g$ on $U_v$ with compact support.\\
 In \cite[Theorem 3.6.4]{yuan2021adelic}, Yuan and Zhang introduce the notion of \say{compactified Green functions} as
 \[C^0_{\text{cptf}}(U_v):=\{f\in C^0(U_v)\mid \frac{f}{g_0}\to 0\ \text{along the boundary}\}\]
 where $g_0$ is the Green function of any boundary divisor. Theorem \cite[Theorem 3.6.4]{yuan2021adelic} tells us that for any $g\in C^0_{\text{cptf}}(U_v)$ there is an adelic divisor $\mathcal{O}(g)$ in  $\widehat{\text{Div}}(U,k)$. Furthermore construction identifies $C^0_{\text{cptf}}(U_v)$ with a subgroup of $\widehat{\text{Div}}(U,k)$ and contains the subgroup of compactly supported continuous functions $C_c^0(U_v)$ on $U_v$. The following description of the height function of $\mathcal{O}(g)$ follows easily by construction of intersection numbers when $\mathcal{O}(g)$ is integrable and allows us to relate the measures $\eta_{x_m}$ with the integrals $\int_{U_v} g\ d\eta_{x_m}$  
 \begin{equation}
     \label{eqn:heightdescription}
      h_{\mathcal{O}(g)}(x_m)=\int_{U_v} g\ d\eta_{x_m}
 \end{equation}
 We want use the variational principle and obtain our equidistribution from differentiability in the direction of the divisor $\mathcal{O}(g)$ but $\mathcal{O}(g)$ is not integrable in the sense of Yuan and Zhang for any arbitary $g$. However there are a particular class of functions for which the associated divisor is integrable and they approximate any arbitary function with compact support under the uniform topology which we introduce next.
 \begin{definition}
     \label{def:model}
     Suppose $g\in C^0(U_v)$ is a continuous function on $U_v$. We say that $g$ is a \emph{model function} in either of the two following cases
     \begin{enumerate}
         \item $v$ is Archimedean and $g$ is the restriction of a smooth function on some projective model of $U_v$.
         \item $v$ is non-Archimedean and there is a projective model $\mathcal{X}$ of $U\times_B \emph{Spec}(O_{B,v})$  over the local ring $O_{B,v}$ of $B$ at $v$ and a vertical Cartier divisor $\mathcal{A}$ on $\mathcal{X}$ such that a positive multiple of $g$ is the function induced by $\mathcal{A}$ on $U_v$ via restriction.
     \end{enumerate}
     We denote the group of model functions on $U_v$ by $C_{\emph{mod}}(U_v)$.
 \end{definition}
 Note that any model function $f$ is bounded and hence in particular $C^0_{\text{mod}}(U_v)\subseteq C^0_{\text{cptf}}(U_v)$. We go on to show that the adelic divisors $\mathcal{O}(g)$ are integrable in the sense of Yuan and Zhang when $g$ is model.
 \begin{lemma}
     \label{lemma:modelintegrable}
     Suppose $g\in C_c^0(U_v)$ is a smooth model function with compact support. Then the associated arithmetic adelic divisor $\mathcal{O}(g)$ is integrable.
 \end{lemma}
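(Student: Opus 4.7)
The plan is to exhibit $\mathcal{O}(g)$ as the difference of two arithmetically nef adelic divisors in $\widehat{\text{Div}}(U,k)$. Since the class of integrable adelic divisors is closed under sums and differences, it suffices to handle separately the Archimedean and non-Archimedean cases of Definition \ref{def:model}. In both cases the idea is the same: absorb the \say{vertical perturbation} $\mathcal{O}(g)$ at the place $v$ by adding a sufficiently positive model divisor.

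In the non-Archimedean case, let $\mathcal{A}$ be the vertical Cartier divisor on some projective model $\mathcal{X}$ over $\text{Spec}(O_{B,v})$ such that $Ng$ is induced by $\mathcal{A}$ for some positive integer $N$. After shrinking $B$ around $v$ and applying standard spreading-out, I may assume $\mathcal{X}$ is the localisation of a projective model of $U$ over $B$, still denoted $\mathcal{X}$. Write $\mathcal{A} = \mathcal{A}_+ - \mathcal{A}_-$ as a difference of effective vertical Cartier divisors supported over $v$. Fix a Cartier divisor $\mathcal{H}$ on $\mathcal{X}$ which is ample over $B$, and invoke Serre's theorem to choose $M \gg 0$ so that $M\mathcal{H} + \mathcal{A}_\pm$ are both ample over $B$. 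Equipping these ample model divisors with any smooth semi-positive Archimedean Green functions gives two arithmetically nef model adelic divisors $\overline{\mathcal{N}}_\pm \in \widehat{\text{Div}}(U,k)$ whose difference equals $N\cdot\mathcal{O}(g)$, proving integrability after dividing by $N$.

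In the Archimedean case, let $\tilde{g}$ be a smooth function on some projective model $\mathcal{X}_v$ restricting to $g$. Fix a projective model $\mathcal{X}$ of $U$ over $k$ together with an arithmetically ample adelic divisor $\overline{\mathcal{H}}$ whose Archimedean Green function $g_H$ is smooth with strictly positive curvature form at $v$ and non-negative curvature at all other Archimedean places. By compactness of $\mathcal{X}_v$ and positive-definiteness of $i\partial\bar{\partial} g_H$, there exists $N \gg 0$ such that $i\partial\bar{\partial}(Ng_H + \tilde{g}) \geq 0$ on $\mathcal{X}_v$. Then $N\overline{\mathcal{H}} + \mathcal{O}(g)$ is arithmetically nef (ample underlying divisor, semi-positive Green function at every Archimedean place, unchanged elsewhere), and the identity
\[\mathcal{O}(g) = \bigl(N\overline{\mathcal{H}} + \mathcal{O}(g)\bigr) - N\overline{\mathcal{H}}\]
realises $\mathcal{O}(g)$ as a difference of nef adelic divisors.

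The principal technical subtlety lies in the non-Archimedean argument: one must check that the local projective model over $O_{B,v}$ extends to a global model over $B$ on which the vertical decomposition $\mathcal{A} = \mathcal{A}_+ - \mathcal{A}_-$ remains Cartier, and that a single ample $\mathcal{H}$ can be chosen uniformly. These are standard consequences of flat descent and Serre's theorem but require care because $\mathcal{A}$ is only defined near $v$. A minor secondary point is that \emph{integrability} in the sense of Yuan--Zhang is defined via the Cauchy completion, but the $\overline{\mathcal{N}}_\pm$ we construct are model divisors, so their difference lies in $\widehat{\text{Div}}(U,k)_{\text{int}}$ tautologically.
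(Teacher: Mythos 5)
Your proof is correct and follows the same strategy as the paper: absorb the perturbation at $v$ by adding a large multiple of an ample model divisor, realising $\mathcal{O}(g)$ as a difference of arithmetically nef adelic divisors, with Serre's theorem in the non-Archimedean case and compactness of $\mathcal{X}_v$ plus strict positivity of the reference curvature form in the Archimedean case. The only structural deviation is your intermediate decomposition $\mathcal{A}=\mathcal{A}_+-\mathcal{A}_-$ before invoking Serre, which the paper bypasses by applying Serre directly to $\mathcal{A}$ (choose $\mathcal{A}_2$ ample so that $\mathcal{A}+\mathcal{A}_2$ is ample); you also correctly flag the spreading-out step from a model over $O_{B,v}$ to a global model over $B$, a point the paper's proof leaves implicit when it passes directly to a projective model of $U$ over $k$.
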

 \begin{proof}
     We first assume the case when $v$ is non-archimedean. Then since $g$ is model, be definition there is a projective model $\mathcal{X}$ of $U$ over $k$ with generic fiber $X$ and a vertical divisor $\mathcal{A}$ on $\mathcal{X}$ such that $g$ is induced by the model function of $\mathcal{A}$. Here we identify $g$ as a continuous function on $X_v$ and $U_v$ by extending it to 0 outside its support. Then using Serre's theorem we can write $\mathcal{A}=\mathcal{A}_1-\mathcal{A}_2$ where $\mathcal{A}_1$ and $\mathcal{A}_2$ are ample divisors on $\mathcal{X}$. Then clearly writing $\mathcal{O}(g)=\mathcal{O}(g_1)-\mathcal{O}(g_2)$ where $g_1$ and $g_2$ denotes the induced model functions from $\mathcal{A}_1$ and $\mathcal{A}_2$ respectively, we conclude that $\mathcal{O}(g)$ is integrable.\\
     For Archimedean $g$, we give a sketch of the argument. Like before we choose a projective model $\mathcal{X}$ of $U$ with generic fiber $X$. Since $g$ is smooth, $\mathcal{O}(g)$ induces a smooth Chern form on $X_v$. Choose an ample divisor $A$ on $X$ with usual Fubiny-Study form whic h has positive curvature and adding high enough powers of it to $\mathcal{O}(g)$, we get that $\mathcal{O}(g)+mA$ also has positive curvature. Then writing $\mathcal{O}(g)=\mathcal{O}(g)+mA-mA$ we deduce the claim. We refer to \cite[Prop 10.4]{localcansaviour} and \cite[Section 2.1.1]{yuan2021adelic} for more general versions and details.
 \end{proof}
 \begin{remark}
     To highlight the importance of the model functions,  we remark here that these functions approximate arbitary continuous functions on $U_v$ with compact support (looking at them as continuous functions on the analytification of some projective model of $U$) under the uniform topology. Indeed this is the famous Weierstrass approximation theorem in the Archimedean case and was proved by Gubler in \cite[Theorem 7.12]{saviourpiecewiselinear} in the non-archimedean case over algebraically closed fields. Gubler's arguments were adapted by Yuan \cite[Section 3]{bigyuan} for the non-algebraically closed case. In particular this shows that the functions considered in Lemma \ref{lemma:modelintegrable} approximate arbitary continuous functions uniformly in both the cases of number fields and function fields and this fact will be important to us to reduce the case of weak convergence to model situations. Furthermore note that the space of model functions with compact support form a subgroup of $\widehat{\text{Div}}(U,k)$ under the inclusion into $C^0_{\text{cptf}}(U_v)$ since model functions are always bounded and we denote this subgroup of $\widehat{\text{Div}}(U,k)$ by $C^0_{\text{mod}}(U_v)$.
 \end{remark}
 Our strategy to attack the equidistribution problem is identical to the strategy in \cite[Section 5.1]{chendiff} but we still repeat the arguments for clarity. Given a sequence of geometric points $\{x\}:=\{x_m\}_{m\in\mathbb{N}}$ in $U(\overline{K})$, we begin by defining the following function 
 \[\phi_{\{x\}}\colon \widehat{\text{Div}}(U,k)\rightarrow \mathbb{R}\cup \{\pm \infty\}\]
 \[\overline{D}\mapsto \liminf_{m\to\infty} h_{\overline{D}}(x_m)\]
 The additivity of $h_{\overline{D}}(\cdot)$ on  geometric points easily shows that $\phi_{\{x\}}(\cdot)$ is a super-additive function on $\widehat{\text{Div}}(U,k)$. Next we record two important properties of $\phi_{\{x\}}(\cdot)$ that will be relevant in our future considerations. Recall that a sequence of Radon measures $\{d\eta_m\}$ on $U_v$ is said to \emph{weakly converge} to another Radon measure $d\eta$ if $\lim_{m\to\infty}\int_{U_v}g\ d\eta_m=\int_{U_v}g\ d\eta$ for all $g\in C_c^0(U_v)$. If the limit measure is not specified and the integrals converge to some real number, then we say the sequence $\{d\eta_m\}$ converges weakly.
 Given a big arithmetic adelic divisor $\overline{D}$, we can define the sub-semigroup
 \[C(U, \overline{D}):=\{m\overline{D}+\mathcal{O}(f)\mid m\in\mathbb{N}, \ m\overline{D}+\mathcal{O}(f)\ \text{is big and}\ f\in C^0_{\text{mod}}(U_v)\}\]
 of $\widehat{\text{Div}}(\mathcal{U},k)$.\\
 Recall that given a sub-semigroup $C$ and a subgroup $H$ of an abelian group $G$, we say that $C$ is \emph{open with respect to} $H$ if for all $g\in C$ and $h\in H$, we have that $mg+h\in C$ for all large enough positive integers $m$. It is easy to see that $C(U,\overline{D})$ is open with respect to the sub-group $C^0_{\text{mod}}(U_v)$ which will be crucial to use \cite[Proposition 5.4]{chendiff}. Indeed given $g\in C^0_{\text{mod}}(U_v)$ and $f\in C^0_{\text{mod}}(U_v),\ m\in\mathbb{N}$ such that $m\overline{D}+\mathcal{O}(f)\in C(U, \overline{D})$ , for large enough positive integer $n$ we have $n(m\overline{D}+\mathcal{O}(f))+\mathcal{O}(g)=mn\overline{D}+\mathcal{O}(nf+g)\in C(U, \overline{D})$ since $nf+g\in C^0_{\text{mod}}(U_v)$ for all positive integers $n$, the adelic volume function $\widehat{\text{vol}}(\cdot)$ is positive homogeneous and continuous and $m\overline{D}+\mathcal{O}(f)$ itself is big. Next we record two lemmas which will be useful later.
 \begin{lemma}
     \label{lemma:derlink}
       Suppose $\overline{D}\in\widehat{\emph{Div}}(U,k)$ and $\{x\}:=\{x_m\}$ be a sequence of geometric points in $U(\overline{K})$ such that the sequence $\{h_{\overline{D}}(x_m)\}$ is convergent. For any $f\in C^0_{\emph{cptf}}(U_v)$, the following are equivalent
       \begin{itemize}
           \item The sequence $\{\int_{U_v}f\ d\eta_{x_m})\}$ converges as $m\to\infty$.
           \item The function $\phi_{\{x\}}(\cdot)$ is differentiable at $\overline{D}$ in the direction $\mathcal{O}(f)$.
       \end{itemize}
       Furthermore if one of the conditions hold, then
        \[\lim_{m\to\infty}\int_{U_v}f\ d\eta_{x_m}=\phi_{\{x\},{\overline{D}}}'(f)\]
         where $\phi_{\{x\},{\overline{D}}}'(f)$ denotes the differential of $\phi_{\{x\}}$ at $\overline{D}$ calculated at $\mathcal{O}(f)$.
 \end{lemma}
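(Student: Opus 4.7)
The strategy is to exploit identity \eqref{eqn:heightdescription} together with the linearity of the height pairing in its divisor argument, which reduces the question to elementary arithmetic of $\liminf$ and $\limsup$. Since the map $\overline{\mathcal{D}}'\mapsto h_{\overline{\mathcal{D}}'}(x_m)$ is a group homomorphism on $\widehat{\text{Div}}(U,k)_{\mathbb{Q}}$ for every fixed $m$, combining it with \eqref{eqn:heightdescription} applied to $\mathcal{O}(f)$ records, for every rational $t$, the identity
\[h_{\overline{\mathcal{D}}+t\mathcal{O}(f)}(x_m)=h_{\overline{\mathcal{D}}}(x_m)+t\int_{U_v} f\, d\eta_{x_m}.\]
This identity is the bridge between the measures $\eta_{x_m}$ and the function $\phi_{\{x\}}$, and the whole proof is essentially an unwinding of it.

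Setting $L:=\lim_{m\to\infty}h_{\overline{\mathcal{D}}}(x_m)=\phi_{\{x\}}(\overline{\mathcal{D}})$, which exists by hypothesis, and using that perturbing by a convergent sequence does not affect $\liminf$ or $\limsup$, I would then deduce
\[\phi_{\{x\}}(\overline{\mathcal{D}}+t\mathcal{O}(f))=L+\liminf_{m\to\infty}\!\left(t\int_{U_v} f\, d\eta_{x_m}\right).\]
For $t>0$ the right-hand side equals $L+t\liminf_{m}\int_{U_v} f\, d\eta_{x_m}$; for $t<0$, since multiplication by a negative scalar swaps $\liminf$ and $\limsup$, it equals $L+t\limsup_{m}\int_{U_v} f\, d\eta_{x_m}$. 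Dividing by $t$ then shows that the one-sided directional derivatives of $\phi_{\{x\}}$ at $\overline{\mathcal{D}}$ in the direction $\mathcal{O}(f)$ are in fact constant in $t$ and equal to
\[\lim_{t\to 0^+}\frac{\phi_{\{x\}}(\overline{\mathcal{D}}+t\mathcal{O}(f))-\phi_{\{x\}}(\overline{\mathcal{D}})}{t}=\liminf_{m\to\infty}\int_{U_v} f\, d\eta_{x_m},\]
\[\lim_{t\to 0^-}\frac{\phi_{\{x\}}(\overline{\mathcal{D}}+t\mathcal{O}(f))-\phi_{\{x\}}(\overline{\mathcal{D}})}{t}=\limsup_{m\to\infty}\int_{U_v} f\, d\eta_{x_m}.\]

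Differentiability of $\phi_{\{x\}}$ at $\overline{\mathcal{D}}$ in direction $\mathcal{O}(f)$ is by definition the coincidence of these two one-sided derivatives, which is exactly the equality of $\liminf$ and $\limsup$ of the sequence $\{\int_{U_v} f\, d\eta_{x_m}\}$, i.e. its convergence; in that case the common value coincides simultaneously with $\phi'_{\{x\},\overline{\mathcal{D}}}(f)$ and with $\lim_m \int_{U_v} f\, d\eta_{x_m}$, giving the asserted equality.

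The one delicate point that I would address at the outset is justifying both \eqref{eqn:heightdescription} and the additivity $h_{\overline{\mathcal{D}}'+\mathcal{O}(f)}(x_m)=h_{\overline{\mathcal{D}}'}(x_m)+h_{\mathcal{O}(f)}(x_m)$ for an arbitrary $f\in C^0_{\text{cptf}}(U_v)$ rather than only for a model $f$. This should follow from the continuity of both sides with respect to the boundary topology on $\widehat{\text{Div}}(U,k)$, combined with the density of model functions in $C^0_{\text{cptf}}(U_v)$ that is essentially built into the construction of $\mathcal{O}(f)$ as an adelic divisor in \cite[Theorem 3.6.4]{yuan2021adelic}.
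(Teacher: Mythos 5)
Your proof is correct and follows essentially the same route as the paper: both arguments hinge on the additivity identity $h_{\overline{\mathcal{D}}+t\mathcal{O}(f)}(x_m)=h_{\overline{\mathcal{D}}}(x_m)+t\int_{U_v} f\,d\eta_{x_m}$ and the observation that a convergent summand passes through $\liminf$, and both conclude by replacing $f$ with $-f$ (equivalently, comparing the $t\to 0^+$ and $t\to 0^-$ derivatives). The paper phrases it with integer multiples $n\overline{\mathcal{D}}+\mathcal{O}(f)$ rather than increments $\overline{\mathcal{D}}+t\mathcal{O}(f)$, but the underlying computation is identical; your explicit one-sided-derivative formulation makes the equivalence of differentiability with equality of $\liminf$ and $\limsup$ slightly more transparent.
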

 \begin{proof}
     The proof of the equivalence is identical to the proof of \cite[Theorem 5.3]{chendiff} but we still sketch a proof for clarity. The series of equalities that is crucial for the proof is
     \begin{equation}
         \label{equa:withoutme}
         \phi_{\{x\}}(n\overline{D}+\mathcal{O}(f))=\liminf_{m\to\infty}(nh_{\overline{D}}(x_m)+\int_{U_v}f\ d\eta_{x_m})=n\phi_{\{x\}}(\overline{D})+\phi_{\{x\}}(\mathcal{O}(f))
     \end{equation}
     for all positive integers $n$ and $f\in C^0_{\text{cptf}}(U_v)$.
    The first equality above easily follows from the additivity of the height function on the geometric points. Now suppose that the first assertion holds. Then the second equality easily follows since the second summand in the middle term of equality \eqref{equa:withoutme} is convergent as $m\to\infty$. Replacing $f$ by $-f$ in the above inequality clearly shows that the function $\phi_{\{x\}}$ is differentiable at $\overline{D}$ along the direction of $f$. In fact it shows that it is linear in the direction of $f$.\\
    Conversely if we assume that the second assertion holds, then also the second equality of the equation \eqref{equa:withoutme} holds but this time since the \textbf{first} summand of the middle term in equation \eqref{equa:withoutme} is convergent.  The differentiability of $\phi_{\{x\}}$ then allows us to deduce by taking limits as $n\to\infty$ that 
    \[\phi_{\{x\},\overline{D}}'(\mathcal{O}(f))=\phi_{\{x\}}(\mathcal{O}(f))\]
    where the left hand-side above denotes the differential of $\phi_{\{x\}}$ at $\overline{D}$ in the direction of $\mathcal{O}(f)$. Since differentials are linear maps, replacing $\mathcal{O}(f)$ by $\mathcal{O}(-f)$ we deduce from the above equality that 
    \[\phi_{\{x\}}(\mathcal{O}(-f))=-\phi_{\{x\}}(\mathcal{O}(f))\]
    Recalling the definition of the function $\phi_{\{x\}}$ we deduce that \[\limsup_{m\to\infty}\int_{U_v}f\ d\eta_{x_m}=\liminf_{m\to\infty}\int_{U_v}f\ d\eta_{x_m}= \phi'_{\{x\},\overline{D}}(\mathcal{O}(f))\]
    which proves the first assumption. Note that equation \eqref{equa:withoutme} also shows the last additional claim.
 \end{proof}
  \begin{lemma}
     \label{lemma:derlink1}
     If $\{x\}$ is a generic sequence in $U(\overline{K})$, then we have 
         \[\phi_{\{x\}}(\overline{D})\ge \mu_+^{\pi}(\overline{D})\]
         for big arithmetic adelic divisors $\overline{D}$ in $\widehat{\emph{Div}}(U,k)$.
 \end{lemma}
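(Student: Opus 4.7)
The plan is to interpose the \emph{essential minimum}
\[e_1(\overline{\mathcal{D}}) := \sup_{\emptyset\ne V\subseteq U\text{ Zariski open}} \inf_{x\in V(\overline{K})} h_{\overline{\mathcal{D}}}(x)\]
between the two quantities in the statement; the claim then factors as
\[\phi_{\{x\}}(\overline{\mathcal{D}}) \ge e_1(\overline{\mathcal{D}}) \ge \mu_+^{\pi}(\overline{\mathcal{D}}),\]
and I would establish each piece in turn.

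The first inequality uses only genericity. Given $\varepsilon>0$, choose a non-empty Zariski open $V\subseteq U$ with $\inf_{x\in V(\overline{K})} h_{\overline{\mathcal{D}}}(x)\ge e_1(\overline{\mathcal{D}})-\varepsilon$. Since $\{x_m\}$ is generic, $x_m\in V(\overline{K})$ for all sufficiently large $m$, so $h_{\overline{\mathcal{D}}}(x_m)\ge e_1(\overline{\mathcal{D}})-\varepsilon$ eventually. Taking $\liminf$ and then $\varepsilon\to 0$ yields the desired bound.

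For the second inequality I would reduce to the nef case via Fujita approximation. Given $\varepsilon > 0$, \cite[Theorem 5.2.8]{yuan2021adelic} provides a birational modification $\pi\colon \mathcal{U}'\to\mathcal{U}$, a projective model $\mathcal{X}'$ of $\mathcal{U}'$, and a nef arithmetic $\mathbb{Q}$-adelic divisor $\overline{\mathcal{A}}$ on $\mathcal{X}'$ with $\overline{\mathcal{A}}\le \pi^*\overline{\mathcal{D}}$ and $\overline{\mathcal{A}}^{d+1}\ge \widehat{\text{vol}}(\overline{\mathcal{D}})-\varepsilon$. Since the generic fibre $\overline{A}$ is nef and dominated by $\pi^*\overline{D}$, monotonicity of the geometric volume gives $\overline{A}^d=\widehat{\text{vol}}(\overline{A})\le \widehat{\text{vol}}(\overline{D})$. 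The nef adelic version of Zhang's inequality (discussed below) then yields $e_1(\overline{\mathcal{A}})\ge \overline{\mathcal{A}}^{d+1}/((d+1)\overline{A}^d)$, while effectivity of $\pi^*\overline{\mathcal{D}}-\overline{\mathcal{A}}$ combined with birational invariance of heights gives $e_1(\overline{\mathcal{D}})\ge e_1(\overline{\mathcal{A}})$. Assembling the estimates and letting $\varepsilon\to 0$ closes the argument.

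The principal obstacle is the nef adelic Zhang inequality $e_1(\overline{\mathcal{A}})\ge \overline{\mathcal{A}}^{d+1}/((d+1)\overline{A}^d)$ invoked above. In the projective arithmetic setting this is classical (and its generalisation to big divisors is due to Chen \cite{chendiff}); in the quasi-projective adelic setting I would approximate $\overline{\mathcal{A}}$ by a sequence of model nef divisors $\overline{\mathcal{A}}_j$ satisfying the Cauchy condition against a boundary divisor $\overline{\mathcal{D}}_0$, apply the classical Zhang inequality at each projective model, and pass to the limit using the continuity of intersection numbers (Lemma \ref{lemma:nef1} and \cite[Proposition 4.1.1]{yuan2021adelic}) together with the fact that heights and essential minima on $U(\overline{K})$ are controlled under such Cauchy perturbations by a bounded multiple of $h_{\overline{\mathcal{D}}_0}$. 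This bookkeeping, while technical, is strictly parallel to the continuity-of-positive-intersection argument already performed in Corollary \ref{corol:positiisusual}.
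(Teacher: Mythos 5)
Your route matches the paper's exactly: both interpose the essential minimum (the paper writes $\zeta_{\text{ess}}$ for your $e_1$) and prove $\phi_{\{x\}}\ge \zeta_{\text{ess}}$ by the same genericity argument. The only divergence is in the remaining inequality $\zeta_{\text{ess}}\ge\mu_+^{\pi}$, which the paper simply cites from \cite[Lemma 5.3.4]{yuan2021adelic} while you re-derive it from Fujita approximation and Zhang's inequality; your re-derivation is sound, though the final paragraph of Cauchy bookkeeping is unnecessary since the adelic Fujita approximation \cite[Theorem 5.2.8]{yuan2021adelic} already furnishes a \emph{model} nef $\mathbb{Q}$-divisor on a projective model, to which the classical projective Zhang inequality applies directly.
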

 \begin{proof}
    We recall the definition of the essential minima of an arithmetic adelic divisor, denoted by $\widehat{\mu}_{\text{ess}}(\cdot)$ as
    \[\widehat{\mu}_{\text{ess}}(\overline{D}):=\sup_V\inf_{x\in V(\overline{K})}h_{\overline{D}}(x)\]
    where $V$ ranges over all proper non-empty Zariski open subsets of $U$. Then the definition of $\phi_{\{x\}}$ readily shows that for any Zariski open $V\subset U$, since $x_m\in V(\overline{K})$ for large enough $m$ we have that $\phi_{\{x\}}\ge\widehat{\mu}_{\text{ess}}$ on $\widehat{\text{Div}}(U,k)$. Then we can conclude the claim of the lemma since $\widehat{\mu}_{\text{ess}}\ge\mu^{\pi}_+$ from \cite[Lemma 5.3.4]{yuan2021adelic}.
 \end{proof}
 We are going to use the above Lemma together with our differentiability result to obtain an equidstribution theorem. As the above lemma suggests our strategy will be to deduce the  differentiability of $\phi_{\{x\}}$ at big adelic divisors from the differentiability of $\pi_+^{\pi}$. We will need a result for that on differentiability of super-additive functions as done in \cite{chendiff} which we restate for the sake of completeness.
 \begin{lemma}{\cite[Prop 5.4]{chendiff}}
     \label{theorem:auxillchen}
     Let $G$ be a group, $H$ be a sub-group, $C$ be a sub semi-group which is open with respect to $H$ and $x\in C$. Furthermore let $f,t\colon C\rightarrow\mathbb{R}$ be two positively homogeneous functions satisfying
     \begin{enumerate}
         \item For all $a,b\in C$, we have $f(a+b)\ge f(a)+f(b)$
         \item $f\ge t$, $f(x)=t(x)$
         \item $t$ is differentiable at $x$ along the directions in $H$.
     \end{enumerate}
Then the function $f$ is differentiable as well and we have $D_x f=D_x t$.
 \end{lemma}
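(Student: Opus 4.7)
The plan is to use positive homogeneity to recast the derivative at $x$ as an asymptotic limit at large multiples $\lambda x$, and then to sandwich $f$ between quantities involving $t$ using super-additivity and the domination $f\ge t$. For $h\in H$ and rational $s=1/\lambda$ with $\lambda$ large, positive homogeneity gives the identity
\[
\frac{f(x+sh)-f(x)}{s}=f(\lambda x+h)-f(\lambda x),
\]
and likewise for $t$. Since $H$ is a subgroup, openness of $C$ with respect to $H$ (applied to both $h$ and $-h$) ensures $\lambda x\pm h\in C$ for all sufficiently large $\lambda$, so all expressions are well-defined. In particular, proving $D_x f(h)=D_x t(h)$ reduces to showing $\lim_{\lambda\to\infty}[f(\lambda x+h)-f(\lambda x)]=D_x t(h)$.

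Next I would apply super-additivity to the decomposition $2\lambda x=(\lambda x+h)+(\lambda x-h)$, obtaining $f(\lambda x+h)+f(\lambda x-h)\le f(2\lambda x)=2\lambda f(x)=2\lambda t(x)$. Combining with $f(\lambda x-h)\ge t(\lambda x-h)$ and subtracting $f(\lambda x)=\lambda f(x)=\lambda t(x)=t(\lambda x)$ from both sides produces the upper bound
\[
f(\lambda x+h)-f(\lambda x)\le -\bigl[t(\lambda x-h)-t(\lambda x)\bigr].
\]
The matching lower bound
\[
f(\lambda x+h)-f(\lambda x)\ge t(\lambda x+h)-t(\lambda x)
\]
is immediate from $f\ge t$ and the same identity $f(\lambda x)=t(\lambda x)$.

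Letting $\lambda\to\infty$, the reparameterization applied to $t$ identifies the right-hand side of the lower bound with $D_x t(h)$ and, applied with $-h$ in place of $h$, identifies $\lim_{\lambda\to\infty}[t(\lambda x-h)-t(\lambda x)]$ with the directional derivative of $t$ at $x$ along $-h$, which by two-sided differentiability equals $-D_x t(h)$. The two bounds therefore sandwich $\lim_{\lambda\to\infty}[f(\lambda x+h)-f(\lambda x)]$ exactly at $D_x t(h)$, and reversing the reparameterization yields $D_x f(h)=D_x t(h)$. The main technical subtlety is reconciling the openness hypothesis, which is formulated for integer scalings, with the positive-rational homogeneity needed for the reparameterization, so that the limit $\lambda\to\infty$ can be taken through rationals $\lambda$ with $\lambda x\pm h\in C$; this follows by writing $\lambda=n+r$ with $n\in\mathbb{N}$ large (so that $nx\pm h\in C$) and $r\ge 0$ rational (with $rx\in C$) and using closure of $C$ under addition, but it is the only point in the argument that is not formally automatic.
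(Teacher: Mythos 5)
The paper does not actually prove this lemma: it is stated as a citation of Chen's Proposition~5.4 in \cite{chendiff} and used as a black box, so there is no in-paper proof for you to match against. That said, your argument is correct and is essentially Chen's argument. The heart of it is exactly the right sandwich: from $f\ge t$ and $f(\lambda x)=\lambda f(x)=\lambda t(x)=t(\lambda x)$ you get the lower bound $f(\lambda x+h)-f(\lambda x)\ge t(\lambda x+h)-t(\lambda x)$, and from super-additivity applied to $2\lambda x=(\lambda x+h)+(\lambda x-h)$ together with $f\ge t$ you get the upper bound $f(\lambda x+h)-f(\lambda x)\le -\bigl[t(\lambda x-h)-t(\lambda x)\bigr]$; both sides tend to $D_x t(h)$ using the differentiability (in particular the linearity) of $t$, which forces $D_x f(h)=D_x t(h)$. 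The one place you over-engineer is the closing paragraph on reconciling rational and integer scalings: in an abstract group $G$ a fractional multiple $rx$ with $r\notin\mathbb{Z}$ is not defined at all, so the quotient-form identity $\frac{f(x+sh)-f(x)}{s}=f(\lambda x+h)-f(\lambda x)$ is best read purely as motivation for the definition $D_x f(h):=\lim_{n\to\infty}\bigl[f(nx+h)-f(nx)\bigr]$ over positive integers $n$, after which the openness hypothesis (which is stated for integer multiples) suffices as is and no interpolation through rationals is needed or meaningful. Dropping that remark would tighten the proof without changing its substance.
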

 Next we show that the positive intersections respect effectivity relations in terms of the direction $\overline{E}$ which helps us to control positive intersection products against generically trivial divisors with constant metrics.
 \begin{lemma}
     \label{lemma:control}
     Suppose $\overline{D}$ is an arithmetic adelic divisor in $\widehat{\emph{Div}}(U,k)$ with generic fiber $D$ and suppose $\overline{E}_1, \overline{E}_2\in \widehat{\emph{Div}}(U,k)_{\emph{int}}$ such that $\overline{E}_1\le \overline{E}_2$. Then we have
     \[\langle\overline{D}^d\rangle\cdot \overline{E}_1\le \langle\overline{D}^d\rangle\cdot \overline{E}_2\]
     Furthermore for any $r>0\in \mathbb{R}$, we have 
     \[\langle\overline{D}^d\rangle\cdot \mathcal{O}(r)\le r\cdot\emph{vol}(D)\]
 \end{lemma}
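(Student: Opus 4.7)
The plan is to treat the two inequalities separately, with the monotonicity in $\overline{E}$ being the substantive part and the bound against $\mathcal{O}(r)$ following essentially by unwinding the definitions on the generic fibre.

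For the monotonicity, the key reduction is via the linearity of positive intersection products in the integrable argument (Lemma \ref{lemma:positlinear}): the desired inequality is equivalent to
\[ \langle\overline{\mathcal{D}}^d\rangle\cdot\overline{F}\ge 0\qquad\text{for }\overline{F}:=\overline{E}_2-\overline{E}_1\ge 0\text{ integrable.} \]
I would first decompose $\overline{F}=\overline{F}_+-\overline{F}_-$ as a difference of nef and effective adelic divisors using Lemma \ref{lemma:diffeff}. Next I would establish the following positivity at the model level: for any admissible approximation $(X',\overline{A})$ of $\overline{\mathcal{D}}$, the intersection $\overline{A}^d\cdot\overline{F}$ is non-negative. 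Since $\overline{F}$ is effective in the completed sense, by definition it is approximated in the boundary topology by a Cauchy sequence of model effective $\mathbb{Q}$-adelic divisors $\overline{F}_j$; using continuity of absolute intersection numbers in the boundary topology (\cite[Prop 4.1.1]{yuan2021adelic}) together with the classical positivity that a nef model $\mathbb{Q}$-divisor intersected with an effective divisor on a projective arithmetic variety is $\ge 0$, we conclude $\overline{A}^d\cdot\overline{F}=\lim_j\overline{A}^d\cdot\overline{F}_j\ge 0$.

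To upgrade this to the positive intersection, I would use that by Lemma \ref{lemma:free} the collection of admissible approximations is filtered under common refinement, and that on a common projective model, bigger nef divisors give bigger top intersection against a fixed nef one (itself a consequence of the same classical positivity applied to the effective difference times a polynomial of nef terms). Choose a cofinal directed sequence $\{\overline{A}_i\}$ of admissible approximations that realises both suprema
\[ \langle\overline{\mathcal{D}}^d\rangle\cdot\overline{F}_\pm=\sup_{(X',\overline{A})}\overline{A}^d\cdot\overline{F}_\pm=\lim_{i\to\infty}\overline{A}_i^d\cdot\overline{F}_\pm \]
simultaneously; then bilinearity of intersection gives $\langle\overline{\mathcal{D}}^d\rangle\cdot\overline{F}=\lim_i\overline{A}_i^d\cdot\overline{F}\ge 0$, as each term is non-negative by the model-level step.

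For the second inequality, $\mathcal{O}(r)$ is nef for $r>0$, so Corollary \ref{corol:ekhihai} yields $\langle\overline{\mathcal{D}}^d\rangle\cdot\mathcal{O}(r)=\sup_{(X',\overline{A})}\overline{A}^d\cdot\mathcal{O}(r)$. For each admissible approximation, the formula for intersecting with a constant Green function collapses to $\overline{A}^d\cdot\mathcal{O}(r)=r\cdot A^d$, where $A$ is the generic fibre of $\overline{A}$. Since $\overline{A}\le\pi^*\overline{\mathcal{D}}$ in $\widehat{\text{Div}}(\mathcal{U}',k)$, restricting to the generic fibre gives $A\le\pi^*\overline{D}$ in $\widehat{\text{Div}}(U,K)$; as $A$ is a nef $\mathbb{Q}$-adelic divisor, $A^d=\widehat{\text{vol}}(A)\le\widehat{\text{vol}}(\pi^*\overline{D})=\widehat{\text{vol}}(\overline{D})$ by monotonicity and birational invariance of the geometric adelic volume. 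Taking the supremum over admissible approximations yields the claim. The main obstacle in the above plan is the first step, namely setting up the classical positivity of nef against effective in the adelic framework, where one must be careful about which notion of effectivity (the one obtained by Cauchy completion of model effective divisors) is being used and why continuity of intersection numbers transports this positivity to the adelic limit.
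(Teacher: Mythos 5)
Your overall plan is sound, and the second inequality is argued essentially as the paper does: reduce $\overline{\mathcal{A}}^d\cdot\mathcal{O}(r)$ to $r\cdot\overline{A}^d$ on the generic fibre, use that the generic fibre $\overline{A}$ of an arithmetically nef approximation is nef so $\overline{A}^d=\widehat{\mathrm{vol}}(\overline{A})$, bound by $\widehat{\mathrm{vol}}(\overline{D})$ via $\overline{A}\le\overline{D}$, and take the supremum. (The appeal to Corollary~\ref{corol:ekhihai} is unnecessary here — free approximations are already nef, so the computation works straight from Definition~\ref{def:positnt}.)

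The first inequality is where you diverge from the paper's route and where the weak link sits. Your reduction via linearity to $\langle\overline{\mathcal{D}}^d\rangle\cdot\overline{F}\ge 0$ with $\overline{F}\ge 0$ integrable is fine, and the model-level positivity $\overline{A}^d\cdot\overline{F}\ge 0$ is what the paper also asserts (you give a more detailed justification via approximation by model effective divisors and continuity of intersection numbers, which is harmless). The problem is the upgrade to the positive intersection: you claim that Lemma~\ref{lemma:free} makes the collection of admissible approximations of $\overline{\mathcal{D}}$ filtered and lets you pick a cofinal sequence realising the suprema for $\overline{F}_+$ and $\overline{F}_-$ simultaneously. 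That is not what Lemma~\ref{lemma:free} gives. It applies on a fixed projective variety to a genuine arithmetic divisor, so to use it you must first pass to a model approximation $D_j+q_jD_0$ of $\overline{\mathcal{D}}$; the common refinement $\overline{A}$ it then produces is only an admissible approximation of the perturbed divisor $\overline{\mathcal{D}}+2q_j\overline{D}_0$, not of $\overline{\mathcal{D}}$ itself. This is precisely why the proof of Lemma~\ref{lemma:positlinear} has to run a $q_j\to 0$ limit and invoke continuity (hence bigness). As stated, your filteredness claim has a gap.

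Fortunately the gap is avoidable, and the fix brings you back to the paper's simpler reasoning. Once you know $\overline{A}^d\cdot\overline{F}_+\ge\overline{A}^d\cdot\overline{F}_-$ for \emph{every} admissible $(X',\overline{A})$, you immediately get $\sup_{\overline{A}}\overline{A}^d\cdot\overline{F}_+\ge\overline{A}^d\cdot\overline{F}_-$ for each fixed $\overline{A}$, hence $\sup_{\overline{A}}\overline{A}^d\cdot\overline{F}_+\ge\sup_{\overline{A}}\overline{A}^d\cdot\overline{F}_-$, with no need to realise the two suprema along a common sequence. Equivalently, and perhaps cleanest: choose nef decompositions of $\overline{E}_1,\overline{E}_2$ with a \emph{common} negative part (always possible by adding nef terms), so the monotonicity reduces to nef directions $\overline{G}_1\le\overline{G}_2$, where Definition~\ref{def:positnt} gives the supremum formula outright and the model-level positivity yields $\sup_{\overline{A}}\overline{A}^d\cdot\overline{G}_1\le\sup_{\overline{A}}\overline{A}^d\cdot\overline{G}_2$ termwise. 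That is essentially what the paper does, and it avoids filteredness entirely.
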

 \begin{proof}
     We begin with proof of the first statement. As usual we choose a common quasi-projective model of $U$ and suppose $(X,\overline{A})$ is an admissible approximation and suppose $A$ is the generic fiber. Then since $\overline{E}_2-\overline{E}_1\ge 0$ and $\overline{A}$ is nef, we have
     \[\overline{A}^d\cdot(\overline{E}_2-\overline{E}_1)\ge 0\Rightarrow \overline{A}^d\cdot \overline{E}_1\le \overline{A}^d\cdot \overline{E}_2\]
     Since the positive intersection products are defined by taking the supremum of all such products appearing as we vary $(X',\overline{A})$ across all admissible approximations in the above display we can deduce the claim.\\
     For the second claim, we note that $\mathcal{O}(r)$ is nef and hence in particular integrable for all constants $r>0$ and so it makes sense to take positive intersection products against it. As in the first paragraph we take an admissible approximation $(X',\overline{A})$ of $\overline{D}$ on a common quasi-projective model. Then we have 
     \begin{equation}
         \label{equa:nebula}
         \overline{A}^d\cdot \mathcal{O}(r)=r\cdot A^d\le r\cdot \text{vol}(D)\ \text{since}\ r\ge 0
     \end{equation}
     To see the last inequality above, we note that $A\le D$ in the level of generic fibers and hence $\text{vol}(A)\le\text{vol}(D)$. However since $\overline{A}$ was arithmetically nef (as it was an admissible approximation of $\overline{D}$), we deduce that $A$ is nef and hence $\text{vol}(A)=A^d\le \text{vol}(D)$ and hence we deduce the inequality in \eqref{equa:nebula}. Finally to obtain our second claim, we note that positive intersection product $\langle\overline{D}^d\rangle\cdot \mathcal{O}(r)$ is defined by taking the supremum of the first term in \eqref{equa:nebula} as we vary admissible approximations $(X',\overline{A})$ which gives us the desired inequality.
 \end{proof}
 Next we extend the definition of positive intersection products against generically trivial adelic line bundles $\mathcal{O}(g)$ where $g\in C_{\text{cptf}}(U_v)$ is arbitary. Using the lemma above, we show that we can extend positive intersection products to uniform limits of model functions. Recall from Definition \ref{def:model} that we say $g\in C_c^0(U_v)$ is a \emph{model function} if $g$ is smooth when $v$ is Archimedean and if $g$ is induced by a model when $v$ is non-Archimedean.
 \begin{lemma}
     \label{lemma:positextension}
     Suppose $g_n$ is a sequence of uniformly convergent model functions in $C_{\emph{mod}}(U_v)$ converging uniformly to an arbitary continuous function $g\in C_{\emph{cptf}}(U_v)$ and $\overline{D}\in\widehat{\emph{Div}}(U,k)$ is big. Then we have that the sequence \[\{\langle\overline{D}^d\rangle\cdot \mathcal{O}(g_n)\}\]
     converges in $\mathbb{R}$. Furthermore the limit is independent of the sequence of functions $g_n$.
 \end{lemma}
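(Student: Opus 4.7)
The plan is to show that $\{\langle\overline{\mathcal{D}}^d\rangle\cdot\mathcal{O}(g_n)\}$ is a Cauchy sequence in $\mathbb{R}$, which yields convergence; independence from the particular approximating sequence then follows from an interleaving argument. Because $C^0_{\text{mod}}(U_v)$ is a subgroup of $\widehat{\text{Div}}(U,k)$, each difference $g_n - g_m$ is again a model function, and Lemma \ref{lemma:modelintegrable} guarantees that $\mathcal{O}(g_n - g_m)$ is integrable, so the positive intersection $\langle\overline{\mathcal{D}}^d\rangle\cdot\mathcal{O}(g_n - g_m)$ is well-defined. The linearity of the positive intersection in its direction slot at the big point $\overline{\mathcal{D}}$ (Lemma \ref{lemma:positlinear} together with the ensuing remark extending it to integrable directions) then gives
\[
\langle\overline{\mathcal{D}}^d\rangle\cdot\mathcal{O}(g_n) - \langle\overline{\mathcal{D}}^d\rangle\cdot\mathcal{O}(g_m) = \langle\overline{\mathcal{D}}^d\rangle\cdot\mathcal{O}(g_n - g_m).
\]

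Next I would translate the uniform Cauchy condition on $\{g_n\}$ into an effectivity relation between divisors. Given $\epsilon>0$, uniform convergence provides $N$ such that $\|g_n - g_m\|_{\infty}\le\epsilon$ for all $n,m\ge N$. Since the construction of $\mathcal{O}(\cdot)$ in \cite[Theorem 3.6.4]{yuan2021adelic} is order-preserving, this translates to
\[
\mathcal{O}(-\epsilon)\le\mathcal{O}(g_n - g_m)\le\mathcal{O}(\epsilon)
\]
in $\widehat{\text{Div}}(U,k)$. The monotonicity statement of Lemma \ref{lemma:control} then sandwiches $\langle\overline{\mathcal{D}}^d\rangle\cdot\mathcal{O}(g_n - g_m)$ between $\langle\overline{\mathcal{D}}^d\rangle\cdot\mathcal{O}(-\epsilon)$ and $\langle\overline{\mathcal{D}}^d\rangle\cdot\mathcal{O}(\epsilon)$; the quantitative bound $\langle\overline{\mathcal{D}}^d\rangle\cdot\mathcal{O}(r)\le r\cdot\widehat{\text{vol}}(\overline{D})$ from the same lemma, combined with linearity in the second slot to dispatch the lower bound, yields
\[
\bigl|\langle\overline{\mathcal{D}}^d\rangle\cdot\mathcal{O}(g_n)-\langle\overline{\mathcal{D}}^d\rangle\cdot\mathcal{O}(g_m)\bigr|\le\epsilon\cdot\widehat{\text{vol}}(\overline{D}).
\]
Since $\epsilon$ is arbitrary and $\widehat{\text{vol}}(\overline{D})$ is finite, the sequence is Cauchy in $\mathbb{R}$ and hence convergent.

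For the independence of the limit from the choice of approximating sequence, let $\{g_n\}$ and $\{g_n'\}$ be two sequences of model functions converging uniformly to the same $g$. I would consider the interleaved sequence $g_1, g_1', g_2, g_2', \ldots$, which itself converges uniformly to $g$; by the Cauchy argument just established, the corresponding sequence of positive intersection numbers converges, and since both the odd-indexed subsequence (namely $\{\langle\overline{\mathcal{D}}^d\rangle\cdot\mathcal{O}(g_n)\}$) and the even-indexed subsequence ($\{\langle\overline{\mathcal{D}}^d\rangle\cdot\mathcal{O}(g_n')\}$) must share the same limit, the two original limits coincide.

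The step I expect to require the most care is the translation from the functional inequality $\|g_n - g_m\|_{\infty}\le\epsilon$ to the divisor inequality $\mathcal{O}(-\epsilon)\le\mathcal{O}(g_n - g_m)\le\mathcal{O}(\epsilon)$ in $\widehat{\text{Div}}(U,k)$; although this is essentially built into the definition of $\mathcal{O}(\cdot)$ as an adelic divisor whose boundary norm reflects the sup-norm of the Green function, one may need to descend to a common projective model of $U$ and compare Green functions there in order to cite the Yuan--Zhang construction precisely, after which everything else follows from the two already-established lemmas about positive intersections.
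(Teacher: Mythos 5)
Your proof is correct and takes essentially the same route as the paper: it establishes the Cauchy property by translating the uniform Cauchy condition on $\{g_n\}$ into effectivity bounds on $\mathcal{O}(g_n-g_m)$, then applies linearity (Lemma \ref{lemma:positlinear}), monotonicity, and the quantitative bound from Lemma \ref{lemma:control} to sandwich the differences by $r\cdot\widehat{\text{vol}}(\overline{D})$. The interleaving argument you give for independence of the limit is the standard filler for the paper's remark that the second assertion is proved ``very similarly.''
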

 \begin{proof}
     We only prove the first assertion as the second one can be deduced very similarly. As we have remarked before, since the functions $g_n$ are model we know that each $\mathcal{O}(g_n)$ is integrable and hence the terms $\langle\overline{D}^d\rangle\cdot \mathcal{O}(g_n)$ make sense. Since we have that $\{g_m\}$ is uniformly convergent, we can find a sequence of positive rational numbers $r_m$ converging to 0 such that $-r_m\le g_n-g_m\le r_m$ for all $n\ge m$. In terms of Hermitian line bundles, we have then the effectivity relations 
     \begin{equation}
         \label{equa:nebula2}
         -\mathcal{O}(r_m)\le \mathcal{O}(g_n)-\mathcal{O}(g_m)\le \mathcal{O}(r_m)
     \end{equation}
     Using the linearity of positive intersection products in Lemma \ref{lemma:positlinear} and the first assertion of Lemma \ref{lemma:control}, we deduce 
     \[-\langle\overline{D}^d\rangle\cdot \mathcal{O}(r_m)\le \langle\overline{D}^d\rangle\cdot\mathcal{O}(g_n)-\langle\overline{D}^d\rangle\cdot\mathcal{O}(g_m)\le \langle\overline{D}^d\rangle\cdot \mathcal{O}(r_m)\]
     Estimating the two extremities of the above inequality using the second assertion of Lemma \ref{lemma:control}, we get 
     \[-r_m\cdot\text{vol}(D)\le\langle\overline{D}^d\rangle\cdot\mathcal{O}(g_n)-\langle\overline{D}^d\rangle\cdot\mathcal{O}(g_m)\le r_m\cdot\text{vol}(D)\ \text{for all}\ n\ge m\]
     This gives us the desired claim since $r_m\to 0$ as $m\to\infty$.
 \end{proof}
 We can then finally extend our positive intersection products against $\mathcal{O}(g)$ where $g$ is an arbitary continuous function with compact support.
 \begin{definition}
     \label{def:positagainstconti}
     Suppose $\overline{D}$ is any big arithmetic adelic divisor in $\widehat{\emph{Div}}(U,k)$ and suppose $g\in C_c^0(U_v)$. Then we define 
     \[\langle\overline{D}^d\rangle\cdot \mathcal{O}(g):=\lim_{m\to\infty}\langle\overline{D}^d\rangle\cdot \mathcal{O}(g_m)\]
     where $\{g_m\}$ is any sequence of model functions on $U_v$ converging uniformly to $g$.
 \end{definition}
 \begin{remark}
     We remark that our definition above makes sense. Indeed choosing a projective model $X$ of $U$ and viewing $g$ as a continuous function on $X_v$ by extending it to 0 outside the support, we can use the Weierstrass approximation theorem in the Archimedean case and \cite[Theorem 7.12]{saviourpiecewiselinear} in the algebraically closed non-Archimedean case (adapted by Yuan in \cite{bigyuan} in the arbitary non-Archimedean case) to deduce that there is indeed such a sequence $\{g_m\}$ approximating $g$ uniformly as above. Furthermore in the second assertion of Lemma \ref{lemma:positextension} we also see that the limit is independent of the sequence $\{g_m\}$.
 \end{remark}
 We can finally state our main result on equidistribution of small and generic points with respect to a big adelic divisor.
 \begin{theorem}
     \label{theorem:equidistribution}
     Suppose $U$ is a quasi-projective variety over $K$ of dimension $d§$ and suppose $\overline{D}$ is a big arithmetic adelic divisor in $\widehat{\emph{Div}}(U,k)$. Furthermore suppose $\{x_m\}$ is a generic sequence of geometric points in $U(\overline{K})$ which is small with respect to $\overline{D}$. Then for any place $v$ on $K$ and for any $g\in C_c^0(U_v)$, we have
     \[\lim_{m\to\infty}\int_{U_v}g\ d\eta_{x_m}=\frac{\langle\overline{D}^d\rangle\cdot\mathcal{O}(g)}{\emph{vol}(D)}\]
     In particular, the sequence of Radon measures $\{\eta_{x_m}\}$ converge weakly to the Radon measure given by 
     \[g\in C_c^0(U_v)\mapsto \frac{\langle\overline{D}^d\rangle\cdot\mathcal{O}(g)}{\emph{vol}(D)}\]
 \end{theorem}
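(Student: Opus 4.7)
The plan is to apply the variational principle of Chen (see \cite[Section 5.2]{chendiff}) adapted to the adelic setting. Set $f=\phi_{\{x\}}$ and $t=\hat{\mu}_+^{\pi}$, both defined on the semigroup $C=C(U,\overline{\mathcal{D}})$ which is open with respect to $H=C^0_{\text{mod}}(U_v)$ as noted above. The function $f$ is super-additive and positively homogeneous (of degree $1$) on $C$ by additivity and linearity of heights on geometric points; $t$ is also positively homogeneous on $C$ directly from its definition. Every element of $C$ is big by construction, so Lemma \ref{lemma:derlink1} applied to the generic sequence $\{x_m\}$ gives $f\ge t$ on all of $C$, while smallness of $\{x_m\}$ with respect to $\overline{\mathcal{D}}$ forces $f(\overline{\mathcal{D}})=t(\overline{\mathcal{D}})$.

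The next step is to establish differentiability of $t$ at $\overline{\mathcal{D}}$ along every direction $\mathcal{O}(g)$ with $g\in H$ a compactly supported model function. By Lemma \ref{lemma:modelintegrable}, $\mathcal{O}(g)$ is integrable; moreover its generic fibre is trivial, so the denominator $\widehat{\text{vol}}(\overline{D}+s\overline{M})$ in the quotient defining $\hat{\mu}_+^{\pi}(\overline{\mathcal{D}}+s\mathcal{O}(g))$ is constant in $s$. Corollary \ref{coroll:asympslope} therefore collapses to
\[\frac{d}{ds}\hat{\mu}_+^{\pi}(\overline{\mathcal{D}}+s\mathcal{O}(g))\Big|_{s=0}=\frac{\langle\overline{\mathcal{D}}^d\rangle\cdot\mathcal{O}(g)}{\widehat{\text{vol}}(\overline{D})}.\]
All three hypotheses of Lemma \ref{theorem:auxillchen} are now in place, so $\phi_{\{x\}}$ is differentiable at $\overline{\mathcal{D}}$ along $H$ with the same derivative. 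Lemma \ref{lemma:derlink} then yields the equidistribution formula for every compactly supported model function $g$.

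To pass from model to arbitrary $g\in C_c^0(U_v)$, view $g$ as a continuous function on the analytification of a fixed projective model (by zero extension outside its support) and invoke the Weierstrass approximation theorem in the Archimedean case, or Gubler's theorem \cite[Theorem 7.12]{saviourpiecewiselinear} together with its adaptation by Yuan in \cite[Section 3]{bigyuan} in the non-Archimedean case, to obtain compactly supported model functions $g_n$ with $g_n\to g$ uniformly. Since each $\eta_{x_m}$ is a probability measure, the estimate $|\int_{U_v}(g-g_n)\,d\eta_{x_m}|\le\|g-g_n\|_\infty$ holds uniformly in $m$; Lemma \ref{lemma:positextension} together with Definition \ref{def:positagainstconti} shows that the right-hand side $\langle\overline{\mathcal{D}}^d\rangle\cdot\mathcal{O}(g_n)/\widehat{\text{vol}}(\overline{D})$ also converges to the corresponding quantity for $g$. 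A standard three-$\varepsilon$ argument yields the formula, and weak convergence of $\{\eta_{x_m}\}$ to the Radon measure in the statement follows immediately.

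The main obstacle is the careful verification of the inputs to Lemma \ref{theorem:auxillchen}: that the semigroup $C(U,\overline{\mathcal{D}})$ is genuinely open with respect to $H$, that the inequality $\phi_{\{x\}}\ge\hat{\mu}_+^{\pi}$ holds at \emph{every} point of $C$ and not only at $\overline{\mathcal{D}}$ (forcing application of Lemma \ref{lemma:derlink1} at each perturbation, which in turn relies on each such divisor being big), and that the direction $\mathcal{O}(g)$ has trivial generic fibre so that the second term in Corollary \ref{coroll:asympslope} vanishes. A subtler point is that the smallness hypothesis bounds $\liminf h_{\overline{\mathcal{D}}}(x_m)$ only from above by $\hat{\mu}_+^{\pi}(\overline{\mathcal{D}})$; Lemma \ref{lemma:derlink1} must be used to upgrade this to an honest limit before Lemma \ref{lemma:derlink} can be invoked.
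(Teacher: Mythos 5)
Your proposal follows the paper's proof essentially step by step: set $f=\phi_{\{x\}}$, $t=\hat{\mu}_+^{\pi}$ on the semigroup $C(U,\overline{\mathcal{D}})$ with $H=C^0_{\text{mod}}(U_v)$, verify the hypotheses of Lemma~\ref{theorem:auxillchen} via Lemmas~\ref{lemma:derlink1} and \ref{lemma:modelintegrable} and Corollary~\ref{coroll:asympslope}, conclude via Lemma~\ref{lemma:derlink}, and pass from model $g$ to arbitrary $g\in C_c^0(U_v)$ by uniform approximation using Lemma~\ref{lemma:positextension} and Definition~\ref{def:positagainstconti}. The only cosmetic difference is that the paper performs the reduction to model functions first while you defer it to the end; one small correction to your final remark is that the paper's definition of smallness already asserts $\lim_{m\to\infty}h_{\overline{\mathcal{D}}}(x_m)=\hat{\mu}_+^{\pi}(\overline{\mathcal{D}})$ as an honest limit, so no upgrade from a $\liminf$ bound via Lemma~\ref{lemma:derlink1} is needed before invoking Lemma~\ref{lemma:derlink}.
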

 \begin{proof}
      We begin by noting that we can reduce to the case when $g$ is a model function. Indeed as we have remarked before, any continuous function with compact support on $U_v$ can be approximated by model functions in the uniform topology. Note that then for such a sequence of model functions approximating our arbitary continuous function, the L.H.S will converge to the desired integral and the R.H.S will also converge to the desired quantity simple by how we define the positive intersection products against $\mathcal{O}(g)$ (see Definition \ref{def:positagainstconti} and Lemma \ref{lemma:positextension}). Hence we can safely assume that $g$ is a model function.\\
     Note that now since we have assumed $g$ to be model, from Lemma \ref{lemma:modelintegrable} we know that each $\mathcal{O}(g)\in \widehat{\text{Div}}(U,k)_{\text{int}}$. The proof is then a consequence of all the results that we obtained in this section so far. We begin by considering Theorem \ref{theorem:auxillchen} and suppose we set $G=\widehat{\text{Div}}(U,k)$, $H=C^0_{\text{mod}}(U_v)$, $C=C(U,\overline{D})$. We set $f=\phi_{\{x\}}$ and $t=\mu_+^{\pi}$. We check that these functions satisfy the hypotheses of Theorem \ref{theorem:auxillchen}. Indeed note that we get from positive homogeneity of heights and volumes that both $f$ and $t$ are positively homogeneous functions. Moreover
     \[\phi_{\{x\}}(\overline{D}_1+\overline{D}_2)=\liminf_{m\to\infty}h_{\overline{D}_1+\overline{D}_2}(x_m)=\liminf_{m\to\infty}(h_{\overline{D}_1}(x_m)+h_{\overline{D}_2}(x_m))\ge \phi_{\{x\}}(\overline{D}_1)+\phi_{\{x\}}(\overline{D}_2)\]
     so $f=\mu_+^{\pi}$ satisfies super-additivity.
     From Lemma \ref{lemma:derlink1} we have that $f\ge t$ on $C(U,\overline{D})$ since all its elements are big by definition. Furthermore since the sequence $\{x\}$ is small with respect to $\overline{D}$, from the definitions of $\phi_{\{x\}}$ and $\mu_+^{\pi}$ we have that $f(x)=t(x)$ and hence also the second hypothesis of Theorem $\ref{theorem:auxillchen}$ is satisfied. Finally from Corollary \ref{coroll:asympslope} and the assumed bigness of $\overline{D}$, we deduce that $t$ is differentiable at $x$ in the direction of $\mathcal{O}(g)$ since $\mathcal{O}(g)$ is integrable as mentioned in the first paragraph. Furthermore we also deduce from Corollary \ref{coroll:asympslope} that the differential is given by \[D_x t(g)=\frac{\langle\overline{D}^d\rangle\cdot\mathcal{O}(g)}{\text{vol}(D)}\]
     since the underlying generic fiber of $\mathcal{O}(g)$ is trivial. Hence from Theorem \ref{theorem:auxillchen} we deduce that $f=\phi_{\{x\}}$ is differentiable at $x=\overline{D}$ with the differential given by the differential of $t$ at $x$. Then from Lemma \ref{lemma:derlink} we deduce that the sequence of Radon measures $\{\eta_{x_m}\}$ converges weakly to the Radon measure 
     \[g\in C^0_c(U_v)\mapsto\lim_{m\to\infty}\int_{U_v}g\ d\eta_{x_m}=D_x f(g)=D_x t(g)=\frac{\langle\overline{D}^d\rangle\cdot\mathcal{O}(g)}{\text{vol}(D)}\]
     which is what we wanted to prove.
 \end{proof}
 \section{Relation to other Equidistribution Results}
 In this section, we relate our equidistribution result obtained in Theorem \ref{theorem:equidistribution} to two other equidistribution results already known. We recall that we work over a quasi-projective variety $U$ over $K$ where $K$ is either a number field or a function field of a smooth projective curve over a field. We denote by $k$ the base valued scheme $(B,\Sigma)$ where $B$ is the ring of integers of $K$ when $K$ is a number field or $B$ is the unique smooth projective curve over a field whoose function field is $K$ and $\Sigma$ is empty. The first equidistribution result is due to Yuan and Zhang obtained for nef adelic line bundles on quasi-projective varieties as in \cite[Theorem 5.4.3]{yuan2021adelic} and we show it can be deduced by our result. We end by remarking the relation of our result with a previously obtained equidistribution of Berman and Boucksom\\
 Recall that given an integrable arithmetic adelic divisor $\overline{D}\in\widehat{\text{Div}}(U,k)_{\text{int}}$, there is a Radon measure on $U_v$, denoted as the \emph{Chambert-Loir measure} by Yuan and Zhang, corresponding to $\overline{D}$ as explained in \cite[Section 3.6.6]{yuan2021adelic}. We denote this Radon measure by $c_1(\overline{D})^d$. They are constructed as weak limits of the already existing measures for the projective models approximating $\overline{D}$. The property crucial for us is that when $\overline{D}$ is nef, we have for any $g\in C_c^0(U_v)$ the equality
 \begin{equation}
     \label{equa:crucial}
     \overline{D}^d\cdot \mathcal{O}(g)=\int_{U_v}g\ c_1(\overline{D})^d
 \end{equation}
 Indeed note that a similar inequality holds in the projective setting by the definition of arithmetic intersections (see \cite{GilletSoule}). Then the above equality can be deduced by noting that both the left and right hand sides are constructed by taking limits along projective models approximating $\overline{D}$ as in \cite[Prop 4.1.1]{yuan2021adelic} and \cite[section 3.6.6]{yuan2021adelic} respectively.\ Then we can deduce Yuan and Zhang's equidistribution in the next theorem.
\begin{theorem}{\cite[Theorem 5.4.3]{yuan2021adelic}}
\label{theorem:deduction1}
Suppose $\overline{D}\in\widehat{\emph{Div}}(U,k)_{\emph{nef}}$ is a nef arithmetic adelic divisor such that $\emph{vol}(D)=D^d>0$ and suppose $\{x_m\}$ is a generic sequence in $U(\overline{K})$ which is small with respect to $\overline{D}$. Then the sequence of Radon measure $\{d\eta_{x_m}\}$ converge weakly to the Radon measure $\frac{c_1(\overline{D})^d}{D^d}$.
\end{theorem}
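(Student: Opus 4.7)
The plan is to derive Yuan--Zhang's theorem as a direct corollary of Theorem \ref{theorem:equidistribution} by specialising its right-hand side under the nef hypothesis. First I need to check that Theorem \ref{theorem:equidistribution} applies, i.e.\ that $\overline{\mathcal{D}}$ is arithmetically big. Since $\overline{\mathcal{D}}$ is nef and the notion of smallness for $\{x_m\}$ w.r.t.\ $\overline{\mathcal{D}}$ already presupposes that the asymptotic positive slope $\hat{\mu}_+^{\pi}(\overline{\mathcal{D}})$ is defined, $\overline{\mathcal{D}}$ must be big; alternatively, for nef adelic divisors the arithmetic Hilbert--Samuel-type equality $\widehat{\text{vol}}(\overline{\mathcal{D}}) = \overline{\mathcal{D}}^{d+1}$ combined with $\overline{D}^d > 0$ gives the bigness.

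Granted this, Theorem \ref{theorem:equidistribution} yields
\[\lim_{m\to\infty}\int_{U_v} g\ d\eta_{x_m} = \frac{\langle\overline{\mathcal{D}}^d\rangle\cdot \mathcal{O}(g)}{\widehat{\text{vol}}(\overline{D})}\]
for every $g\in C_c^0(U_v)$. The second step is to simplify the numerator. By Definition \ref{def:positagainstconti} together with uniform approximation of arbitrary $g \in C_c^0(U_v)$ by model functions (Weierstrass in the Archimedean case and \cite{saviourpiecewiselinear}, adapted by Yuan in the non-Archimedean case), it suffices to handle model $g$, for which $\mathcal{O}(g)$ is integrable by Lemma \ref{lemma:modelintegrable}. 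Writing $\mathcal{O}(g) = \overline{\mathcal{E}}_1 - \overline{\mathcal{E}}_2$ as a difference of nef adelic divisors (as in that lemma), since $\overline{\mathcal{D}}$ is both nef and big, Corollary \ref{corol:positiisusual} applied to each $\overline{\mathcal{E}}_i$ followed by linearity (Lemma \ref{lemma:positlinear}) collapses the positive intersection product to the classical absolute intersection number:
\[\langle\overline{\mathcal{D}}^d\rangle\cdot \mathcal{O}(g) = \overline{\mathcal{D}}^d\cdot\mathcal{O}(g).\]
Finally the crucial equation \eqref{equa:crucial} identifies $\overline{\mathcal{D}}^d\cdot\mathcal{O}(g)$ with $\int_{U_v} g\ c_1(\overline{\mathcal{D}})^d$, and the assumption $\widehat{\text{vol}}(\overline{D}) = \overline{D}^d$ lets us conclude that the limit Radon measure is $c_1(\overline{\mathcal{D}})^d / \overline{D}^d$, which is exactly the claim.

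The main obstacle is the very first step, namely verifying arithmetic bigness of $\overline{\mathcal{D}}$ so that Theorem \ref{theorem:equidistribution} is genuinely applicable; once this is secured, the remaining steps are purely formal manipulations that bolt together Corollary \ref{corol:positiisusual}, Lemma \ref{lemma:modelintegrable}, the uniform-approximation extension of positive intersections in Lemma \ref{lemma:positextension}, and the identity \eqref{equa:crucial} relating top intersection numbers of nef divisors with the associated Chambert--Loir measure.
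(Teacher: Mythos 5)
Your proposal fails at exactly the step you flag as ``the main obstacle.'' Neither of your two arguments for arithmetic bigness of $\overline{\mathcal{D}}$ works. The claim that nefness plus $\overline{D}^d>0$ forces $\widehat{\text{vol}}(\overline{\mathcal{D}})=\overline{\mathcal{D}}^{d+1}>0$ is simply false: the arithmetic self-intersection $\overline{\mathcal{D}}^{d+1}$ can vanish while the geometric intersection $\overline{D}^d$ is positive. In fact this is precisely the interesting regime in equidistribution problems---for example the Bogomolov/Szpiro setting, where one has a nef metrized bundle with zero height but big generic fiber. Your other argument, that smallness of $\{x_m\}$ ``presupposes'' $\hat{\mu}_+^{\pi}(\overline{\mathcal{D}})$ is defined and hence $\overline{\mathcal{D}}$ is big, does not hold either: the remark following the definition of smallness in the paper explains that for nef $\overline{\mathcal{D}}$ with $\overline{D}^d>0$ the relevant quantity reduces to the height $h_{\overline{\mathcal{D}}}(U)=\overline{\mathcal{D}}^{d+1}/((d+1)\overline{D}^d)$, which is well-defined without any bigness hypothesis. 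So Theorem \ref{theorem:equidistribution} is \emph{not} directly applicable to $\overline{\mathcal{D}}$, and the rest of your proof cannot be launched.

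The paper's proof resolves exactly this issue by a twisting argument. For any $r>0$, replace $\overline{\mathcal{D}}$ by the twist $\overline{\mathcal{D}}(r)=\overline{\mathcal{D}}+\mathcal{O}(r)$, which remains nef and now satisfies $\overline{\mathcal{D}}(r)^{d+1}=\overline{\mathcal{D}}^{d+1}+(d+1)r\,\overline{D}^d>0$, hence is big by the arithmetic Hilbert--Samuel theorem. One checks that the twist shifts both the height of each $x_m$ and the asymptotic positive slope by exactly $r$, so $\{x_m\}$ stays small with respect to $\overline{\mathcal{D}}(r)$; Theorem \ref{theorem:equidistribution} then applies to $\overline{\mathcal{D}}(r)$. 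Your remaining manipulations (Corollary \ref{corol:positiisusual} to replace positive intersections with ordinary ones, and \eqref{equa:crucial}) are then run on $\overline{\mathcal{D}}(r)$, together with the observation that $\overline{\mathcal{D}}(r)^d\cdot\mathcal{O}(g)=\overline{\mathcal{D}}^d\cdot\mathcal{O}(g)$ because both $\mathcal{O}(g)$ and $\mathcal{O}(r)$ are generically trivial. That twist is the essential missing idea in your argument.
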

\begin{proof}
    We have to show that for all $g\in C_c^0(U_v)$, we have 
    \[\lim_{m\to\infty}\int_{U_v} g\ d\eta_{x_m}\mapsto \frac{1}{D^d}\cdot\int_{U_v} g\ c_1(\overline{D})^d\]
    The trick is to deduce it from Theorem \ref{theorem:equidistribution} by twisting $\overline{D}$. Note that for any $r>0$, we have that the $r$-twist $\overline{D}(r)$ is again nef. Moreover since it is nef, we have $\widehat{\text{vol}}(\overline{D}(r))=\overline{D}(r)^{d+1}=\overline{D}^{d+1}+(d+1)rD^d>0$ from Theorem \cite[Theorem 5.2.2(1)]{yuan2021adelic} and since $D^d>0$ by hypothesis. Hence we deduce that the $r$-twist $\overline{D}(r)$ is big for all $r>0$. From the previous equality we can also deduce $\widehat{\text{vol}}(\overline{D}(r))=\widehat{\text{vol}}(\overline{D})+(d+1)r\cdot \text{vol}(D)\Rightarrow \hat{\mu}^{\pi}_+(\overline{D}(r))=\hat{\mu}^{\pi}_+(\overline{D})+r$. Moreover we have $h_{\overline{D}(r)}(x_m)=h_{\overline{D}}(x_m)+r$ and hence we deduce that the sequence $\{x_m\}$ is small with respect to $\overline{D}(r)$ as well. Then we can apply Theorem \ref{theorem:equidistribution} and deduce that the sequence of measures $\{d\eta_{x_m}\}$ converges weakly to the measure given by 
    \[g\in C_c^0(U_v)\mapsto \frac{\langle\overline{D}(r)^d\rangle\cdot \mathcal{O}(g)}{D^d}\]
    However note that since $\overline{D}(r)$ is already nef, by Corollary \ref{corol:positiisusual} we have that in this case the positive intersection products are the same as usual intersection products and hence
    \[\frac{\langle\overline{D}(r)^d\rangle\cdot \mathcal{O}(g)}{D^d}=\frac{\overline{D}(r)^d\cdot \mathcal{O}(g)}{D^d}\]
    However note that since $\mathcal{O}(g)$ is generically trivial and $\mathcal{O}(r)$ is also generically trivial with constant metric, we have that $\overline{D}(r)^d\cdot \mathcal{O}(g)=\overline{D}^d\cdot \mathcal{O}(g)$ which finishes the proof together with Equation \eqref{equa:crucial}.

\end{proof}
\begin{remark}
    We end this section by relating our equidistribution with an equidistribution result obtained by Berman and Boucksom. In \cite[Theorem D]{BermanBoucksom} Berman and Boucksom essentially show that for a smooth projective variety over $\mathbb{C}$ and a big line bundle on it endowed with a continuous metric denoted by $\overline{L}$, the Galois orbits of a generic sequence $\{x_m\}$ equidistribute to the \emph{equilibrium measure} whenever the height of the points with respect to $\overline{L}$ converge to the \emph{adelic energy at equilibrium}. They obtain it as a corollary of differentiability of \emph{energy at equilibrium} as in \cite[Theorem A]{BermanBoucksom}. Our equidistribution can be thought of as a generalisation of Berman-Boucksom equidistribution in the quasi-projective setting. However as in Chen's equidistribution (see \cite[Corollary 5.5]{chendiff}) we need an additional assumption of the divisor $\overline{D}$ to be arithmetically big compared to the more relaxed positivity assumptions of Berman-Boucksom.
\end{remark}

\section*{Contact}
Address:\ \emph{Debam Biswas,\ Department of Mathematics,\ University of Regensburg\\
Universitatstrasse 31, 93053, Regensburg}\\
Email:\ \href{mailto:debambiswas@gmail.com}{debambiswas@gmail.com}

\printbibliography

\end{document}